\title{ Mukai duality on K3 surfaces from the differential geometric perspective }
\author{Yang Li 	\thanks{Y.L. is supported by the Engineering and Physical Sciences Research Council [EP/L015234/1], the EPSRC Centre for Doctoral Training in Geometry and Number Theory (The London School of Geometry and Number Theory), University College London. The author is also funded by Imperial College London for his PhD studies. }}
\date{\today}
\newtheorem{thm}{Theorem}[section]
\newtheorem{lem}[thm]{Lemma}
\theoremstyle{definition}
\newtheorem{cor}[thm]{Corollary}
\newtheorem{rmk}{Remark}
\newtheorem{prop}[thm]{Proposition}
\newtheorem{Def}[thm]{Definition}
\newtheorem*{Acknowledgement}{Acknowledgment}
\newcommand{\ie}{\emph{i.e.} }
\newcommand{\cf}{\emph{cf.} }
\newcommand{\R}{\mathbb{R}}
\newcommand{\C}{\mathbb{C}}
\newcommand{\Z}{\mathbb{Z}}
\newcommand{\Q}{\mathbb{Q}}
\newcommand{\Lap}{\Delta}
\DeclareMathOperator{\Hom}{Hom}
\DeclareMathOperator{\End}{End}
\DeclareMathOperator{\Lie}{Lie}
\DeclareMathOperator{\Tr}{Tr}
\begin{document}
	\maketitle
	
\begin{abstract}
This paper treats the theory of Mukai duality on K3 surfaces from the differential geometric perspective,  taylored to the need of the author's companion paper about Mukai duality of adiabatic coassociative K3 fibrations.
\end{abstract}

\section{Introduction}

This paper gives a differential geometric account of  Mukai duality on K3 surfaces, intended as a background reference paper for  the author's companion paper \cite{myG2paper} dealing with Mukai duality of adiabatic coassociative K3 fibrations.

Mukai duality in the algebro-geometric setting has a well established literature \cite{Huy1}\cite{FourierMukai}\cite{Mukai}\cite{Mukai1}\cite{Mukai2}\cite{Mukai3}. The basic picture is that moduli spaces of stable vector bundles of certain topological types (specified by a Mukai vector $v(E)$) on a K3 surface $X$, are again K3 surfaces $X^\vee$, whose periods and symplectic forms can be described in terms of data on $X$. Morever, one can use the (quasi)-universal bundle to define the Fourier-Mukai transform (a.k.a.  Nahm transform), which under appropriate cohomology vanishing conditions  converts vector bundles on $X$ to vector bundles on $X^\vee$ and vice versa.

On the differential geometric side, the papers of Bartocci et al \cite{Bartocci1}\cite{Bartocci2} deal with the same subject and share some of the key intermediate results with this paper, although their technical emphasis is on twistor methods and reduction to complex geometry, while this paper primarily uses spinor techniques and can be more readily adapted to the $G_2$ setting \cite{myG2paper}. 
Our setting is also more general as we do not assume the Mukai vector to have zero degree.
 A major influence to this paper is the work of Braam and Baal \cite{BraamBaal} on the Nahm transform over a 4-torus.

The outline of this paper is as follows (for more details, see introductions to each Chapter):

 Chapter \ref{ModuliofvectorbundlesonK3} begins with a brief review of the algebro-geometric theory, and recalls the celebrated construction of the hyperk\"ahler structure on $X^\vee$. We then show carefully how to put an optimal connection $\nabla^{univ}$ on the universal bundle $\mathcal{E}\to X\times X^\vee$.

Chapter \ref{TheMukaidualofaK3surface} studies several aspects of the geometry of the Mukai dual K3 surface $X^\vee$: the hyperk\"ahler periods of $X^\vee$, the variation of Hermitian-Yang-Mills connections over $X^\vee$ parametrised by $X$, and the interpretation of the hyperk\"ahler structure on $X$ in terms of data on $X^\vee$.

Chapter \ref{TheNahmtransformonK3surfaces} studies the Nahm transform using differential geometric techniques, and is the more original part of this paper. The main result says

\begin{thm}(\cf Theorem \ref{NahmtransformperservesASD} and \ref{Fourierinversiontheorem})
Let $(\mathcal{F}, \alpha)$ be an irreducible HYM connection over a hyperK\"ahler K3 surface $X$, whose Mukai vector $v(\mathcal{F})$ shares the same slope as  $v(E)$ but $v(\mathcal{F})\neq v(E)$. Then the Nahm transform is well defined and produces a HYM connection $(\hat{\mathcal{F}}, \hat{\alpha})$ over $X^\vee$. Under further nonsingularity assumptions the inverse Nahm transform $(\hat{\hat{\mathcal{F}}}, \hat{\hat{\alpha}})$ of $(\hat{\mathcal{F}}, \hat{\alpha})$ is well defined and is canonically isomorphic to $(\mathcal{F},\alpha)$.

\end{thm}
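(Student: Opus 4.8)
The plan is to establish the theorem in two logically separate halves, mirroring the structure suggested by the reference to Theorems \ref{NahmtransformperservesASD} and \ref{Fourierinversiontheorem}.

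**Part I (the Nahm transform is well defined and produces a HYM connection).** The construction follows the standard Nahm/Fourier-Mukai recipe. Starting from an irreducible HYM connection $(\mathcal{F},\alpha)$ over $X$, I would form, for each point $\xi\in X^\vee$, the twisted Dirac (Dolbeault) operator coupled to $\mathcal{F}\otimes \mathcal{E}_\xi$, where $\mathcal{E}_\xi$ is the restriction of the universal bundle $\mathcal{E}\to X\times X^\vee$ to the slice $X\times\{\xi\}$. The fibre of the transformed bundle $\hat{\mathcal{F}}$ at $\xi$ is defined to be the kernel of the Dirac operator (equivalently $H^1$ of the coupled Dolbeault complex), and $\hat{\alpha}$ is the orthogonal-projection connection inherited from the trivial $L^2$ connection on the ambient family. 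The first technical point is the \emph{vanishing of the extreme cohomology groups} $H^0$ and $H^2$: since $v(\mathcal{F})$ and $v(E)$ share the same slope but are distinct, the relevant twisted bundles are stable with no common summand, so by the HYM/stability correspondence (Kobayashi--Hitchin) together with Serre duality the kernel and cokernel of the twisted Dirac operator vanish. This guarantees that $\hat{\mathcal{F}}$ has constant rank and so is a genuine smooth vector bundle, and that the index computation producing that rank is uniform in $\xi$. I would then verify the HYM condition for $\hat\alpha$: the curvature of the projected connection is computed via the standard formula involving the Green's operator of the coupled Laplacian, and the anti-self-duality (HYM with respect to the hyperk\"ahler family on $X^\vee$) follows because the construction is equivariant with respect to the full $\mathbb{S}^2$ of complex structures. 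This is the content I would attribute to Theorem \ref{NahmtransformperservesASD}.

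**Part II (Fourier inversion).** Here the goal is a canonical isomorphism $(\hat{\hat{\mathcal{F}}},\hat{\hat{\alpha}})\cong(\mathcal{F},\alpha)$. The cleanest approach is to produce the isomorphism directly on the level of the families of Dirac operators. I would assemble the full coupled Dirac operator on the product $X\times X^\vee$ (coupled to $\mathcal{F}$, to the universal bundle $\mathcal{E}$, and to its dual), and construct an explicit \emph{parametrix / Green's operator} for it. The double transform $\hat{\hat{\mathcal{F}}}_x$ is then a cohomology group of a complex built from this product operator restricted to $\{x\}\times X^\vee$, and the desired isomorphism arises from a natural evaluation map: a spinor in the double-transform fibre is paired against the harmonic representatives used in the first transform, producing an element of $\mathcal{F}_x$. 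The key algebraic input is an exact-sequence / spectral-sequence argument (the analytic analogue of the Fourier-Mukai inversion $\Phi\circ\hat\Phi\simeq \mathrm{id}[-n]$) that forces all but one term to vanish under the nonsingularity hypotheses; the surviving term is canonically $\mathcal{F}_x$. One then checks that this bundle isomorphism intertwines the connections by showing it is parallel, i.e.\ commutes with the respective projected covariant derivatives, which reduces to a commutation identity between the Green's operator and the covariant derivative on the product.

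**The main obstacle.** The hardest step is controlling the \emph{analysis on the product} $X\times X^\vee$ that underlies Part II: one must show that the product Dirac operator has no spurious kernel or cokernel, so that its Green's operator exists and behaves well under restriction to slices, and that the spectral-sequence degeneration producing the single surviving cohomology term is rigorous rather than merely formal. This is precisely where the ``further nonsingularity assumptions'' enter, and where the differences from the torus case of Braam--van Baal \cite{BraamBaal} are most delicate, since on a K3 the universal bundle is only quasi-universal and its curvature is nontrivial. Once the uniform invertibility of the restricted operators and the compatibility of Green's operators across the two factors are secured, the identification of connections is a comparatively formal consequence of the naturality of the whole construction under the hyperk\"ahler symmetries.
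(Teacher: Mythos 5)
Your setup for the first half (Dirac kernels as fibres, the $L^2$-projection connection, vanishing of $H^0$ and $H^2$ via stability and Serre duality) matches the paper's Section \ref{Nahmtransform}. The gap is in your justification of the HYM property: ``anti-self-duality follows because the construction is equivariant with respect to the full $\mathbb{S}^2$ of complex structures'' is an assertion, not an argument, and it is exactly the point where the K3 case is harder than the torus case of \cite{BraamBaal}. The curvature of $\hat\alpha$ contains the term $\langle G_\tau \Omega^t\cdot \psi^i_\tau, \wedge\, \Omega^t\cdot \psi^j_\tau\rangle$ (equation (\ref{curvaturematrixNahmtransform})), and because the K3 metric is not flat one cannot commute the Green operator past Clifford multiplication to exploit any symmetry. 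The paper's proof of Theorem \ref{NahmtransformperservesASD} resolves this with a specific device: the complex structures act on positive spinors by operators $I_k^{S^+}$ satisfying (\ref{complexstructureactiononpositivespin1})--(\ref{complexstructureactiononpositivespin2}), and since $G_\tau$ does not act on the spinor factor it commutes with $I_k^{S^+}$; combined with the triholomorphicity of $\Omega$ from Theorem \ref{universalconnectionmainproperties}, this is what makes the ASD verification go through. Equivariance of the construction by itself does not imply the curvature is of type $(1,1)$ for every complex structure; to salvage your route you would have to prove, say, that the $(0,1)$-part of $\hat\nabla$ defines a holomorphic structure for each member of the 2-sphere (the reduction-to-complex-geometry route of \cite{Bartocci1}\cite{Bartocci2}), which is a genuine additional argument you have not given.

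For the inversion theorem your plan --- a parametrix on the product, an evaluation map, and a ``spectral-sequence degeneration forcing all but one term to vanish'' --- correctly names the difficulty but does not supply the mechanism. In the differential-geometric setting there is no off-the-shelf spectral sequence; that tool belongs to the derived-category proof, which the paper explicitly contrasts itself with (see the remark after Theorem \ref{Fourierinversiontheorem} citing \cite{Bartocci2}). What the paper actually does is construct an explicit comparison map (\ref{comparisonmap}) out of Green operators and the curvature $\Omega$ acting on coupled spinors, verify that it satisfies the Dirac equation (Proposition \ref{comparisonmapDiracequation}), and then prove it is an isometry by rewriting the inner product on $\hat{\hat{\mathcal{F}}}$ as an integral against the Schwartz kernel of $G_\tau$ and evaluating the limit from the diagonal singularity $G_\tau(y,x)\sim \frac{1}{4\pi^2|y-x|^2}I$ of (\ref{asymptotesGreenfunction}): it is this blow-up that produces the delta-function-like localization recovering the fibre $\mathcal{F}|_x$, and the evaluation of the limit requires the identity $g^{\vee\vee}=g$ established in Section \ref{ASDconnectionsonMukaidual} (used in Lemma \ref{HermitianisometryLemma4}), together with the family index theorem to match ranks so that an injective isometry is an isomorphism. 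Identifying the connections then requires the same kernel asymptotics pushed one order higher (Section \ref{Comparingtheconnections}); it is not ``a comparatively formal consequence of naturality.'' None of this localization analysis appears in your proposal, and without it the map you describe is not shown to be well defined into $\hat{\hat{\mathcal{F}}}$, injective, surjective, or parallel.
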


\begin{rmk}
Here the slope condition ensures the Hom bundle carries ASD connections rather than just HYM, and $v(\mathcal{F})\neq v(E)$ means the Fourier-Mukai transform is a bundle, not a skyscraper sheaf.	
\end{rmk}

\begin{rmk}
Most results in the present paper will be applied to \cite{myG2paper}. For readers interested in \cite{myG2paper}, familiarity with Chapter \ref{ModuliofvectorbundlesonK3} and Section \ref{HyperkahlerperiodsontheMukaidualK3}, \ref{ASDconnectionsonMukaidual}, \ref{Nahmtransform} is essential, and knowledge of other parts is useful. 	
\end{rmk}

\begin{Acknowledgement}
The author thanks his PhD supervisor Simon Donaldosn and co-supervisor Mark Haskins for inspirations, and Simon Center for hospitality.

\end{Acknowledgement}

\section{Moduli of vector bundles on K3 surfaces}\label{ModuliofvectorbundlesonK3}

Moduli of vector bundles on K3 surfaces, and Fourier-Mukai transforms in particular, has been studied extensively in algebraic geometry in the language of coherent sheaves and derived categories. From the viewpoint of our applications, it seems preferable to interpret this theory in differential geometry in terms of bundles and ASD instantons; this has the advantage of not favoring any particular complex structure, and makes easier contact with metric geometry and $G_2$ instantons. The fundamental link between these two viewpoints is the Hitchin-Kobayashi correspondence.

Section \ref{Mukaidualreview} reviews some basic results about moduli spaces of stable vector bundles on K3 surfaces, which in some special cases turn out to be again K3 surfaces, called the Mukai dual. Section  \ref{hyperkahlerquotientreview} reviews the celebrated construction of the hyperk\"ahler metric on these moduli spaces, from a gauge theoretic viewpoint. These materials are standard, which we include in the hope of making the paper more accessible. Section \ref{universalconnection} continues with the gauge theoretic thread, and explain under some topological conditions, how to construct a universal connection on the universal bundle, which is compatible with all the 3 complex structures, and restricts to ASD connections on each K3 fibre. When part of the topological conditions fail, we show how to modify the construction to achieve a second best substitute.

\begin{rmk}
The construction of the universal connection is also an important step in the work of Bartocci et al \cite{Bartocci1}, but these authors applied a result which only works for semisimple structure groups, and missed out the topological issues arising from the centre of the $U(r)$ structure group, as we will discuss in Section \ref{universalconnection}.
\end{rmk}

\subsection{The Mukai dual of a K3 surface}\label{Mukaidualreview}

This Section gives a very brief review of the standard algebro-geometric theory, mainly following \cite{Huy1}, Section 6.1, which in turn is based on the works of Mukai.
Let $(X, \omega_1, \omega_2, \omega_3)$ be a \textbf{hyperk\"ahler K3 surface}. In this Section we consider the preferred complex structure $\omega_2+\sqrt{-1} \omega_3$. 

\begin{rmk}
The algebraic theory requires the projectivity assumption, \ie the K\"ahler class of $\omega_1$ is rational. This is not restrictive because the 2-sphere of complex structures always contain projective members.

\end{rmk}

It is convenient in the study of  bundles on $X$ to incorporate the topological data into the so called Mukai vector:

\begin{Def}
The \textbf{Mukai vector} of a coherent sheaf $E$ on a smooth variety $V$ is $v(E)=ch(E) \sqrt{Td(V)} \in H^{2 *}(V, \Q)$. More concretely, for $V=X$ the K3 surface, let $r=rk(E), c_1=c_1(E), c_2=c_2(E)$, then $v(E)=(r, c_1, \frac{1}{2}c_1^2-c_2+r)$.
\end{Def}
\begin{Def}
	If $v=\oplus v_i \in \bigoplus H^{2i}(V, \Z)$, then let $v^\vee=\oplus (-1)^i v_i$. We define the \textbf{Mukai pairing} to be the bilinear form on $H^{2*}(V,\Q)$ 
	\[
	(v,w)= -\int_V v^\vee \cup w.
	\]
\end{Def}

The importance of the Mukai vector comes from the Riemann-Roch formula:
\begin{lem}(\cf \cite{Huy1} Page 168)
If $E$, $F$ are coherent sheaves on $X$ then the Euler characteristic of the pair $(E,F)$ is 
\[
\chi(E, F)= \sum (-1)^i \dim \text{Ext}^i(E,F)=- (v(E), v(F) ).
\]
\end{lem}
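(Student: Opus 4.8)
The plan is to deduce the identity from the Hirzebruch--Riemann--Roch theorem and then perform a short computation in $H^{2*}(X,\Q)$ that is special to K3 surfaces. First I would put the left-hand side into Chern-character form. Since $\chi(E,F)$ is the Euler characteristic of the derived Hom complex $R\Hom(E,F)$, and on a smooth projective variety every coherent sheaf admits a finite locally free resolution, one reduces to the locally free case where $R\Hom(E,F)=E^\vee\otimes^{L} F$. Applying HRR for the Euler characteristic of a single sheaf then gives
\[
\chi(E,F)=\int_X ch(E^\vee)\, ch(F)\, Td(X).
\]
The extension from vector bundles to arbitrary coherent sheaves is legitimate because $ch$ is additive on short exact sequences and $\text{Ext}^i$ is computed from a locally free resolution, so both sides are additive in $E$ and in $F$.

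The next ingredient is the behaviour of the dualization $v\mapsto v^\vee$. I would record that $ch(E^\vee)=ch(E)^\vee$, i.e. the degree-$2i$ component of $ch(E)$ is multiplied by $(-1)^i$ under dualization; this follows from the fact that passing to $E^\vee$ negates the Chern roots, so that the degree-$2i$ part, being a symmetric function of degree $i$ in the roots, acquires the sign $(-1)^i$. The same sign rule shows that $v\mapsto v^\vee$ is a \emph{ring homomorphism} of $H^{2*}(X,\Q)$, since $(-1)^{i+j}=(-1)^i(-1)^j$ respects cup products of classes of degrees $2i$ and $2j$. I would then compute the Todd class: on a K3 surface $c_1(X)=0$ and $c_2(X)=24$, hence $Td(X)=1+2[\mathrm{pt}]$ and therefore $\sqrt{Td(X)}=1+[\mathrm{pt}]$, using $[\mathrm{pt}]^2=0$. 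Crucially, this class is supported in degrees $0$ and $4$, where $(-1)^i=+1$, so it is fixed by dualization: $(\sqrt{Td(X)})^\vee=\sqrt{Td(X)}$.

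Finally I would combine these observations. By definition $v(E)=ch(E)\sqrt{Td(X)}$, and since dualization is a ring homomorphism fixing $\sqrt{Td(X)}$ we get $v(E)^\vee=ch(E)^\vee\sqrt{Td(X)}=ch(E^\vee)\sqrt{Td(X)}$. Substituting into the Mukai pairing and using $\sqrt{Td(X)}\cdot\sqrt{Td(X)}=Td(X)$ yields
\[
(v(E),v(F))=-\int_X v(E)^\vee\cup v(F)=-\int_X ch(E^\vee)\, ch(F)\, Td(X)=-\chi(E,F),
\]
which is the claimed formula. The genuinely delicate point is not the algebra, which is forced once the signs are pinned down, but the justification of the general HRR form $\chi(E,F)=\int_X ch(E^\vee)ch(F)Td(X)$ for arbitrary coherent sheaves rather than bundles; I expect this reduction (via locally free resolutions and additivity of $ch$, or equivalently an appeal to Grothendieck--Riemann--Roch) to be the main obstacle, together with keeping the orientation and sign conventions in the pairing consistent throughout.
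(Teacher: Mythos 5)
Your proof is correct, and it is essentially the standard argument of the cited source (Huybrechts--Lehn): Hirzebruch--Riemann--Roch via locally free resolutions, combined with the observation that $\sqrt{Td(X)}=1+[\mathrm{pt}]$ is fixed by dualization so that the two square roots recombine into $Td(X)$ inside the Mukai pairing. The paper itself offers no proof, deferring entirely to that reference, so your write-up supplies exactly the intended argument with the signs and the K3-specific Todd computation correctly handled.
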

We consider the (coarse) moduli space $\mathcal{M}(v)=M(r,c_1,c_2)$ of semistable sheaves with a given Mukai vector $v$, and we denote the locus of stable vector bundles inside by $\mathcal{M}^s(v)$.
Now if $E$ is a stable vector bundle, then by Serre duality $\text{H}^0(\End_0(E))=\text{H}^2(\End_0(E))=0$, so the moduli space is smooth at the point $E$ with dimension $(v,v)+2$.

A feature of the theory is that low dimensional moduli spaces are easier to understand.

\begin{thm}\label{Mukaidualalgebraic}
(\cf \cite{Huy1} page 169) Assume $(v,v)=0$. If $\mathcal{M}^s(v)$ has a compact irreducible component, then  $\mathcal{M}^s(v)$ is equal to this component.
\end{thm}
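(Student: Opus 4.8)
The plan is to exploit the smoothness of the stable locus to turn the compact component into an open-and-closed subset of the projective moduli space, and then invoke a connectedness statement.

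First I would record that the hypothesis $(v,v)=0$ forces $\mathcal{M}^s(v)$ to be smooth of pure dimension $2$. Indeed, every point of $\mathcal{M}^s(v)$ is by definition a stable vector bundle $E$, and the computation recalled above (via Serre duality $H^0(\End_0 E)=H^2(\End_0 E)=0$) shows the moduli space is smooth at $E$ of dimension $(v,v)+2=2$. Thus $\mathcal{M}^s(v)$ is a smooth quasi-projective surface, sitting as a Zariski-open subset of the projective coarse moduli space $\mathcal{M}(v)$ of semistable sheaves, by openness of both the stability condition and of local-freeness.

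Next I would use that on a smooth variety the irreducible components are mutually disjoint and each is open and closed: distinct components cannot meet, for at a common point the local ring would fail to be a domain, contradicting regularity. Hence the irreducible components of $\mathcal{M}^s(v)$ coincide with its connected components. Let $Z$ denote the given compact irreducible component. Then $Z$ is open in $\mathcal{M}^s(v)$, and therefore open in $\mathcal{M}(v)$ since $\mathcal{M}^s(v)$ is open in $\mathcal{M}(v)$; being compact, $Z$ is also closed in $\mathcal{M}(v)$. Consequently $Z$ is an open-and-closed irreducible subset of the projective variety $\mathcal{M}(v)$, \ie a connected component of $\mathcal{M}(v)$.

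Finally I would conclude by connectedness of $\mathcal{M}(v)$: since $Z$ is a connected component of a connected space, $Z=\mathcal{M}(v)$, whence the chain $Z\subseteq \mathcal{M}^s(v)\subseteq \mathcal{M}(v)=Z$ forces $\mathcal{M}^s(v)=Z$, as claimed. The substantive input — and the step I expect to be the real obstacle — is precisely the connectedness of $\mathcal{M}(v)$ (equivalently, ruling out the existence of any further component of $\mathcal{M}^s(v)$): this is a nontrivial theorem for moduli of sheaves on K3 surfaces, which I would cite from the algebro-geometric literature rather than reprove, since everything else is soft topology once smoothness and the dimension count are in place. As a consistency remark I would also note that $Z$, being a smooth compact surface carrying the nondegenerate Mukai symplectic form, automatically has trivial canonical bundle and is therefore itself a K3 (or abelian) surface.
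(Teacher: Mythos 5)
Your soft topology is all correct: smoothness of $\mathcal{M}^s(v)$ in pure dimension $2$, openness of $\mathcal{M}^s(v)$ in $\mathcal{M}(v)$, and the conclusion that the compact component $Z$ is open and closed in $\mathcal{M}(v)$, hence a connected component of it. But there is a genuine gap at exactly the step you flag: the connectedness of $\mathcal{M}(v)$ is not an independent result you can import. As you note yourself, it is ``equivalently, ruling out the existence of any further component'' --- that is, it \emph{is} the theorem, up to the soft part you have carried out. In the generality required here (arbitrary isotropic $v$, arbitrary polarisation, no primitivity assumption), the irreducibility and connectedness statements available in the literature either assume $v$ primitive with generic polarisation, or are themselves established by the very argument that proves the present theorem; so the citation you would need does not exist independently of the statement being proven, and the argument is circular.

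For comparison: the paper does not prove this theorem either --- it quotes it from \cite{Huy1} --- and the proof there (due to Mukai) needs no connectedness input. Suppose $F$ is a stable sheaf with $v(F)=v$ and $[F]\notin Z$. For every $[E]\in Z$, stability of both sheaves together with $E\not\simeq F$ gives $\Hom(E,F)=0$ and $\Hom(F,E)=0$, and Serre duality then gives $\text{Ext}^2(E,F)\simeq \Hom(F,E)^*=0$; since $\chi(E,F)=-(v,v)=0$ by the Riemann--Roch lemma, also $\text{Ext}^1(E,F)=0$. Hence, by base change, the sheaf-theoretic Fourier--Mukai transform of $F$ with kernel the (quasi-)universal family on $X\times Z$ vanishes in every degree, so by Grothendieck--Riemann--Roch $FM(v)=FM(v(F))=0$. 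But the cohomological transform is invertible (\cf Theorem \ref{FourierMukaitransformoncohomologymainperoperties}; indeed $FM(v)=[X^\vee]^*\neq 0$), so $FM(v)\neq 0$ --- a contradiction. This Ext-vanishing/Riemann--Roch mechanism is the substantive content that your appeal to connectedness silently replaces, and it is what would have to be supplied to make your outline into a proof.
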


\begin{Def}\label{Mukaidualdefinition}
Assume $(v,v)=0$, the moduli space $\mathcal{M}^s(v)$ is compact and nonempty. Then we call $X^\vee=\mathcal{M}^s(v)$ the \textbf{Mukai dual} of $X$.
\end{Def}

An important tool is Fourier-Mukai transform (FM) on cohomology. This is easiest to explain when the universal bundle $\mathcal{E}\to X\times X^\vee$ exists, but even if it does not, FM can still be defined via the `quasi-universal family' which always exists (\cf Section 4.6 \cite{Huy1}).
$FM$ can be thought of as the analogue of the Fourier transform and the inverse Fourier transform.

\begin{Def}\label{DefinitionofFourierMukai}
The \textbf{Fourier-Mukai transform on cohomology} is a pair of maps between the cohomologies of $X$ and $X^\vee$, denoted  $FM: H^*(X, \Q)\to H^*(X^\vee, \Q)$ and $FM^\vee: H^*(X^\vee, \Q)\to H^*(X, \Q)$. Let $pr_X: X\times X^\vee\to X$, $pr_{X^\vee}: X\times X^\vee\to X^\vee$ be the two projections, and $v(\mathcal{E})$ be the Mukai vector of $\mathcal{E}$ on $X\times X^\vee$. Then 
\[
FM(\alpha)=pr_{X^\vee_*}( v(\mathcal{E})^\vee \cup pr_{X}^*\alpha  ), \quad FM^\vee(\alpha')=pr_{X_*}( v(\mathcal{E}) \cup pr_{X^\vee}^*\alpha'  ).
\]
\end{Def}
\begin{rmk}\label{FourierMukaicohomology}
The formula is motivated by compatibility with the Fourier-Mukai transform on bundles/ sheaves/ derived category/ K-theory, which also involves this kind of convolution operations.  The basic idea is to start from any bundle/ coherent sheaf/complex of sheaves $\mathcal{F}\to X$, and the Fourier-Mukai transform will output the complex $R^*pr{X^\vee_*}( \mathcal{E}^\vee \otimes pr_{X}^*\mathcal{F}  )   $. The alternating sum 
\[
[R^0pr_{X^\vee_*}( \mathcal{E}^\vee \otimes pr_{X}^*\mathcal{F}  ) ]-[R^1pr_{X^\vee_*}( \mathcal{E}^\vee \otimes pr_{X}^*\mathcal{F}  ) ]+[R^2pr_{X^\vee_*}( \mathcal{E}^\vee \otimes pr_{X}^*\mathcal{F}  ) ]
\]
defines a K-theory class, and therefore its Chern character defines a cohomology class. The Mukai vector on $X^\vee$ constructed from this Chern character is related to the Mukai vector of $\mathcal{F}$ by the Fourier-Mukai transform on cohomology (\cf Corollary 5.29 in \cite{FourierMukai}). In many interesting situations, $R^0$ and $R^2$ vanish, and then the Fourier-Mukai transform on cohomology can be used to work out the Chern character of the $R^1$ term, which in good situations is a bundle over $X^\vee$.

\end{rmk}

\begin{Def}
The natural weight two Hodge structure on the even degree cohomology $H^{even}(Y)$ of a compact complex surface $Y$, is given by prescribing $H^{even 2,0}(Y)= H^{2,0}(Y)$, $H^{even 0,2}(Y)= H^{0,2}(Y) $, and $H^{even 1,1}(Y)=H^0(Y)\bigoplus H^{1,1}(Y) \bigoplus H^4(Y)$.
\end{Def}

An important characterisation of the complex structure of $X^\vee$ is
\begin{thm}\label{FourierMukaitransformoncohomologymainperoperties}
(\cf \cite{Huy1}, Section 6.1) The Mukai dual $X^\vee$ is also a K3 surface. The Fourier-Mukai transform pair  $FM$ and $FM^\vee$ are inverse to each other, define an isomorphism of weight two Hodge structures $H^*(X) \simeq H^*(X^\vee)$, preserve the Mukai pairing, and are muturally adjoint with respect to the Mukai pairing. 
\end{thm}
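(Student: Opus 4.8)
The plan is to reduce every assertion to two inputs: Mukai's deformation-theoretic description of the moduli space $X^\vee=\mathcal{M}^s(v)$, and the behaviour of the kernel class $v(\mathcal{E})\in H^{2*}(X\times X^\vee,\Q)$ under convolution. The second input is best understood as the cohomological shadow, obtained through Grothendieck--Riemann--Roch, of the corresponding statements for the derived Fourier--Mukai functor; the normalisation by $\sqrt{Td}$ in the Mukai vector is precisely what makes the resulting cohomological identities free of Todd-class corrections.

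I would first show that $X^\vee$ is a K3 surface. By Definition \ref{Mukaidualdefinition} it is compact and nonempty, by Theorem \ref{Mukaidualalgebraic} irreducible, and by the smoothness computation preceding that theorem it is a smooth complex surface of dimension $(v,v)+2=2$, with tangent space $\text{Ext}^1(E,E)$ at a point $[E]$. The key structure is Mukai's holomorphic symplectic form, given by Yoneda product followed by trace,
\[
\text{Ext}^1(E,E)\otimes\text{Ext}^1(E,E)\longrightarrow\text{Ext}^2(E,E)\xrightarrow{\ \Tr\ }H^2(X,\mathcal{O}_X)\cong\C,
\]
which is skew-symmetric by graded-commutativity of the Yoneda product and nondegenerate by Serre duality. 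This produces a nowhere-vanishing holomorphic $2$-form, so $K_{X^\vee}$ is trivial and $X^\vee$ is either a K3 surface or a complex torus. The even Betti numbers decide between them: once the cohomological transform below is known to be an isomorphism, $\dim_\Q H^{even}(X^\vee)=\dim_\Q H^{even}(X)=24$, whence $b_2(X^\vee)=22$, which together with the trivial canonical bundle forces $X^\vee$ to be a K3 surface (a torus has $b_2=6$).

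The softer properties of $FM$ and $FM^\vee$ I would argue directly from $v(\mathcal{E})$. Mutual adjointness $(FM\alpha,\beta')_{X^\vee}=(\alpha,FM^\vee\beta')_X$ follows by unfolding both sides with the projection formula: each reduces to a single integral over $X\times X^\vee$ pairing $\alpha$ and $\beta'$ against $v(\mathcal{E})$, and the two agree after accounting for the dualization in the definition of the Mukai pairing. The Hodge-structure statement is equally formal: every K\"unneth component of $v(\mathcal{E})$ is algebraic, hence of type $(p,p)$, so pullback, cup product with $v(\mathcal{E})$, and pushforward are morphisms of Hodge structures of the appropriate weights; in particular the line $H^{2,0}(X)$ is sent to $H^{2,0}(X^\vee)$, giving the claimed isometry of weight-two Hodge structures.

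The crux is the inverse property $FM^\vee\circ FM=\mathrm{id}$. Evaluating the composition on the triple product $X\times X^\vee\times X$ by base change and the projection formula reduces it to the identity that the convolution of $v(\mathcal{E})^\vee$ with $v(\mathcal{E})$ along $X^\vee$ equals the class of the diagonal $\Delta_X$. This is the cohomological image of the kernel isomorphism $\mathcal{E}*\mathcal{E}^\vee\cong\mathcal{O}_{\Delta_X}[-2]$ underlying Mukai's theorem that the derived functor with kernel $\mathcal{E}$ is an equivalence. I expect this to be the main obstacle, since it rests on the off-diagonal vanishing $\text{Ext}^*(E_x,E_y)=0$ for distinct points of $X^\vee$, which in turn uses stability together with the hypothesis $(v,v)=0$. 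Once the inverse property is available, preservation of the Mukai pairing is immediate from adjointness,
\[
(FM\alpha,FM\beta)_{X^\vee}=(\alpha,FM^\vee FM\beta)_X=(\alpha,\beta)_X,
\]
and the even-Betti-number input used above to identify $X^\vee$ as a K3 surface is likewise supplied.
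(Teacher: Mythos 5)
Your proposal is correct in outline, but note what it is being compared against: the paper does not prove this theorem at all. It is quoted from \cite{Huy1}, Section 6.1, and the only argument the paper records is the remark closing Section \ref{Mukaidualreview}: $X^\vee$ carries a hyperk\"ahler structure (constructed gauge-theoretically in Section \ref{hyperkahlerquotientreview}), the classification of surfaces then leaves only K3 and abelian surfaces, cohomological information extracted from $FM$ excludes the abelian case, and Torelli pins down the complex structure. Your route reaches the same intermediate statements by purely algebraic means: you replace the hyperk\"ahler structure by Mukai's holomorphic symplectic form (Yoneda product, trace, Serre duality) to trivialize $K_{X^\vee}$, and you obtain the inverse property --- which the paper never touches --- from Mukai's derived equivalence via kernel convolution and Grothendieck--Riemann--Roch, then use it both to rule out the torus by counting $\dim_{\Q} H^{even}$ and to deduce preservation of the Mukai pairing formally from adjointness. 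What this buys is that all the soft statements (adjointness, Hodge compatibility, pairing preservation) become kernel manipulations with $v(\mathcal{E})$, at the cost of invoking the full derived equivalence, whose proof (the fully-faithfulness criterion applied to the point objects, resting on the off-diagonal vanishing $\mathrm{Ext}^*(E_x,E_y)=0$ from stability and $(v,v)=0$) you correctly identify as the real content but do not carry out; since the paper itself defers the entire theorem to the literature, this is a fair stopping point. Your ordering also avoids a potential circularity: triviality of $K_{X^\vee}$ is needed before the Serre-functor step of the equivalence criterion, and you establish it first.

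Three repairs are needed to make the sketch sound. First, ``every K\"unneth component of $v(\mathcal{E})$ is algebraic'' is an overclaim (algebraicity of K\"unneth components is open in general); what you need, and what is true, is that $v(\mathcal{E})$ is a sum of $(p,p)$-classes and that K\"unneth projection is a morphism of Hodge structures, so each component is again of type $(p,p)$. Second, trivial canonical bundle alone also allows primary Kodaira surfaces; you must use that $X^\vee$ is projective (it is, by the GIT construction of the moduli space) or at least K\"ahler to reduce to the K3-versus-torus dichotomy. Third, your argument presumes the existence of a universal sheaf $\mathcal{E}$; in general only a quasi-universal family exists, and both the derived equivalence and the cohomological transform must be set up with it, as in \cite{Huy1} --- a point the paper itself flags in Definition \ref{DefinitionofFourierMukai}.
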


\begin{thm}(\cite{Huy1}, Proposition 6.1.14) 
The Fourier-Mukai transform $FM$ sends the orthogonal complement of $v$ in $H^*(X)$ to $H^2(X^\vee)\bigoplus H^4(X^\vee)$, and sends the isotropic vector $v$ to the fundamental class $[X^\vee]^*\in H^4(X^\vee)$, so induces a linear isometry $H^2(X^\vee, \R)\simeq v^\perp/\R v.$
If $v$ is furthermore primitive, \ie not divisible by any nontrivial integer, then $FM$ induces an isomorphism of Hodge structures $H^2(X^\vee, \Z)\simeq v^\perp /v\Z$.	
\end{thm}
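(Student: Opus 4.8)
The plan is to derive everything from the preceding Theorem \ref{FourierMukaitransformoncohomologymainperoperties}, which already supplies that $FM$ and $FM^\vee$ are mutually inverse isometries of the Mukai pairing and isomorphisms of the weight two Hodge structures. The only genuinely new input needed is the single identification $FM(v) = [X^\vee]^*$; once this is in hand, the remaining assertions are formal manipulations with the Mukai pairing.

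First I would establish $FM(v) = [X^\vee]^*$, arguing via the inverse transform rather than a direct push-forward. By the defining property of the (quasi-)universal sheaf, the restriction $\mathcal{E}|_{X \times \{y_0\}}$ is the stable sheaf $E_{y_0}$ parametrised by $y_0 \in X^\vee = \mathcal{M}^s(v)$, whose Mukai vector is $v$. Feeding the point class $[X^\vee]^* \in H^4(X^\vee)$ (which equals $v(k(y_0))$ for a skyscraper $k(y_0)$, since $ch(k(y_0))$ sits in top degree) into the formula $FM^\vee(\alpha')=pr_{X*}(v(\mathcal{E}) \cup pr_{X^\vee}^*\alpha')$, cupping with $pr_{X^\vee}^*[X^\vee]^*$ restricts $v(\mathcal{E})$ to the fibre $X \times \{y_0\}$, and push-forward along the isomorphism $pr_X|_{X \times \{y_0\}}$ returns $v(\mathcal{E}|_{X \times \{y_0\}}) = v(E_{y_0}) = v$. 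Thus $FM^\vee([X^\vee]^*) = v$, and inverting gives $FM(v) = [X^\vee]^*$. The compatibility used here between the cohomological transform and the sheaf-level transform is exactly the Grothendieck--Riemann--Roch identity behind Remark \ref{FourierMukaicohomology}; the step I expect to require the most care is the Todd-class bookkeeping on the fourfold $X \times X^\vee$, verifying that restriction to a fibre commutes with forming the Mukai vector so that no spurious factors survive.

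With $FM(v) = [X^\vee]^*$ secured, the image of $v^\perp$ follows from the isometry property, since a bijective isometry carries orthogonal complements to orthogonal complements: $FM(v^\perp) = FM(v)^\perp = ([X^\vee]^*)^\perp$. A direct evaluation of the Mukai pairing shows that for $w=(a,b,c) \in H^0 \oplus H^2 \oplus H^4(X^\vee)$ one has $([X^\vee]^*,w) = -a$, whence $([X^\vee]^*)^\perp = H^2(X^\vee) \oplus H^4(X^\vee)$, giving the first assertion. Because $(v,v)=0$ we have $\R v \subset v^\perp$, and $FM(\R v) = \R[X^\vee]^* = H^4(X^\vee,\R)$; therefore $FM$ descends to an isomorphism $v^\perp/\R v \xrightarrow{\sim} (H^2 \oplus H^4)(X^\vee,\R)/H^4(X^\vee,\R) \cong H^2(X^\vee,\R)$. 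That this is an \emph{isometry} is immediate once I note that the Mukai pairing descends to both quotients, since $v$ (resp. $[X^\vee]^*$) lies in the radical of the pairing restricted to $v^\perp$ (resp. to $H^2 \oplus H^4$), and that on $H^2(X^\vee)$ the descended pairing is precisely the cup-product intersection form.

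For the integral, primitive case the argument is identical modulo one extra ingredient: I would invoke Mukai's refinement that $FM$ restricts to an isometry of the integral Mukai lattices $H^*(X,\Z) \cong H^*(X^\vee,\Z)$, compatible with the Hodge isomorphism of Theorem \ref{FourierMukaitransformoncohomologymainperoperties}. Since $[X^\vee]^*$ generates $H^4(X^\vee,\Z)$ it is primitive, so primitivity of $v$ is preserved, and $FM$ carries the saturated sublattice $v\Z \subset v^\perp_\Z$ isomorphically onto $H^4(X^\vee,\Z) \subset (H^2 \oplus H^4)(X^\vee,\Z)$. Passing to quotients yields $v^\perp_\Z/v\Z \cong H^2(X^\vee,\Z)$ as lattices, and since $FM$ is a Hodge isomorphism and $v$ is of type $(1,1)$ (its components lie in $H^0$, $H^{1,1}$ and $H^4$, so $\R v$ contributes nothing to the $(2,0)$ and $(0,2)$ parts), the induced map respects the weight two Hodge structures. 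The main obstacle throughout is the single computation $FM(v) = [X^\vee]^*$ together with the integrality input; once these are granted, the rest is bookkeeping with the Mukai pairing.
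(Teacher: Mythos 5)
Note first that the paper does not prove this theorem at all: it is quoted directly from \cite{Huy1}, Proposition 6.1.14, and the only in-paper commentary is the remark that follows it (the hyperk\"ahler structure on $X^\vee$, the classification of surfaces, and Torelli). So your proposal can only be measured against the literature, not against an argument in the paper. On the rational/real level your derivation is correct, and it is in the spirit of computations the paper genuinely does elsewhere: your identity $FM^\vee([X^\vee]^*)=v$ is the mirror image of $FM([X]^*)=v(\mathcal{E}^\vee|_x)$, which appears in the proof of Theorem \ref{Mukaidualhyperkahlertriple} and again in Section \ref{ASDconnectionsonMukaidual}. One simplification: this step needs no Grothendieck--Riemann--Roch at all. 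Cupping with $pr_{X^\vee}^*[X^\vee]^*$ and pushing forward along $pr_X$ is literally restriction to the fibre $X\times\{y_0\}$, and $\sqrt{Td(X\times X^\vee)}$ restricts there to $\sqrt{Td(X)}$, so $FM^\vee([X^\vee]^*)=v(\mathcal{E}|_{X\times\{y_0\}})=v$ follows from the defining property of the universal family and functoriality of the Chern character alone. The pairing bookkeeping (that $([X^\vee]^*,(a,b,c))=-a$, that $v$ and $[X^\vee]^*$ lie in the radicals of the restricted pairings, and that the descended pairing on $H^2(X^\vee)$ is the cup product) is also correct.

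The genuine gap is in the integral, primitive case. You ``invoke Mukai's refinement'' that $FM$ restricts to an isometry of integral lattices $H^*(X,\Z)\simeq H^*(X^\vee,\Z)$. Nothing available to you provides this: Definition \ref{DefinitionofFourierMukai} defines $FM$ only over $\Q$, and Theorem \ref{FourierMukaitransformoncohomologymainperoperties} asserts no integrality. Worse, this invocation begs the question, since the integrality of the induced map $v^\perp/\Z v\to H^2(X^\vee,\Z)$ is precisely the content of \cite{Huy1}, Proposition 6.1.14. In the generality of the statement no holomorphic universal sheaf need exist (the paper itself remarks after Theorem \ref{universalconnectionmainproperties} that the algebro-geometric universal bundle can fail to exist); one then only has a quasi-universal family of some similitude $\rho$, the cohomological transform is a priori only rational, and proving that its $H^2$-component is integral on $v^\perp$ is exactly where primitivity of $v$ and the real work enter. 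A concrete symptom that your assumed input is too strong: if $FM$ were an integral lattice isometry with $FM(v)=[X^\vee]^*$, then $v$ would automatically be primitive (the preimage of a primitive vector under a lattice isomorphism is primitive), so the theorem's primitivity hypothesis would be vacuous. Even in the fine-moduli case, the input you invoke itself requires proof --- holomorphicity of $\mathcal{E}$, GRR as in Remark \ref{FourierMukaicohomology}, and surjectivity of the Mukai-vector map $K(X)\to H^{2*}(X,\Z)$ --- none of which your sketch supplies. The Hodge-theoretic reasoning at the end (that $v$ is of type $(1,1)$ so the quotient Hodge structures match) is fine, but its integral conclusion inherits the same gap.
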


\begin{rmk}
A crucial fact in the proof is that $X^\vee$ admits a hyperk\"ahler structure (see next Section), which by the classification of surfaces leaves only the possibility of K3 surfaces and Abelian surfaces. One uses further information from the cohomology of $X^\vee$ obtained by studying the transform $FM$ to show it is K3. The theorem characterises the period data of the complex structure of $X^\vee$, so by the Torelli theorem determines the complex structure.
\end{rmk}

\subsection{Hyperk\"ahler structure on moduli of vector bundles}\label{hyperkahlerquotientreview}

Fix a Hermitian vector bundle $E$ with Mukai vector $v$. The Hitchin-Kobayashi correspondence allows for comparison between the moduli space $\mathcal{M}^s(v)$ of stable holomorphic vector bundles structures on $E$, and Hermitian-Yang-Mills (HYM) connections $A$. We shall take a viewpoint where \emph{no complex structure is preferred}.

Since K3 surfaces have no torsion cohomology, any $PU(r)$-bundle lifts topologically to a $U(r)$-bundle. HYM connections are equivalent to saying the associated $PU(r)$-connections are ASD, and the central curvature satisfies
$
\frac{\sqrt{-1}}{2\pi r}\Tr F_A= \mathcal{B},
$
where $\mathcal{B}$ is the harmonic representative of $\frac{1}{r}c_1(E)$.

 Hitchin's hyperk\"ahler quotient leads to the following celebrated result:

\begin{thm}(see \cite{Mukai})\label{hyperkahlerstructureonmodulitheorem}
On $\mathcal{M}^s(v)$, there is a canonical hyperk\"ahler structure.
\end{thm}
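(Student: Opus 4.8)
The plan is to realise $\mathcal{M}^s(v)$ as an infinite-dimensional hyperk\"ahler quotient in the sense of Hitchin--Karlhede--Lindstr\"om--Ro\v{c}ek, following the gauge-theoretic viewpoint indicated above in which no complex structure is preferred. First I would fix the Hermitian bundle $E$ and consider the affine space $\mathcal{A}$ of unitary connections on $E$ inducing a fixed connection on $\det E$ with central curvature $\mathcal{B}$, modelled on $\Omega^1(X, \mathfrak{su}(E))$. Because $X$ is hyperk\"ahler and four-dimensional, the identification $T^*X \cong TX$ carries the quaternionic action of the three complex structures $I_1, I_2, I_3$ dual to $\omega_1, \omega_2, \omega_3$; these act on the cotangent directions of $\mathcal{A}$ and, together with the $L^2$ inner product, endow $\mathcal{A}$ with a flat hyperk\"ahler structure whose three parallel complex structures and associated (constant-coefficient, hence closed) K\"ahler forms I would write down explicitly.

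Second, the unitary gauge group $\mathcal{G}$ acts on $\mathcal{A}$ preserving all three complex structures and the metric. I would compute the hyperk\"ahler moment map $\mu = (\mu_1, \mu_2, \mu_3)$ for this action and show that, up to the constant determined by $\mathcal{B}$, each $\mu_k$ is the contraction $\Lambda_{\omega_k} F_A$ of the curvature against the $k$-th K\"ahler form; equivalently $\mu$ packages the self-dual part $F_A^+$ of the curvature, using that the bundle $\Lambda^+ T^*X$ of self-dual two-forms is framed by $\omega_1, \omega_2, \omega_3$. Setting $\mu$ equal to the appropriate central constant is then exactly the condition that the induced $PU(r)$-connection be ASD with the prescribed central curvature, i.e. the HYM equation recalled above.

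Third, I would appeal to the Hitchin--Kobayashi correspondence (already invoked in this Section) to identify the level set $\mu^{-1}(c)$ modulo $\mathcal{G}$ with $\mathcal{M}^s(v)$. The general hyperk\"ahler quotient theorem then produces a hyperk\"ahler structure on the smooth part of this quotient; smoothness at a stable bundle $E$ is guaranteed by the vanishing $\text{H}^0(\End_0(E)) = \text{H}^2(\End_0(E)) = 0$ recalled above, which simultaneously tells us that $\mathcal{G}$ acts with only finite (central) stabiliser and that the deformation complex has cohomology concentrated in degree one. Concretely, the tangent space $T_{[A]}\mathcal{M}^s(v)$ is then represented by harmonic $\mathfrak{su}(E)$-valued one-forms, on which $I_1, I_2, I_3$ restrict to give the three complex structures of the quotient.

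The main obstacle is the analytic rigour of the infinite-dimensional quotient rather than any formal identity: one must pass to suitable Sobolev completions of $\mathcal{A}$ and $\mathcal{G}$, establish a gauge-fixing slice (Coulomb gauge) so that the quotient near an irreducible HYM connection is genuinely a finite-dimensional smooth manifold, and check that the three complex structures descend to parallel, integrable structures with closed K\"ahler forms. Once this finite-dimensional moment-map bookkeeping is in place, integrability and the hyperk\"ahler identities follow formally from the quotient theorem. The subtle point specific to the $U(r)$ rather than $SU(r)$ setting is the correct treatment of the $U(1)$ central part of the curvature, which forces the nonzero central value $\mathcal{B}$ in the moment map and is precisely the feature the metric-geometric, complex-structure-free framing is designed to keep track of.
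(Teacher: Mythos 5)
Your proposal is correct and follows essentially the same route as the paper: Hitchin's infinite-dimensional hyperk\"ahler quotient of the flat hyperk\"ahler affine space of connections by the gauge group, with moment map given by the self-dual curvature components $F_A\wedge\omega_k$ and the ASD/HYM locus as the level set, smoothness and freeness coming from irreducibility/stability. The only cosmetic difference is that you work with fixed-determinant $U(r)$ connections and a central moment-map value, while the paper passes to $PU(r)$ connections and takes the zero level set, which are equivalent ways of handling the centre.
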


\begin{cor}
The Mukai dual $X^\vee$ has a natural \textbf{hyperk\"ahler structure}.
\end{cor}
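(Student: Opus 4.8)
The plan is to specialise Theorem \ref{hyperkahlerstructureonmodulitheorem} to the case at hand. By Definition \ref{Mukaidualdefinition}, the Mukai dual is by construction the moduli space $X^\vee = \mathcal{M}^s(v)$, formed under the hypotheses that $(v,v)=0$ and that $\mathcal{M}^s(v)$ is compact and nonempty. Since Theorem \ref{hyperkahlerstructureonmodulitheorem} endows \emph{any} such moduli space of stable bundles with a canonical hyperk\"ahler structure arising from Hitchin's hyperk\"ahler quotient, it suffices to check that the hypotheses of that theorem are met, that the resulting structure is genuinely nondegenerate on all of $X^\vee$, and that it is natural in the sense of being independent of auxiliary choices. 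Naturality is inherited directly from the canonicity of the quotient construction, so the only thing demanding attention is smoothness and global definedness.

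First I would record that, as already observed in the text, stability together with Serre duality forces $\text{H}^0(\End_0(E)) = \text{H}^2(\End_0(E)) = 0$ at every point $[E] \in \mathcal{M}^s(v)$, so the moduli space is smooth of complex dimension $(v,v)+2$. With the isotropy assumption $(v,v)=0$ this dimension equals $2$, i.e. real dimension $4$, which is precisely the dimension on which a hyperk\"ahler structure (equivalently, for a surface, a Ricci-flat K\"ahler structure) lives. Thus $X^\vee$ is a smooth compact $4$-manifold carrying the three compatible complex structures and parallel symplectic forms supplied by the quotient.

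The one point requiring care — and the only real content beyond quoting the theorem — is that the hyperk\"ahler quotient yields a smooth structure on the locus of \emph{stable} bundles, whereas a priori the compactification $\mathcal{M}(v)$ could acquire strictly semistable boundary points at which the quotient develops singularities. The hypothesis $(v,v)=0$ is exactly what excludes this in the relevant cases: Theorem \ref{Mukaidualalgebraic} guarantees that once $\mathcal{M}^s(v)$ possesses a compact irreducible component it coincides with the whole of $\mathcal{M}^s(v)$, so there is no stable versus semistable discrepancy and the hyperk\"ahler structure of Theorem \ref{hyperkahlerstructureonmodulitheorem} extends over the entire compact surface $X^\vee$ without degeneration. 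The associated metric is then automatically complete by compactness, and the corollary follows.
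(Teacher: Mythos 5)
Your proposal is correct and takes essentially the same route as the paper: the corollary is an immediate specialisation of Theorem \ref{hyperkahlerstructureonmodulitheorem} to $X^\vee=\mathcal{M}^s(v)$, whose hypotheses (compactness, nonemptiness, $(v,v)=0$, smoothness of the stable locus) are built into Definition \ref{Mukaidualdefinition} and the surrounding discussion. The one caveat is that your semistable-degeneration worry in the final paragraph is already dispatched by the compactness of $\mathcal{M}^s(v)$ assumed in that definition rather than by Theorem \ref{Mukaidualalgebraic}, which concerns irreducibility of the stable locus; this is harmless redundancy, not a gap.
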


\begin{proof}
(Theorem  \ref{hyperkahlerstructureonmodulitheorem})

Consider the affine space $\mathcal{A}$ of projective unitary connections on $E$, which has a Euclidean hyperk\"ahler structure
$(\mathcal{A}, \overline{g}, \overline{\omega_1}, \overline{\omega_2}, \overline{\omega_3})$. The metric is given by
\[
\overline{g}(a, b)=\frac{1}{4\pi^2} \int_X  \langle a, b\rangle d\text{Vol}_X, \quad a\in T_A \mathcal{A},
\]
where $T_A \mathcal{A}$ is identified as traceless $ad(E)$ valued 1-forms, and the pointwise inner product is defined using the negative of the $su(r)$ trace pairing and the inner product on 1-forms. The hyperk\"ahler forms are
\[
\overline{\omega}_i (a, b)= \frac{-1}{4\pi^2}\int_X \Tr (a \wedge b) \wedge \omega_i, \quad a, b\in T_A\mathcal{A}.
\]
The corresponding complex structures $I_i a$ are simply acting pointwise on the 1-form part of $a$ by the negative of precomposition:
\[
I_i a= -a\circ I_i, \quad a\in T_A\mathcal{A}.
\]
Therefore $\overline{\omega_i}(\cdot{}, \cdot{})=\overline{g}(I_i \cdot{}, \cdot{})$.

The space $\mathcal{A}$ admits an action by the gauge group $\mathcal{G}$ of $PU(r)$ gauge transformations. The hyperk\"ahler moment map is  $\mu=(\mu_1, \mu_2, \mu_3)$, where
\[
\mu_i=\frac{-1}{4\pi^2} F_A \wedge \omega_i.
\] 
The zeros of $\mu$ are just the ASD connections on $X$. At a point $A$ representing an irreducible ASD connection, the group action is free, and the moment map $\mu$ is regular at $A$. We have $X^\vee= \mu^{-1}(0) /\mathcal{G}$, which equips $X^{\vee}$ with a hyperk\"ahler structure.

For applications in this paper, it is useful to recall the description of the hyperk\"ahler structure on the quotient. At a smooth point $A$, there is a canonical orthogonal decomposition of vector spaces
\begin{equation}\label{hyperkahlerquotientconstructiondecompositionformula}
\begin{split}
T_A \mathcal{A}=\Omega^1(X, ad_0 (E)  )= & T_A\mathcal{M}  \bigoplus \\ & (\Lie \mathcal{G})  A \oplus I_1(\Lie \mathcal{G})  A \oplus I_2(\Lie \mathcal{G})  A \oplus I_3(\Lie \mathcal{G})  A.
\end{split}
\end{equation}
Here $ad_0(E)$ means the traceless part of $ad(E)$, and the tangent space for $\mathcal{M}$ is identified with the finite dimensional vector space of solutions to the linearised ASD equation and the Coulumb gauge condition
\begin{equation}
T_A \mathcal{M}= \{ a\in T_A\mathcal{A} : \quad d^+_A a=0, d_A^* a=0                   \}.
\end{equation}
This is a module of the quaternionic action. The Lie algebra $\Lie \mathcal{G}$ acts at $A$ and the deformation it generates are of the form $d_A \Phi$ for some $\Phi \in \Omega^0(X, ad_0(E))$; these are the elements of $(\Lie \mathcal{G}) A$. The hyperk\"ahler structure $(g^\mathcal{M}, \omega_1^\mathcal{M}, \omega_2^\mathcal{M}, \omega_3^\mathcal{M}) $ on the tangent space $T_A\mathcal{M}$ is then the natural restriction of the Euclidean hyperk\"ahler structure.

The  Levi-Civita connection on the moduli space $\mathcal{M}$ is described as follows. Let $a'$ be a tangent vector field defined on a local open set $T\subset \mathcal{M}$ with coordinates $\tau_i$. We represent this $a'$ as a map from $T$ to the infinite dimensional vector space $\Omega^1(X, ad_0(E))$, which at any point $\tau\in T$ lands in the corresponding tangent space $T_A\mathcal{M}\subset T_A\mathcal{A}=\Omega^1(X, ad_0 (E)  )$. Then to compute the Levi-Civita connection $\nabla^{L.C.}_{\frac{\partial}{\partial \tau_i}  }a$, we first calculate the derivative $\frac{\partial a'}{\partial \tau_i}$, and then orthogonally project to $T_A\mathcal{M}$. This turns out to be well defined.
\end{proof}

\subsection{The universal connection}\label{universalconnection}

The concept of universal bundle in differential geometry and algebraic geometry has a subtle difference. We fix a Mukai vector $v=v(E)$ with $(v,v)=0$, which determines the topological type of a Hermitian bundle $E\to X$. The moduli space of irreducible HYM connections (assumed to be compact and nonempty) must be a K3 surface, called the Mukai dual K3 surface $X^\vee$. The associated universal bundle of $PU(r)$ ASD connections exist unconditionally over $X\times X^\vee$. Since $X\times X^\vee$ has no torsion cohomology, the $PU(r)$ bundle lifts topologically to a $U(r)$ vector bundle $\mathcal{E}\to X\times X^\vee$, and the connections on $X$ fibres can be lifted to the tautological HYM connections on $E\to X$. This $\mathcal{E}\to X\times X^\vee$ is the \textbf{universal bundle} in differential geometry which exists unconditionally, wheras in algebraic geometry compatibility with complex structures on $X^\vee$ imposes further conditions on $c_1(\mathcal{E})$ which are not always satisfied.

Our aim is to put an optimal global connection on $\mathcal{E}$, which restricts fibrewise to the  HYM connection on $X$, by adapting \cite{DonaldsonKronheimer} Section 5.2.3. The na\"ive idea is to start from the space of all unitary connections $\mathcal{A}$, and seek a canonical connection on the principal $U(r)$ bundle $P\times\mathcal{A}$ over $X\times\mathcal{A}$, where $P\to X$ is the principal bundle corresponding to $E\to X$. The canonical connection should be invariant under the action of the gauge group of unitary transformations $\mathcal{G}$, and one tries to descend it to the quotient bundle over $\mathcal{A}/\mathcal{G}$. This is the approach in \cite{Bartocci1}, but it contains missing steps because the quotient bundle is only a $PU(r)$ bundle due to the nontrivial centre in $U(r)$.

To remedy this, we first replace $\mathcal{A}$ and $\mathcal{G}$ by their projective unitary counterparts. We will tacitly restrict attention to the locus of irreducible connections. Now we define the universal connection $\nabla^{univ}$ on $P\times \mathcal{A}\to X\times\mathcal{A}$. At the point $(x, A)\in X\times \mathcal{A}$, we put
\begin{equation}\label{universalconnectionansatz}
\begin{cases}
\nabla^{univ}_v= \nabla^A_v &\quad v\in T_xX, \\
\nabla^{univ}_a= \nabla^{trivial}_a+ G_A d_A^* a &\quad a\in T_A\mathcal{A},
\end{cases}
\end{equation}
where $G_A$ is the inverse of the Laplacian $\Lap_A=d_A^*d_A$, which is well defined because of the irreducible connection condition. If we think of the universal connection as a $u(r)$-valued 1-form on $P\times \mathcal{A}$, then the alternative definition is 
\[
\begin{cases}
A^{univ}|_{P\times\{A\}}= A, \\
A^{univ}(a)= G_A d_A^* a, \quad a\in T_A\mathcal{A}.
\end{cases}
\]
This $\nabla^{univ}$ is $\mathcal{G}$ invariant, so descends to the quotient bundle over $\mathcal{A}/\mathcal{G}$.

\begin{lem}(\cf \cite{DonaldsonKronheimer} Proposition 5.2.17 with minor modifications)
The curvature $F(\nabla^{univ})$ at the point $(x,A)\in X\times \mathcal{A}$ is given by
\begin{equation}\label{universalbundlecurvature}
\begin{cases}
F( \nabla^{univ} ) (u_1, u_2)= F_A(u_1, u_2), &\quad u_1, u_2\in T_x X, \\
F(  \nabla^{univ}  )( a, u   )=\langle a, u\rangle, &\quad a\in T_A \mathcal{A}, u\in T_x X, \quad  d_A^*a=0, \\
F(    \nabla^{univ})(a_1, a_2)= -2 G_A \{ a_1, a_2 \}, &\quad a_1, a_2 \in T_A \mathcal{A},\quad d_A^*a_1=d_A^*a_2=0.
\end{cases}
\end{equation}
where $\{ a_1, a_2\}$ means pointwise taking the Lie bracket on the bundle part, and contracting the 1-form part using the metric on $X$.
\end{lem}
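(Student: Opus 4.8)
The plan is to read off all three components of the curvature from the operator $F(\nabla^{univ})(Y,Z)=\nabla^{univ}_Y\nabla^{univ}_Z-\nabla^{univ}_Z\nabla^{univ}_Y-\nabla^{univ}_{[Y,Z]}$, evaluated at a fixed base point $(x,A)$ lying in the irreducible locus, so that $G_A$ is defined. The whole computation is organised around two simplifications. First, I would extend the horizontal inputs $u,u_1,u_2\in T_xX$ to vector fields on $X$ that are constant along $\mathcal{A}$, and the vertical inputs $a,a_1,a_2\in T_A\mathcal{A}$ to constant vector fields on the affine space $\mathcal{A}$; then every relevant Lie bracket of vector fields, $[a_i,u_j]$ and $[a_1,a_2]$, vanishes, so the term $\nabla_{[Y,Z]}$ drops out. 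Second, the Coulomb gauge hypothesis $d_A^*a=0$ makes the $\mathcal{A}$-direction connection term $\phi_a:=G_Ad_A^*a$ vanish identically over the slice $X\times\{A\}$ at the base point; this will kill all the zeroth-order and quadratic contributions in the mixed and vertical components.

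For the pure horizontal component, both inputs are tangent to $X\times\{A\}$, on which $\nabla^{univ}$ restricts to $\nabla^A$ by construction, so the computation stays inside the slice and returns $F_A(u_1,u_2)$. For the mixed component I write $\nabla^{univ}_a s=\partial_a s+\phi_a s$ and $\nabla^{univ}_u s=\nabla^A_u s$; expanding $F(a,u)s$ and discarding the terms that vanish at the base point because $\phi_a=0$ there, the only survivor is $\partial_a(\nabla^{A}_u s)$, and since the connection varies affinely in $\mathcal{A}$, $\partial_a$ of the connection matrix contracted with $u$ is just the evaluation $a(u)$. This is exactly the contraction denoted $\langle a,u\rangle$.

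The substantive case is the pure vertical component. Writing $\phi_i=G_Ad_A^*a_i$ and expanding $F(a_1,a_2)s$, the commuting second derivatives cancel and, because $\phi_1=\phi_2=0$ at the base point, all the transport and bracket terms $\phi_i\partial_{a_j}s$ and $\phi_1\phi_2$ disappear, leaving $F(a_1,a_2)=\partial_{a_1}\phi_2-\partial_{a_2}\phi_1$. Since $d_A^*a_i=0$ at the base point the variation of the Green's operator $G_A$ contributes nothing, so $\partial_{a_i}\phi_j=G_A\,(\partial_{a_i}d_A^*)\,a_j$. The main work, and the step I expect to be the real obstacle, is computing the variation of the codifferential. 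I would obtain it by differentiating the adjunction $\int_X\langle d_A^*\beta,\psi\rangle=\int_X\langle\beta,d_A\psi\rangle$ in the direction $a_i$ with $\beta,\psi$ held fixed; using $\partial_{a_i}d_A=[a_i,\,\cdot\,]$ on the right-hand side and then the $ad$-invariance of the trace pairing to move the bracket onto the first slot, one identifies $(\partial_{a_i}d_A^*)a_j=\{a_j,a_i\}$, with $\{\,\cdot\,,\cdot\,\}$ the fibrewise bracket contracted against the metric as in the statement.

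Assembling the two terms and using the antisymmetry $\{a_2,a_1\}=-\{a_1,a_2\}$, which comes from pairing the symmetric metric contraction with the antisymmetric fibre bracket, gives $F(a_1,a_2)=G_A\{a_2,a_1\}-G_A\{a_1,a_2\}=-2\,G_A\{a_1,a_2\}$, as claimed. The argument follows \cite{DonaldsonKronheimer} Proposition 5.2.17; beyond their account, the points needing care are tracking the Lie-algebra and trace conventions in the $U(r)$ setting, checking that the base-point Coulomb condition is precisely what trivialises the lower-order terms, and getting the sign and the factor of $2$ right from the variation of $d_A^*$.
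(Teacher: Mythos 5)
Your proposal is correct and is essentially the computation the paper delegates to Donaldson--Kronheimer Proposition 5.2.17: a direct expansion of the curvature of the ansatz (\ref{universalconnectionansatz}) using constant extensions on the affine space $\mathcal{A}$, the identical vanishing of $\phi_a=G_Ad_A^*a$ on the slice $X\times\{A\}$ under the Coulomb hypothesis, and the variation of the codifferential $(\partial_{a_1}d_{A}^*)a_2=\{a_2,a_1\}$ obtained from differentiating the adjunction. The signs and the factor $2$ come out right, so there is nothing to add.
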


Now given the universal family $\mathcal{E}\to X\times X^\vee$, we get a family of irreducible $PU(r)$ ASD connections, hence a map into this quotient bundle, so we can pullback $\nabla^{univ}$ to obtain a connection on the associated $PU(r)$ bundle of $\mathcal{E}$, also denoted $\nabla^{univ}$, with the same curvature formula.

\begin{prop}
The curvature of the $PU(r)$ connection $\nabla^{univ}$ over $X\times X^\vee$ 
 is of Dolbeault type (1,1) on the complex manifold $X\times X^\vee$, under every choice of complex structure in the hyperk\"ahler triple. We may call such a connection \textbf{triholomorphic}.
\end{prop}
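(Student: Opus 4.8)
The plan is to use the standard fact that a real two-form $\beta$ on a complex manifold $(M, I)$ is of Dolbeault type $(1,1)$ if and only if it is $I$-invariant in both arguments, \ie $\beta(Iv, Iw)=\beta(v,w)$ for all tangent vectors $v,w$. Fixing one complex structure $I_i$ from the hyperk\"ahler triple, the relevant complex structure on $X\times X^\vee$ is the product one: it acts as $I_i$ on $T_xX$ and, by the hyperk\"ahler quotient description recalled in the proof of Theorem \ref{hyperkahlerstructureonmodulitheorem}, as $a\mapsto -a\circ I_i$ on the moduli directions $T_A\mathcal{M}=T_{[A]}X^\vee$. So I would verify $I_i$-invariance separately for each of the three blocks of the curvature formula \eqref{universalbundlecurvature}; since the argument is uniform in $i$, the conclusion then holds for all three complex structures, giving triholomorphicity.

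For the base--base block $F(\nabla^{univ})(u_1,u_2)=F_A(u_1,u_2)$ with $u_1,u_2\in T_xX$, I would invoke that on a hyperk\"ahler surface the anti-self-dual two-forms are precisely those that are primitive and of type $(1,1)$ with respect to every $I_i$; since $A$ is ASD, $F_A$ is $I_i$-invariant and the block is immediately of type $(1,1)$. For the mixed block $F(\nabla^{univ})(a,u)=\langle a,u\rangle$, which is the contraction of the $ad(E)$-valued one-form $a$ against $u$, I would compute directly from $I_ia=-a\circ I_i$ and $I_i^2=-1$ that $(I_ia)(I_iu)=-a(I_i^2u)=a(u)$, so this block is $I_i$-invariant as well. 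One should note here that $I_ia$ is again a legitimate moduli direction (see below), so that the same block of \eqref{universalbundlecurvature} applies to the pair $(I_ia,I_iu)$.

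The moduli--moduli block $F(\nabla^{univ})(a_1,a_2)=-2G_A\{a_1,a_2\}$ requires slightly more care, and this is the step I expect to be the main (if mild) obstacle. First, in order to apply this formula to $(I_ia_1,I_ia_2)$ at all, I would use that $T_A\mathcal{M}$ is a quaternionic submodule of $T_A\mathcal{A}$ --- exactly the statement highlighted in the hyperk\"ahler quotient construction --- so that $I_ia_j\in T_A\mathcal{M}$ and in particular remains in Coulomb gauge, $d_A^*(I_ia_j)=0$. Then I would show $\{I_ia_1,I_ia_2\}=\{a_1,a_2\}$: the bracket takes the Lie bracket on the $ad(E)$-parts, which $I_i$ does not touch, and contracts the one-form parts with the metric $g$ on $X$; since each $I_i$ is a $g$-isometry, precomposition with $I_i$ preserves the pairing of one-forms, and the two sign factors coming from $I_ia_j=-a_j\circ I_i$ cancel. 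Hence the bracket is unchanged and, applying the $I_i$-independent Green's operator $G_A$, the two blocks agree.

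The only genuine subtleties to flag are conceptual rather than computational: one must use the \emph{same} index $i$ in the product complex structure on both factors of $X\times X^\vee$, and one must know that the moduli tangent space is preserved by each $I_i$, so that the Coulomb gauge condition and hence the curvature formula \eqref{universalbundlecurvature} persist after applying $I_i$. Both facts are furnished by the hyperk\"ahler quotient construction, so once they are in place the type-$(1,1)$ property reduces to the three short invariance checks above, carried out uniformly in $i$.
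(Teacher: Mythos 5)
Your proposal is correct and follows essentially the same route as the paper: the paper also verifies the type $(1,1)$ property block-by-block from the curvature formula (\ref{universalbundlecurvature}), using the ASD condition for the base--base block, the identity $\langle I_i a, v\rangle = -\langle a, I_i v\rangle$ for the mixed block, the invariance of the bracket $\{\cdot,\cdot\}$ for the moduli--moduli block, and the same tacit inputs from the hyperk\"ahler quotient (that $I_i a_j$ stays in $T_A X^\vee$ and in Coulomb gauge). The only cosmetic difference is that you phrase type $(1,1)$ as real $I_i$-invariance $\beta(I_i\cdot, I_i\cdot)=\beta(\cdot,\cdot)$, whereas the paper tests the vanishing of the $(0,2)$ component against complexified vectors $u+\sqrt{-1}I_i u$ and $a+\sqrt{-1}I_i a$; these are equivalent formulations with identical computational content.
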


\begin{proof}
We check its (0,2) component of $F(\nabla^{univ})$ vanishes, and the (2,0) part is similar. We test the curvature formula (\ref{universalbundlecurvature}) against complexified vectors of the shape $u+\sqrt{-1} I_i u$ and $a+\sqrt{-1}I_i a$. This is a simple calculation. For the part of curvature pairing with two tangent vectors from $T_AX^\vee\subset T_A\mathcal{A}$ (See Section \ref{hyperkahlerquotientreview} for notations),
	\[
	F(    \nabla^{univ})(a_1+ \sqrt{-1}I_ia_1, a_2+\sqrt{-1}I_i a_2)= -2 G_A \{ a_1+\sqrt{-1}I_ia_1, a_2+\sqrt{-1}I_ia_2 \},
	\]
	which vanishes by
	\[
\begin{split}
&\{ a_1+\sqrt{-1}I_ia_1, a_2+\sqrt{-1}I_ia_2 \} 
\\
& =\{ a_1, a_2\} -\{ I_i a_1, I_i a_2 \} +\sqrt{-1}(  \{  I_ia_1, a_2   \} +\{  a_1, I_ia_2     \}           )   =0.
\end{split}
\]
For the part of $F(\nabla^{univ})$ pairing with two vectors from $T_x X$, this is merely the fibrewise ASD condition. For the cross term in $F(\nabla^{univ})$, 
\[
F(    \nabla^{univ})(a+ \sqrt{-1}I_ia, u+\sqrt{-1}I_i u)=\langle a+ \sqrt{-1}I_ia, u+\sqrt{-1}I_i u \rangle,
\]
which vanishes by
$
\langle I_ia, v\rangle= -\langle a, I_iv\rangle.
$

We have tacitly used that $I_ia_j\in T_AX^\vee$ are still in Coulumb gauge, and  $I_i$ acts on the coupled 1-form $a_j$ by pointwise applying the negative of precomposition. These come from description of the hyperk\"ahler quotient construction (\cf Section \ref{hyperkahlerquotientreview}).
\end{proof}

\begin{thm}\label{universalconnectionmainproperties}
There is a Hermitian connection on $\mathcal{E}\to X\times X^\vee$, still denoted $\nabla^{univ}$, which lifts the triholomorphic $PU(r)$ connection, and whose central curvature satisfies
\begin{equation}\label{centralcurvatureBfield}
\frac{\sqrt{-1}}{2\pi r}\Tr F(\nabla^{univ})= \mathcal{B}+\mathcal{B'},  
\end{equation}	
where $	\mathcal{B},\mathcal{B'}$ are the harmonic 2-forms representing $\frac{1}{r}c_1(E)\in H^2(X)$ and $\frac{1}{r}c_1(\mathcal{E}|_x) \in H^2(X^\vee)$ for any $x\in X$. In particular, when $c_1(E)$ is orthogonal to the hyperk\"ahler triple on $X$ and $c_1(\mathcal{E}|_x)$ is orthogonal to the hyperk\"ahler triple on $X^\vee$, then $\nabla^{univ}$ is a triholomorphic Hermitian connection.
\end{thm}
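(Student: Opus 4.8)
The plan is to reduce everything to the central $U(1)$ part, since the projective (traceless) part of the connection is already controlled by the previous Proposition. Having fixed the topological $U(r)$ bundle $\mathcal{E}$ lifting the $PU(r)$ bundle, the $U(r)$ connections on $\mathcal{E}$ that induce the given triholomorphic $PU(r)$ connection form an affine space: any two differ by an imaginary-valued $1$-form $\eta$ acting as $\eta\cdot\mathrm{Id}$, equivalently by a connection on the determinant line bundle $\det\mathcal{E}=\Lambda^r\mathcal{E}$. Under such a change the traceless part $F_0$ of the curvature is unaffected---so it remains of type $(1,1)$ for all three complex structures---while the central part transforms by $\frac{\sqrt{-1}}{2\pi r}\Tr F \mapsto \frac{\sqrt{-1}}{2\pi r}\Tr F + \frac{\sqrt{-1}}{2\pi}\,d\eta$. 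Thus the only quantity left to arrange is the central curvature, a closed $2$-form in the fixed class $\frac{1}{r}c_1(\mathcal{E})\in H^2(X\times X^\vee)$.

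First I would use Hodge theory on the compact K\"ahler manifold $X\times X^\vee$ (with the product metric) to select the lift. Starting from any lift, its central curvature and the harmonic representative of $\frac{1}{r}c_1(\mathcal{E})$ represent the same class, hence differ by an exact form $\frac{\sqrt{-1}}{2\pi}\,d\eta$; choosing the determinant connection accordingly makes the central curvature equal to that harmonic representative. This produces the desired Hermitian connection $\nabla^{univ}$ and reduces the curvature identity to computing the harmonic representative.

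Next I would identify the class by K\"unneth. Because K3 surfaces have $H^1=0$, we get $H^2(X\times X^\vee)=\mathrm{pr}_X^*H^2(X)\oplus \mathrm{pr}_{X^\vee}^*H^2(X^\vee)$, and restricting $\mathcal{E}$ to $X\times\{A\}$ and to $\{x\}\times X^\vee$ yields $c_1(\mathcal{E})=\mathrm{pr}_X^*c_1(E)+\mathrm{pr}_{X^\vee}^*c_1(\mathcal{E}|_x)$, with $c_1(\mathcal{E}|_x)$ independent of $x$ since $X$ is connected. As the metric is a product, harmonic forms split into pullbacks of harmonic forms on the factors, so the harmonic representative of $\frac{1}{r}c_1(\mathcal{E})$ is exactly $\mathcal{B}+\mathcal{B}'$, giving (\ref{centralcurvatureBfield}).

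Finally, for the ``in particular'' clause I would invoke the standard hyperk\"ahler fact that on a K3 surface $\Lambda^2_+=\langle\omega_1,\omega_2,\omega_3\rangle$, so a harmonic $2$-form is of type $(1,1)$ for all three complex structures precisely when it is anti-self-dual, equivalently orthogonal to $\omega_1,\omega_2,\omega_3$. Applied to each factor, $\mathcal{B}$ (resp. $\mathcal{B}'$) is of type $(1,1)$ for all three product complex structures iff $c_1(E)$ (resp. $c_1(\mathcal{E}|_x)$) is orthogonal to the hyperk\"ahler triple on $X$ (resp. $X^\vee$); when both hold, the central curvature is triholomorphic, and together with the triholomorphic traceless part this makes $\nabla^{univ}$ triholomorphic. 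The main point---rather than a deep obstacle---is the bookkeeping around the centre of $U(r)$: one must check that adjusting the determinant connection genuinely leaves the projective part and its type-$(1,1)$ property intact, which is exactly the subtlety the earlier Remark flags as missing in \cite{Bartocci1}.
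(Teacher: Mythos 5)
Your proposal is correct and takes essentially the same route as the paper: both reduce the problem to prescribing the central $U(1)$ (determinant) connection on $\Lambda^r\mathcal{E}$, identify $\frac{1}{r}c_1(\mathcal{E})$ via the K\"unneth splitting (using $H^1=0$ for K3 surfaces) so that the harmonic representative is $\mathcal{B}+\mathcal{B}'$, and deduce the ``in particular'' clause from the fact that a harmonic $2$-form orthogonal to the hyperk\"ahler triple is ASD, hence of type $(1,1)$ for all three complex structures, combined with the triholomorphy of the $PU(r)$ part. The only cosmetic difference is that the paper builds the determinant connection as a tensor product of connections with prescribed curvatures $r\mathcal{B}$ and $r\mathcal{B}'$ on line bundles $L\to X$ and $L'\to X^\vee$, whereas you start from an arbitrary lift and correct its central curvature by an exact form; these amount to the same construction.
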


\begin{proof}
To obtain $\nabla^{univ}$ it is enough to prescribe a $U(1)$ connection on the line bundle $\Lambda^r \mathcal{E}$. The line bundle $\Lambda^r \mathcal{E}$ is topologically the tensor product of line bundles $L\to X$ and $L'\to X^\vee$; on both line bundles we can find a connection with prescribed curvature $r\mathcal{B}$ and $r\mathcal{B'}$ because they represent the appropriate Chern class; taking the tensor product connection gives the connection on $\Lambda^r \mathcal{E}$. This gives us $\nabla^{univ}$.

When the first Chern classes satisfy the orthogonality conditions, then Hodge theory implies $\mathcal{B}\wedge \omega_i=0$ and $\mathcal{B}'\wedge\omega^{X^\vee}_i=0$, \ie the central part of $F(\nabla^{univ})$ is triholomorphic. But we also know the associated $PU(r)$ connection is triholomorphic, so the $U(r)$ connection $\nabla^{univ}$ is triholomorphic as well. 
\end{proof}

\begin{rmk}
The Chern class conditions are necessary for the existence of triholomorphic $U(r)$ connections, because a triholomorphic connection restricted to any fibre copy of $X$ and $X^\vee$ is ASD. A related issue is that $\nabla^{univ}$ is not always holomorphic with respect to a given complex structure on $X^\vee$, namely the universal bundle in the algebro-geometric sense needs not always exist.	
\end{rmk}

\begin{rmk}
The construction of the $U(r)$ connection $\nabla^{univ}$ has the ambiguity of twisting by a $u(1)$-valued exact 1-form pulled back from $X^\vee$, alternatively thought of as a flat $U(1)$ connection. This is intimately related to the fact that in algebraic geometry, the definition of the universal family involves a possible twist by a holomorphic line bundle.
\end{rmk}

\section{The Mukai dual of a K3 surface}\label{TheMukaidualofaK3surface}

% We show that $\nabla^{univ}$ is \textbf{triholomorphic} if and only if a Chern class condition $c_1(\mathcal{E}|_x)\cup \omega_i^{X^\vee}=0$ is satisfied. (\cf Proposition \ref{triholomorphicconnectiononMukaidual}). There exists an alternative characterisation of this condition purely in terms of the period data on $X$ (\cf Theorem \ref{Mukaidualhyperkahlertriple}). 

In this Chapter we study the geometry of the Mukai dual (\cf Definition \ref{Mukaidualdefinition}) in more detail using the $U(r)$ connection $\nabla^{univ}$ in Theorem \ref{universalconnectionmainproperties}, with particular emphasis given to the idea of \textbf{duality}. 
 We rephrase the relation of the \textbf{hyperk\"ahler periods} on $X$ and $X^\vee$ in terms of Donaldson's $\mu$-map in Section \ref{HyperkahlerperiodsontheMukaidualK3}.
The induced triholomorphic $PU(r)$ connection induces a family of ASD connections on $X^\vee$, parametrised by $X$. Modulo the issue of strict stability, \ie the irreducibility of these ASD connections, this allows us to interpret $X$ as a moduli space of $PU(r)$ ASD connections over $X^\vee$, inducing another \textbf{hyperk\"ahler structure} on $X$, which turns out to agree with the original hyperk\"ahler structure, as we discuss in Section \ref{ASDconnectionsonMukaidual}.

\subsection{Hyperk\"ahler periods on the Mukai dual K3 surface}\label{HyperkahlerperiodsontheMukaidualK3}

We use the connection provided by Theorem \ref{universalconnectionmainproperties} to give a differential geometric understanding of the Hodge structure of $X^\vee$. 

\begin{prop}(compare \cite{DonaldsonKronheimer} Page 197)
	The cohomology class of the hyperk\"ahler 2-form $\omega_i^{X^\vee}$ on the moduli space $X^\vee$ is given by the slant product
	\begin{equation}\label{Donaldsonmumaphyperkahler}
	[\omega_i^{X^\vee}]=(ch_2(\mathcal{E})- \frac{1}{2r} c_1(\mathcal{E}) ^2)\wedge [\omega_i]/[X]= -\frac{1}{2r}p_1(ad(\mathcal{E}))\cup [\omega_i]/[X].
	\end{equation}
\end{prop}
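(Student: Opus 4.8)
The plan is to produce explicit Chern--Weil representatives for both sides and match them fibrewise over $X$, with the zero-locus of the hyperk\"ahler moment map doing the essential work.

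First I would note that the combination $ch_2(\mathcal{E})-\frac{1}{2r}c_1(\mathcal{E})^2$ is insensitive to the determinant part of the connection: if $F_0$ denotes the curvature of the associated $PU(r)$ connection underlying $\nabla^{univ}$, then Chern--Weil theory gives the pointwise identity $ch_2(\mathcal{E})-\frac{1}{2r}c_1(\mathcal{E})^2=-\frac{1}{8\pi^2}\Tr(F_0\wedge F_0)$, the central curvature $\mathcal{B}+\mathcal{B}'$ of Theorem \ref{universalconnectionmainproperties} cancelling identically. Since $-\frac{1}{8\pi^2}\Tr(F_0\wedge F_0)$ is simultaneously the Chern--Weil form of $-\frac{1}{2r}p_1(ad(\mathcal{E}))$, the second equality in (\ref{Donaldsonmumaphyperkahler}) is just the corresponding purely algebraic identity of characteristic classes, and it remains to fibre-integrate $-\frac{1}{8\pi^2}\Tr(F_0\wedge F_0)\wedge\omega_i$ over $X$ and identify the result with $\omega_i^{X^\vee}$, using the explicit curvature (\ref{universalbundlecurvature}).

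Next I would split $F_0=F^{2,0}+F^{1,1}+F^{0,2}$ according to the product bigrading of $X\times X^\vee$, so that by (\ref{universalbundlecurvature}) the $(2,0)$ part is the fibre curvature $F_A$, the $(1,1)$ part is the tautological pairing $F^{1,1}(a,u)=\langle a,u\rangle$ identifying $T_AX^\vee$ with harmonic $ad_0(E)$-valued $1$-forms, and the $(0,2)$ part is $-2G_A\{a_1,a_2\}$. Wedging with the fibre form $\omega_i$ and integrating over the $4$-manifold $X$ extracts the $(2,2)$ component of $\Tr(F_0\wedge F_0)$, namely $2\Tr(F^{2,0}\wedge F^{0,2})+\Tr(F^{1,1}\wedge F^{1,1})$.

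The key step, which I expect to carry the geometric content, is that the mixed term dies exactly because $A$ solves the ASD equation, i.e. lies in the zero-locus of the moment map of Section \ref{hyperkahlerquotientreview}. Indeed $F^{2,0}=F_A$ is anti-self-dual, so $\mu_i\propto F_A\wedge\omega_i=0$; as $F^{0,2}(a_1,a_2)$ is a $0$-form on $X$, the integrand $\Tr(F^{2,0}\wedge F^{0,2})\wedge\omega_i=\Tr\big((F_A\wedge\omega_i)F^{0,2}\big)$ vanishes pointwise, leaving only $\Tr(F^{1,1}\wedge F^{1,1})$. Expanding this term on $a_1,a_2\in T_AX^\vee$ and antisymmetrising the two $X^\vee$-legs reduces the $X$-integrand to $\Tr(a_1\wedge a_2)$, and one obtains
\[
\Big[-\tfrac{1}{8\pi^2}\Tr(F_0\wedge F_0)\wedge\omega_i\Big]\big/[X]\,(a_1,a_2)=\tfrac{1}{4\pi^2}\int_X\Tr(a_1\wedge a_2)\wedge\omega_i,
\]
which, up to the overall normalisation fixed in Section \ref{hyperkahlerquotientreview}, is precisely $\overline{\omega}_i(a_1,a_2)=\omega_i^{X^\vee}(a_1,a_2)$. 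The only remaining work is the bookkeeping of constants, of the sign produced by antisymmetrisation, and the standard verification that the Chern--Weil form of the pulled-back $\nabla^{univ}$ represents the asserted cohomology class independently of the connection twist; no further geometric input is required.
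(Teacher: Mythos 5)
Your proposal follows essentially the same route as the paper's proof: represent the class by the Chern--Weil form of $\nabla^{univ}$, decompose the curvature by bidegree as in (\ref{typedecompositionofuniversalcurvature}), observe that only terms with two legs on $X$ and two on $X^\vee$ survive the fibre integration against $\omega_i$, and evaluate the resulting $2$-form on $a_1,a_2\in T_AX^\vee$ via $\Omega(a,u)=\langle a,u\rangle$ from (\ref{universalbundlecurvature}), landing on $\frac{1}{4\pi^2}\int_X\Tr(a_1\wedge a_2)\wedge\omega_i$. Your one genuine variation is to pass to the traceless $PU(r)$ curvature $F_0$ at the outset, via the pointwise identity $ch_2(\mathcal{E})-\frac{1}{2r}c_1(\mathcal{E})^2=-\frac{1}{8\pi^2}\Tr(F_0\wedge F_0)$: then the mixed term $\Tr(F^{X,X}\wedge F^{X^\vee,X^\vee})\wedge\omega_i$ dies pointwise because $F_A\wedge\omega_i=0$ for the genuinely ASD $PU(r)$ connection. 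This is tidier than the paper's bookkeeping, where the $U(r)$ curvature is kept, that mixed term contributes $(\int_X\mathcal{B}\wedge\omega_i)\,r\mathcal{B}'$ through its central part, and the contribution is cancelled cohomologically against the $\frac{1}{2r}c_1(\mathcal{E})^2$ subtraction. That computational core of your argument is correct.

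The gap is in the two sign-sensitive identifications, which are precisely where this proposition is delicate (the paper itself remarks on factor discrepancies with \cite{DonaldsonKronheimer}). First, since $p_1(ad(\mathcal{E}))=2r\,ch_2(\mathcal{E})-c_1(\mathcal{E})^2$ (for $r=2$, $c_1=0$ this is the familiar $p_1(ad)=-4c_2$), the form $-\frac{1}{8\pi^2}\Tr(F_0\wedge F_0)$ is the Chern--Weil representative of $+\frac{1}{2r}p_1(ad(\mathcal{E}))$, not of $-\frac{1}{2r}p_1(ad(\mathcal{E}))$ as you assert; the paper's own proof obtains $+\frac{1}{2r}p_1$ at the corresponding step. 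Second, with the normalisation of Section \ref{hyperkahlerquotientreview}, $\overline{\omega}_i(a,b)=-\frac{1}{4\pi^2}\int_X\Tr(a\wedge b)\wedge\omega_i$, so your final expression $\frac{1}{4\pi^2}\int_X\Tr(a_1\wedge a_2)\wedge\omega_i$ equals \emph{minus} $\omega_i^{X^\vee}(a_1,a_2)$ --- which is exactly what the paper's proof concludes (``minus the hyperk\"ahler form''). Burying this under ``up to the overall normalisation'' discards the only delicate content of the final step: the whole point of the statement is a precise identity of cohomology classes. As it happens, your two slips compensate so that you reproduce the printed $p_1$-form of (\ref{Donaldsonmumaphyperkahler}), but a consistent run of your own computation gives $[\omega_i^{X^\vee}]=-(ch_2(\mathcal{E})-\frac{1}{2r}c_1(\mathcal{E})^2)\wedge[\omega_i]/[X]=-\frac{1}{2r}p_1(ad(\mathcal{E}))\cup[\omega_i]/[X]$, i.e.\ the $ch_2$-form of the printed statement acquires a sign. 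You need to pin both signs down explicitly rather than wave them away, and then flag the resulting discrepancy with the statement as printed.
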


\begin{proof}
	
We can represent the Chern character $ch(\mathcal{E})$ of the universal bundle in terms of the curvature forms:
\[
	Ch(\nabla^{univ})= \Tr ( \exp \frac{ \sqrt{-1} }{2\pi} F(\nabla^{univ})  )=r+ Ch_1(\nabla^{univ}) +Ch_2(\nabla^{univ})+\ldots.
\]
In particular 
$
	Ch_2(\nabla^{univ}) =-\frac{1}{8\pi^2} \Tr (F(\nabla^{univ})\wedge F(\nabla^{univ}  ) ).
$
It is convenient to decompose the curvature $F(\nabla^{univ})$ into 3 parts, depending on whether the components of the 2-form factor comes from $X$ or $X^\vee$,
\begin{equation}\label{typedecompositionofuniversalcurvature}
 F(\nabla^{univ})=  F(\nabla^{univ})^{X, X} +F(\nabla^{univ})^{X, X^\vee}+ F(\nabla^{univ})^{X^\vee, X^\vee}.
 \end{equation}
 For later convenience, we denote \begin{equation}\label{definitionofOmega}
 \Omega=  F(\nabla^{univ})^{X, X^\vee}.
 \end{equation}

Thus the slant product $ch_2(E)\cup [\omega_i]/[X]$ can be represented by the integration along fibres
\begin{equation*}
	\begin{split}
	&\int_X Ch_2(\nabla^{univ}) \wedge \omega_i=-\frac{1}{8\pi^2}\int_X  \Tr (F(\nabla^{univ})\wedge F(\nabla^{univ} ) )\wedge \omega_i  \\
	&=-\frac{1}{8\pi^2} \int_X \Tr (\Omega\wedge \Omega)\wedge \omega_i-\frac{1}{4\pi^2}\int_X \Tr F(\nabla^{univ})^{X,X} \wedge F(\nabla^{univ} )^{X^\vee, X^\vee} \wedge \omega_i \\
	&=-\frac{1}{8\pi^2} \int_X \Tr (\Omega\wedge \Omega)\wedge \omega_i+ (\int_X \mathcal{B}\wedge \omega_i) r \mathcal{B}'
	. 
	\end{split}
\end{equation*}
which is a 2-form on $X^\vee$. The last equality here uses \[
F(\nabla^{univ})^{X,X}\wedge \omega_i= (\mathcal{B}\wedge \omega_i) I_\mathcal{E} .\] 
Now take the cohomology class, we see
\[
\begin{split}
[-\frac{1}{8\pi^2} \int_X \Tr (\Omega\wedge \Omega)\wedge \omega_i]&=\int_X (ch_2(\mathcal{E})- \frac{1}{r} c_1(\mathcal{E}|_x) c_1(E))\wedge [\omega_i] \\
&=(ch_2(\mathcal{E})- \frac{1}{2r} c_1(\mathcal{E}) ^2)\wedge [\omega_i]/[X] \\
&=\frac{1}{2r}p_1(ad(\mathcal{E}) )\cup [\omega_i]/[X].
\end{split}
\]

	We evaluate the 2-form at the point $A\in X^\vee$ on tangent vectors $a_1, a_2\in T_A X^\vee$, to get
	\[
	\begin{split}
	& -\frac{1}{8\pi^2}\iota_{a_2}\iota_{a_1} \int_X \Tr (\Omega\wedge \Omega)\wedge \omega_i \\
	=&
	-\frac{1}{8\pi^2}\iota_{a_2} \int_X \Tr (\iota_{a_1}\Omega\wedge \Omega)\wedge \omega_i
	-\frac{1}{8\pi^2}\iota_{a_2} \int_X \Tr (\Omega\wedge \iota_{a_1}\Omega)\wedge \omega_i \\
	=& -\frac{1}{4\pi^2}\iota_{a_2} \int_X \Tr (\iota_{a_1}\Omega\wedge \Omega)\wedge \omega_i \\
	=& \frac{1}{4\pi^2} \int_X \Tr (\iota_{a_1}\Omega\wedge\iota_{a_2} \Omega)\wedge \omega_i \\
	=& \frac{1}{4\pi^2} \int_X \Tr (a_1\wedge a_2)\wedge \omega_i .
	\end{split}
	\]
	The last equality uses (\ref{universalbundlecurvature}) which says in particular
	\[
	\Omega(a, v)=\langle a, v \rangle, \quad a\in T_A X^\vee, v\in T_x X.
	\]
	We recognise $\frac{1}{4\pi^2} \int_X \Tr (a_1\wedge a_2)\wedge \omega_i $ as minus the hyperk\"ahler form $\omega_i^{X^\vee}$ on the moduli space $X^\vee$, hence the claim. 
\end{proof}

\begin{rmk}
	This quite delicate computation should be compared to \cite{DonaldsonKronheimer}, Page 197, which is essentially the same calculation but outputs different numerical factors. To clarify, our convention is that integration along $X$ commutes with wedging by forms on $X^\vee$.
\end{rmk}

\begin{rmk}
When the rank $r=2$, the relation between $\omega_i^{X^\vee}$ and $\omega_i$ is Donaldson's $\mu$-map in the theory of 4-manifold invariants:
\begin{equation}\label{Donaldsonmumap}
\tilde{\mu}: H^2(X) \to H^2(X^\vee), \quad \alpha\mapsto \alpha\cup \frac{-1}{2r} p_1(ad(\mathcal{E}))\cup [\omega_i]/[X].
\end{equation}
 
\end{rmk}

\begin{thm} \label{Mukaidualhyperkahlertriple}
The $\mu$-map is an isometry. In particular
the volume of $X$ and $X^\vee$ are equal. 
\end{thm}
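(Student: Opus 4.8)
The plan is to reduce the statement to the fact, recorded in Theorem \ref{FourierMukaitransformoncohomologymainperoperties}, that the Fourier--Mukai transform on cohomology preserves the Mukai pairing, together with the elementary observation that for two classes in $H^2$ the Mukai pairing is just the cup product. The first task is then to relate the $\mu$-map of (\ref{Donaldsonmumap})--(\ref{Donaldsonmumaphyperkahler}) to $FM$. Both are slant products against a class on $X\times X^\vee$: the former against $-\frac{1}{2r}p_1(ad(\mathcal{E}))=ch_2(\mathcal{E})-\frac{1}{2r}c_1(\mathcal{E})^2$, the latter against $v(\mathcal{E})^\vee$. Since $H^1(X)=H^1(X^\vee)=0$, a map $H^2(X)\to H^2(X^\vee)$ of this form only feels the $H^2(X)\otimes H^2(X^\vee)$ Künneth component of its kernel, and using $\sqrt{Td(X\times X^\vee)}=(1+\eta_X)(1+\eta_{X^\vee})$ (with $\eta$ the point classes) I would check that this component of $v(\mathcal{E})^\vee$ equals $ch_2(\mathcal{E})^{(2,2)}$, whereas for $-\frac{1}{2r}p_1(ad(\mathcal{E}))$ it equals $ch_2(\mathcal{E})^{(2,2)}-\frac1r\,pr_X^*c_1(E)\cup pr_{X^\vee}^*c_1(\mathcal{E}|_x)$. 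Applying the projection formula, this yields the comparison
\[
\tilde{\mu}(\alpha)=\pi_{H^2}FM(\alpha)-\tfrac1r(\alpha,v)\,c_1(\mathcal{E}|_x),\qquad \alpha\in H^2(X),
\]
where $(\alpha,v)=\int_X\alpha\cup c_1(E)$ is the Mukai pairing of $\alpha$ with $v$.

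On the subspace $v^\perp\cap H^2(X)=c_1(E)^\perp$ the correction term vanishes and $\tilde{\mu}=\pi_{H^2}FM$; here the argument is clean. For $\alpha\in v^\perp$ the degree-zero shadow $\pi_{H^0}FM(\alpha)=-(\alpha,v)$ is zero, so $FM(\alpha)$ has components only in $H^2$ and $H^4$. Writing out the Mukai pairing $(FM(\alpha),FM(\beta))$ for $\alpha,\beta\in v^\perp$, the $H^4$-pieces pair only against the vanishing $H^0$-pieces and drop out, leaving exactly $\int_{X^\vee}\pi_{H^2}FM(\alpha)\cup\pi_{H^2}FM(\beta)$. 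Since $FM$ preserves the Mukai pairing this equals $(\alpha,\beta)=\int_X\alpha\cup\beta$, so $\int_{X^\vee}\tilde{\mu}(\alpha)\cup\tilde{\mu}(\beta)=\int_X\alpha\cup\beta$ for all $\alpha,\beta\in c_1(E)^\perp$. In particular, under the orthogonality hypothesis of Theorem \ref{universalconnectionmainproperties} the hyperk\"ahler triple $\omega_1,\omega_2,\omega_3$ already lies in $c_1(E)^\perp$, so combined with $\tilde{\mu}(\omega_i)=[\omega_i^{X^\vee}]$ from (\ref{Donaldsonmumaphyperkahler}) and $\mathrm{Vol}=\frac12\int\omega_i^2$ this forces $\mathrm{Vol}(X)=\mathrm{Vol}(X^\vee)$ immediately.

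It then remains only to extend the isometry across the at most one-dimensional complement of $c_1(E)^\perp$, and this is where I expect the real work to lie. For a class $\gamma$ with $(\gamma,v)\neq0$ the correction $-\frac1r(\gamma,v)c_1(\mathcal{E}|_x)$ is genuinely present, so I would have to control $\int_{X^\vee}c_1(\mathcal{E}|_x)^2$, the cross terms $\int_{X^\vee}c_1(\mathcal{E}|_x)\cup\pi_{H^2}FM(\gamma)$, and the degree-zero and degree-four shadows of $FM(\gamma)$ that were invisible on $v^\perp$. The natural tool is again Mukai-pairing preservation, now fed through the inverse transform $FM^\vee$ applied to $c_1(\mathcal{E}|_x)$; the whole construction is symmetric under $X\leftrightarrow X^\vee$ with $c_1(E)\leftrightarrow c_1(\mathcal{E}|_x)$, and I expect the twist by $c_1(\mathcal{E}|_x)$ to cancel exactly against this degree-mixing, reassembling into the cup product on all of $H^2(X)$. (One can also argue structurally that, because $\nabla^{univ}$ is triholomorphic, $\tilde{\mu}$ intertwines the Hodge star operators and hence preserves the self-dual/anti-self-dual splitting of $H^2$; this reduces the claim to one positive scalar on the three-dimensional self-dual block---equivalently the volume ratio---and one on the anti-self-dual block, but pinning these scalars down still seems to require the Fourier--Mukai input.) Verifying this exact cancellation along the last direction is the main obstacle; everything else, including the volume corollary in full generality, then follows from the results already established above.
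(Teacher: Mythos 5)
Your strategy is the same as the paper's --- express $\tilde\mu$ through the Fourier--Mukai transform on cohomology and exploit that $FM$ preserves the Mukai pairing --- and your argument on the hyperplane $c_1(E)^\perp\cap H^2(X)$ is correct: there the correction term vanishes, $FM(\alpha)_{H^0}=0$, and pairing preservation gives $\int_{X^\vee}\tilde\mu(\alpha)^2=\int_X\alpha^2$ at once. Two remarks on your treatment of the volume corollary: the orthogonality hypothesis of Theorem \ref{universalconnectionmainproperties} is \emph{not} a standing assumption (the paper explicitly allows nonzero slope), so you should not invoke it; but you do not need it either, since $c_1(E)$ is of type $(1,1)$ for the preferred complex structure and hence automatically orthogonal to $[\omega_2]$ and $[\omega_3]$, and $\int\omega_2^2=2\,\mathrm{Vol}$ already yields $\mathrm{Vol}(X)=\mathrm{Vol}(X^\vee)$ from your partial isometry.

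However, the theorem asserts that $\tilde\mu$ is an isometry on \emph{all} of $H^2(X)$, and precisely because the paper does not assume degree zero, the direction transverse to $c_1(E)^\perp$ is not vacuous. Your proposal stops exactly there: you conjecture the cancellation but do not verify it, and that verification is the substance of the paper's proof, not a loose end. The concrete ingredient you are missing is a formula for the $H^4$-component of $FM(\alpha)$. Since $FM$ preserves the Mukai pairing and $([X]^*,\alpha)=0$ for degree reasons, pairing $FM(\alpha)$ against $FM([X]^*)=v(\mathcal{E}^\vee|_x)$ gives a linear equation which, using $(v(\mathcal{E}|_x),v(\mathcal{E}|_x))=0$ to eliminate $\int_{X^\vee}ch_2(\mathcal{E}|_x)$, solves to
\[
FM(\alpha)_{H^4}=\tfrac{1}{r}\,\tilde\mu(\alpha)\cup c_1(\mathcal{E}|_x)-\tfrac{1}{2r^2}\Bigl(\int_X\alpha\cup c_1(E)\Bigr)\,c_1(\mathcal{E}|_x)^2 .
\]
Substituting this, together with the $H^0$- and $H^2$-components, into $(FM(\alpha),FM(\alpha))=(\alpha,\alpha)$, every term involving $c_1(\mathcal{E}|_x)$ cancels identically --- in particular no separate control of $\int_{X^\vee}c_1(\mathcal{E}|_x)^2$ or of the cross terms is required, contrary to what you anticipate --- and one obtains $\int_{X^\vee}\tilde\mu(\alpha)^2=\int_X\alpha^2$ for every $\alpha\in H^2(X)$. (The cancellation is insensitive to the relative sign between $\tilde\mu(\alpha)$ and $\pi_{H^2}FM(\alpha)$, so your sign convention versus the paper's does not affect the outcome.) As written, your argument establishes the isometry only on a hyperplane, so the main claim remains unproven.
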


\begin{proof}
We shall make use of FM transform on cohomology in Section \ref{Mukaidualreview}.
We can compute the Mukai vector $v(\mathcal{E})= ch(\mathcal{E})\sqrt{Td(X) Td(X^\vee)}$, and $v(\mathcal{E}^\vee)= ch(\mathcal{E}^\vee)\sqrt{Td(X) Td(X^\vee)}$. Since $X$ and $X^\vee$ are K3 surfaces, we know $Td(X)=1+2[X]^*$ and $Td(X^\vee)=1+ 2[X^\vee]^*$. 
	Thus for any $\alpha\in H^2(X)$,
	\[
	\begin{split}
	 FM(\alpha) _{H^0(X^\vee)}&=- \int_X \alpha \cup c_1(E), \\
	 FM(\alpha) _{H^2(X^\vee)}&=  \alpha\cup ch_2(\mathcal{E^\vee})/[X]=-\tilde{\mu}(\alpha)  +\frac{1}{r}(\int_X c_1(E)\wedge \alpha)c_1(\mathcal{E}|_x)
	.
	\end{split}
	\]
Here $\tilde{\mu}$ is the Donaldson $\mu$-map.	To calculate $FM(\alpha)_{H^4}$, we observe the FM transform of the fundamental class of $X$ is
\[
FM([X]^*)= v(\mathcal{E}^\vee)\cup [X]^*/[X]= v(\mathcal{E}^\vee|_x) \in H^*(X^\vee), \quad \forall x\in X
\]
so the Mukai pairing
\[
\begin{split}
0&=-([X]^*, \alpha)=-(FM([X]^*), FM(\alpha) )\\
&=FM(\alpha)_{H^0} (\int_{X^\vee} ch_2(\mathcal{E}|_x)+r)+\int_{X^\vee} FM(\alpha)_{H^2}\cup c_1(\mathcal{E}|_x)+ r \int_{X^\vee} FM(\alpha)_{H^4}\\
&=FM(\alpha)_{H^0} \frac{1}{2r}(\int_{X^\vee} c_1(\mathcal{E}|_x)^2)+\int_{X^\vee} FM(\alpha)_{H^2}\cup c_1(\mathcal{E}|_x)+ r \int_{X^\vee} FM(\alpha)_{H^4}\\
&= r \int_{X^\vee} FM(\alpha)_{H^4}- \int_{X^\vee}\tilde{\mu}(\alpha)\cup c_1(\mathcal{E}|_x) + \frac{1}{2r} (\int_X\alpha\cup c_1(E)) \int_{X^\vee} c_1(\mathcal{E}|_x)^2,
\end{split}
\]
hence
\[
FM(\alpha)_{H^4}=\frac{1}{r} \tilde{\mu}(\alpha)\cup c_1(\mathcal{E}|_x) -\frac{1}{2r^2}(\int_X\alpha \cup c_1(E)) c_1(\mathcal{E}|_x)^2.
\]

The $\mu$-map is an isometry because 
\[
\begin{split}
\int_X \alpha^2&=
(\alpha, \alpha)= (FM(\alpha), FM(\alpha) )\\
&= FM(\alpha)_{H^2}^2- 2 FM(\alpha)_{H^0} FM(\alpha)_{H^4}= \int_{X^\vee} \tilde{\mu}(\alpha)^2,
\end{split}
\]	
where the last step is proved by substituting the expressions for components of $FM(\alpha)$ and cancelling out terms.

Since
$\omega_i$ and $\omega_i^{X^\vee}$ are related by the $\mu$-map, their volumes are equal.
\end{proof}

\begin{rmk}
When $r=2$, the integral $q(\alpha)=\int_{X^\vee} \tilde{\mu}(\alpha)^2$
 is one of  Donaldson's polynomial invariants for K3 surfaces, and this volume computation is just the higher rank generalisation.
\end{rmk}

\subsection{ASD connections on $X^\vee$ and hyperk\"ahler structure}\label{ASDconnectionsonMukaidual}

The universal $U(r)$ connection $\nabla^{univ}$ on $\mathcal{E}\to X\times X^\vee$ in Theorem \ref{universalconnectionmainproperties} induces a family of HYM connections (or equivalently, ASD $PU(r)$ connections) on  $E'\simeq \mathcal{E}^\vee|_x\to X^\vee$ parametrised by $x\in X$, where $E'\to X^\vee$ denotes the underlying Hermitian bundle. The reason for considering the dual bundle is to be compatible with the inverse Fourier-Mukai transforms (\cf Section \ref{Mukaidualreview}). The goal of this Section is

\begin{thm}\label{doubledualityhyperkahlerforms}
If all these HYM connections are irreducible, then $X$ is the moduli space $\mathcal{M}^s(v(E')))$, and the hyperk\"ahler structure on $X$ induced from the moduli interpretation agrees with $(g, \omega_i)$. 
\end{thm}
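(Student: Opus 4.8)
The plan is to separate the statement into two assertions and prove them in turn. Write $\phi\colon X\to\mathcal{M}^s(v(E'))$ for the classifying map $x\mapsto[\mathcal{E}^\vee|_x]$ of the family of HYM connections on $X^\vee$ built from $\nabla^{univ}$. The first assertion is that $\phi$ is a diffeomorphism onto the moduli space; the second is that $\phi$ intertwines the two hyperk\"ahler structures. Both rest on the symmetry of the universal construction under interchanging the two factors of $X\times X^\vee$ together with dualisation $\mathcal{E}\leftrightarrow\mathcal{E}^\vee$, so that every computation of Section~\ref{HyperkahlerperiodsontheMukaidualK3} has a mirror image with $X$ and $X^\vee$ swapped.

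For the identification I would first note that by hypothesis each $\mathcal{E}^\vee|_x$ is irreducible HYM, hence stable, so $\phi$ indeed lands in $\mathcal{M}^s(v(E'))$. Since $\mathcal{E}^\vee|_x$ is the Fourier--Mukai transform of the skyscraper sheaf at $x$, whose Mukai vector is the point class with vanishing self-pairing, the fact that $FM$ preserves the Mukai pairing (Theorem~\ref{FourierMukaitransformoncohomologymainperoperties}) gives $(v(E'),v(E'))=0$, so $\mathcal{M}^s(v(E'))$ is again a K3 surface of the same dimension as $X$. Bijectivity of $\phi$ is exactly the differential-geometric form of Fourier--Mukai inversion (the double duality $(X^\vee)^\vee\cong X$), which I would import from Theorem~\ref{FourierMukaitransformoncohomologymainperoperties}. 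At the infinitesimal level, differentiating the family in a direction $v\in T_xX$ produces the $ad(E')$-valued $1$-form $\iota_v\Omega$ on $X^\vee$, with $\Omega$ the mixed curvature of (\ref{definitionofOmega}); the tangent map $d\phi(v)$ is its harmonic (ASD and Coulomb-gauge) projection, and injectivity of $d\phi$ follows from the cross-term in (\ref{universalbundlecurvature}), which forces $\iota_v\Omega$ to be gauge-nontrivial when $v\neq0$.

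For the agreement of hyperk\"ahler structures I would use Calabi--Yau uniqueness to bypass a direct pointwise computation. The quotient symplectic form on $\mathcal{M}^s(v(E'))$ coming from the hyperk\"ahler structure of $X^\vee$ is $\omega_i^{\mathcal{M}}(b_1,b_2)=\frac{-1}{4\pi^2}\int_{X^\vee}\Tr(b_1\wedge b_2)\wedge\omega_i^{X^\vee}$, and substituting $b_j=\iota_{v_j}\Omega$ would, in a direct approach, reduce $\phi^*\omega_i^{\mathcal{M}}$ to $-\frac{1}{4\pi^2}\int_{X^\vee}\Tr(\iota_{v_1}\Omega\wedge\iota_{v_2}\Omega)\wedge\omega_i^{X^\vee}$, the mirror of the integral collapsed in the proof of (\ref{Donaldsonmumaphyperkahler}). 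Rather than evaluate this, I would argue: the triholomorphy of $\nabla^{univ}$ (Theorem~\ref{universalconnectionmainproperties}) makes $\phi$ holomorphic for each complex structure $I_i$, so $\phi^*g^{\mathcal{M}}$ is Ricci-flat K\"ahler with respect to $I_i$, as is $g$; moreover the cohomological content of Theorem~\ref{Mukaidualhyperkahlertriple}, that the $\mu$-map is an isometry, combines with Fourier--Mukai inversion to show the mirror $\mu$-map is its inverse, whence $\phi^*[\omega_i^{\mathcal{M}}]=[\omega_i]$ in $H^2(X)$. Two Ricci-flat K\"ahler metrics on a K3 with the same complex structure and the same K\"ahler class coincide by Yau's theorem, so $\phi^*g^{\mathcal{M}}=g$ and therefore $\phi^*\omega_i^{\mathcal{M}}=\omega_i$ for all $i$, giving the full hyperk\"ahler structure.

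The main obstacle is the passage from the isometry of the $\mu$-map to the \emph{exact} equality of K\"ahler classes $\phi^*[\omega_i^{\mathcal{M}}]=[\omega_i]$: the formula for $FM(\alpha)_{H^2}$ derived in the proof of Theorem~\ref{Mukaidualhyperkahlertriple} carries correction terms valued in $H^0\oplus H^4$ proportional to $\int_X\alpha\cup c_1(E)$, and one must check these vanish on the hyperk\"ahler triple, using that $\omega_i$ is orthogonal to $c_1(E)$ in the regime where $\nabla^{univ}$ is triholomorphic. If instead one insists on the direct pointwise route, the genuine difficulty migrates to a Parseval/reproducing-kernel identity for the kernel $\Omega$ --- the infinitesimal shadow of Fourier--Mukai inversion --- and, exactly as the author warns after (\ref{Donaldsonmumaphyperkahler}), to the bookkeeping of the convention-sensitive constants ($r$, the factor $4\pi^2$, and the dual-bundle transpose, which leaves $\Tr(\cdot\wedge\cdot)$ unchanged) so that the isometry comes out exact rather than merely conformal.
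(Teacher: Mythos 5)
Your proposal has a genuine gap at its foundation: the identification $X\simeq\mathcal{M}^s(v(E'))$ cannot be ``imported'' from Theorem~\ref{FourierMukaitransformoncohomologymainperoperties}. That theorem is purely a statement about cohomology ($FM$ and $FM^\vee$ are mutually inverse isometries of Hodge structures); it says nothing about points of moduli spaces, and the double duality $(X^\vee)^\vee\cong X$ at the level of spaces is essentially the content of the theorem you are being asked to prove, so invoking it is circular. Your fallback, that injectivity of $d\phi$ ``follows from the cross-term in (\ref{universalbundlecurvature})'', is also unjustified: the cross-term only says that $(\iota_v\Omega)$ evaluated at $(x,\tau)$ on a harmonic representative $a\in T_\tau X^\vee$ equals the pointwise contraction $a(v)(x)$, and nothing a priori prevents every such $a$, for every $\tau$, from vanishing at the single point $x$ in the direction $v$. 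The paper explicitly flags this (``it is not a priori clear that the map from $X$ to the moduli space is an immersion, because infinitesimal variations may be zero'') and obtains nondegeneracy only \emph{a posteriori}. The gap propagates: your second step (Yau uniqueness) presupposes the first, since if $\phi$ is not known to be an immersion then $\phi^*g^{\mathcal{M}}$ is only a degenerate semi-metric, it is not a Ricci-flat K\"ahler metric in any usable sense, and Yau's uniqueness theorem does not apply to it.

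The paper's proof is engineered precisely to break this circle, and the order of the steps is the whole point. It defines the semi-metric $g^{\vee\vee}$ and the forms $\omega_i^{\vee\vee}$ directly by integral formulas, valid with no immersion hypothesis, and then argues: (i) compatibility with all three complex structures (Lemma~\ref{infinitesimalvariationofASDconnectioniscompatiblewithquaternionicaction}) forces $g^{\vee\vee}=fg$ pointwise for a function $f\ge 0$ --- a pointwise quaternionic linear-algebra fact that holds for semi-metrics; (ii) closedness of $\omega_i^{\vee\vee}=f\omega_i$ (Chern--Weil) gives $df\wedge\omega_i=0$, so $f$ is constant; (iii) the cohomological identity $[\omega_i^{\vee\vee}]=\tilde{\mu}^{-1}([\omega_i^{X^\vee}])=[\omega_i]$ (the adjoint-inverse argument you also found) pins $f=1$. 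Only now is $g^{\vee\vee}=g$ known to be positive definite, hence $\phi$ is an immersion, hence a local isomorphism by the dimension count; compactness of the image plus Theorem~\ref{Mukaidualalgebraic} gives surjectivity, and the covering argument (a simply connected K3 cannot nontrivially cover a hyperk\"ahler surface) gives degree one, i.e.\ $X=\mathcal{M}^s(v(E'))$. Your Yau-uniqueness idea is a legitimate alternative to the conformal-factor steps (i)--(ii) \emph{once} immersion is established, but it cannot replace the paper's argument, because immersion is exactly what is in question. Finally, the ``main obstacle'' you raise about $H^0\oplus H^4$ correction terms is moot for the class identity: the relevant classes are computed with $p_1(ad(\mathcal{E}))$, whose traceless nature kills the central corrections without any orthogonality assumption on $c_1(E)$; those corrections only enter the (already established) proof that $\tilde{\mu}$ is an isometry.
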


\begin{rmk}
This means $X$ is also the Mukai dual of $X^\vee$, so $X$ and $X^\vee$ appear on equal footing. We will in fact separate the irreducibility/stability assumption from most of the intermediate arguments.	
\end{rmk}

 We first observe that $X$ has the correct virtual dimension:

\begin{lem}
The Mukai vector of $E'\to X^\vee$ satisfies $(v(E'), v(E'))=0$, so the moduli space $\mathcal{M}^s(v(E'))$ has real virtual dimension $4$.
\end{lem}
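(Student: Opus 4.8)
The plan is to avoid computing the Chern classes of the restricted bundle $E'\simeq\mathcal{E}^\vee|_x$ directly, and instead to read off $(v(E'),v(E'))$ purely formally from the Fourier--Mukai machinery assembled in Section \ref{Mukaidualreview}. The key point is that $v(E')$ is itself the Fourier--Mukai transform of a fundamental class, whose self-pairing is then forced to vanish. Concretely, I would first recall the identity obtained inside the proof of Theorem \ref{Mukaidualhyperkahlertriple}, namely that for every $x\in X$
\[
FM([X]^*)= v(\mathcal{E}^\vee)\cup [X]^*/[X]= v(\mathcal{E}^\vee|_x)=v(E').
\]
This already shows $v(E')$ is independent of the chosen point $x$, as it must be, and exhibits it as $FM([X]^*)$.

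Next I would invoke Theorem \ref{FourierMukaitransformoncohomologymainperoperties}, which asserts that $FM$ preserves the Mukai pairing. Applying this to the fundamental class gives
\[
(v(E'), v(E'))= (FM([X]^*), FM([X]^*))= ([X]^*, [X]^*).
\]
The right-hand side is then immediate from the definition of the Mukai pairing: since $[X]^*\in H^4(X)$ one has $([X]^*)^\vee=[X]^*$, so
\[
([X]^*, [X]^*)= -\int_X [X]^*\cup [X]^*=0,
\]
the integrand sitting in degree $8$, which vanishes on the real $4$-manifold $X$. Hence $(v(E'),v(E'))=0$. Feeding this into the dimension count recalled in Section \ref{Mukaidualreview}, at a stable bundle $\mathcal{M}^s(v(E'))$ is smooth of complex dimension $(v(E'),v(E'))+2=2$, \ie real dimension $4$, as claimed.

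As for difficulty, there is essentially no genuine obstacle; the only point demanding care is the bookkeeping distinction between the Mukai vector of the restriction $\mathcal{E}^\vee|_x$ and the restriction of the total Mukai vector $v(\mathcal{E}^\vee)$ to the slice $\{x\}\times X^\vee$, which differ by Todd-class factors along the fibre. This subtlety is precisely what is absorbed into the slant-product identity $FM([X]^*)=v(\mathcal{E}^\vee|_x)$ borrowed from the proof of Theorem \ref{Mukaidualhyperkahlertriple}, so once that identity is granted the present argument is purely formal. Should one prefer a self-contained route, one could instead compute $c_1(E')=-c_1(\mathcal{E}|_x)$ and the relevant $c_2(E')$ explicitly and check $(v(E'),v(E'))=0$ by hand, but the transform-based argument is shorter and conceptually more transparent.
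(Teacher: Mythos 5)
Your proposal is correct and follows essentially the same route as the paper's own proof: identify $v(E')$ as $FM([X]^*)$ via the slant product, invoke that $FM$ preserves the Mukai pairing to reduce to $([X]^*,[X]^*)=0$, and conclude with the dimension formula $(v,v)+2$. The only cosmetic difference is that you spell out why $([X]^*,[X]^*)=0$ by degree reasons, which the paper leaves implicit.
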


\begin{proof}
The Fourier-Mukai transform on cohomology (\cf Section \ref{Mukaidualreview}) maps the fundamental class $[X]^*\in H^4(X)$ to 
\[
FM( [X]^*  )= [X]^*\cup v(\mathcal{E}^\vee) /[X] = v(E') \in H^*(X^\vee).
\]
Now because the Fourier-Mukai transform preserves the Mukai pairing, we have
\[
( v(E') , v(E')  )=( [X]^*, [X]^*     )   =     0.
\]
The formula for the complex virtual dimension is $(v(E'), v(E') )+2=2$. (\cf Section \ref{Mukaidualreview}). Hence the claim.
\end{proof}

We study the variation of these ASD $PU(r)$ connections on $X^\vee$ as $x\in X$ varies. Let $T'\subset X$ be an open set with coordinates $x_i$, containing a point of interest $x_0\in T'$. We can topologically identify the smooth bundles $\mathcal{E^\vee}|_{x}\to \{x\}\times X^\vee$ for $x\in T'$, as the bundle $E'\to X^\vee$. Then we get a varying family of ASD connections $A_{x}= -\nabla^{univ,t}|_{x}$ parametrised by $x\in T'$, inside an infinite dimensional space modelled on $\Omega^1(X^\vee, ad_0 (E'))$ (\cf Section \ref{hyperkahlerquotientreview}). The minus transpose takes place because we are working with the dual bundle.

The topological identification  can be twisted by gauge transformations. To rigidify the situation, we may use the transposed universal connection $-\nabla^{univ,t}$ to give an infinitesimal trivialisation around the central fibre, so that the derivative $\frac{\partial A_{x}}{\partial x_i} $ at the point $x_0$ agrees with the commutator 
\[
[-\nabla^{univ,t}_{ \frac{\partial }{\partial x_i}     } , -\nabla^{univ,t}  ]= -\iota_{\frac{\partial }{\partial x_i}  } F(\nabla^{univ,t})= -\iota_{\frac{\partial }{\partial x_i}  } \Omega^t\in \Omega^1(X^\vee, ad_0 (E')), \]
where we have restricted the coupled 1-form $-\iota_{\frac{\partial }{\partial x_i}  } F(\nabla^{univ,t})  $ to the central fibre, and only the component $\Omega$ of $F(\nabla^{univ})$ actually contributes. We can regard this commutator as the \textbf{infinitesimal variation of the ASD connections}.

\begin{lem}\label{infinitesimalvariationofASDconnectionisinCoulumbgauge}
The infinitesimal variation $  a_i=-\iota_{\frac{\partial }{\partial x_i}  } \Omega^t \in \Omega^1(X^\vee, ad_0 (E')) $ satisfies the linearised ASD equation $d_{A_{x_0}}^+ a_i=0$, and the Coulumb gauge fixing condition $d_{A_{x_0}}^* a_i=0$.
\end{lem}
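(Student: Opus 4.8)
The plan is to recognise $a_i$ as the genuine derivative of the family of ASD connections $\{A_x\}$ on $X^\vee$, and then to dispatch the two conditions by quite different means. The infinitesimal trivialisation built from $-\nabla^{univ,t}$ was arranged precisely so that $a_i=\partial A_x/\partial x_i$ at $x_0$; that is, $a_i$ is literally the linearisation of the family in the direction $\partial/\partial x_i$, expressed through the mixed curvature as $-\iota_{\partial/\partial x_i}\Omega^t$. With this in hand the two statements decouple.

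The linearised ASD equation is immediate and uses no special feature of the trivialisation. Each $A_x$ is an ASD (equivalently HYM) $PU(r)$ connection on $X^\vee$, since the restriction of the triholomorphic $\nabla^{univ}$ to every $X^\vee$-fibre is ASD, so $F_{A_x}^+=0$ identically in $x$ (working in $ad_0$). Because the self-duality projection is taken with respect to the fixed hyperk\"ahler metric on $X^\vee$, I may differentiate this identity in $x_i$ and invoke the standard linearisation $\partial_{x_i}F_{A_x}=d_{A_{x_0}}a_i$ to conclude $d_{A_{x_0}}^+a_i=\partial_{x_i}(F_{A_x}^+)=0$.

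The Coulomb condition $d_{A_{x_0}}^*a_i=0$ is the substantive part, and this is where the particular choice of trivialisation matters. The plan is to rewrite $d_{A_{x_0}}^*a_i$ through the universal curvature: the divergence over $X^\vee$ becomes the $X^\vee$-codifferential of $\Omega$ contracted in the $X$-direction. The second Bianchi identity for $F(\nabla^{univ})$, decomposed by bidegree in $X$ and $X^\vee$, relates the $X^\vee$-exterior derivative of $\Omega$ to the $X$-exterior derivative of $F(\nabla^{univ})^{X^\vee,X^\vee}$; but this purely antisymmetric relation only controls the \emph{curl} $d_{A_{x_0}}a_i$ (reproducing the ASD statement) and, under a symmetric metric contraction, degenerates to a tautology, so it is blind to the divergence. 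The divergence must therefore be extracted from the defining structure of $\nabla^{univ}$: in the $\mathcal{A}$-directions the universal connection carries the Green's-operator correction $G_A d_A^* a$, so that $\Omega(a,u)=[a-d_A G_A d_A^* a](u)$ is the Coulomb projection of $a$ and reduces to $a(u)$ on the slice $\{d_A^*a=0\}$. This correction is exactly what makes moduli variations transverse to the gauge orbit, and the task is to show the same transversality survives after interchanging the roles of $X$ and $X^\vee$. Concretely I would prove $a_i$ is $L^2(X^\vee)$-orthogonal to every gauge direction $d_{A_{x_0}}\phi$, $\phi\in\Omega^0(X^\vee,ad_0 E')$, by pairing through the mixed-curvature identity and integrating by parts on $X^\vee$, using triholomorphicity of $\nabla^{univ}$ and irreducibility on the $X$-fibres (so that $G_A$ exists and the Hodge decomposition $\Omega^1=\ker d_{A}^*\oplus\operatorname{im}d_{A}$ is available). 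Alternatively, and perhaps more cleanly, I would pass to the spinorial picture in which $(d_{A}^*,d_{A}^+)$ is a Dirac operator on the hyperk\"ahler surface $X^\vee$, so that both conditions together say $a_i\in\ker$, and try to read off harmonicity directly from the $(1,1)$-type structure of $\Omega$ for all three complex structures at once, unifying the two claims.

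The main obstacle is precisely this Coulomb condition, and the difficulty is structural rather than computational. The universal connection was constructed asymmetrically, with $X$ as base and connections on $X$ as the moduli parameter, so the harmonic-gauge correction is built into the $X^\vee$-directions only. Establishing $d_{A_{x_0}}^*a_i=0$ amounts to verifying that the construction is nevertheless symmetric under $X\leftrightarrow X^\vee$ -- a manifestation of the very Mukai/Nahm duality this section is building -- and this symmetry, being invisible to the Bianchi identity, is the crux that the computation must confront.
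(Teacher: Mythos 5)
Your treatment of the linearised ASD equation is fine and is exactly what the paper does: differentiate the identity $F_{A_x}^+=0$ in $x_i$ to get $d_{A_{x_0}}^+a_i=0$. The gap is in the Coulomb condition, which you correctly identify as the substantive part but never actually prove. Your ``concrete'' plan --- showing $a_i$ is $L^2(X^\vee)$-orthogonal to every gauge direction $d_{A_{x_0}}\phi$ --- is, after one integration by parts, literally a restatement of $d_{A_{x_0}}^*a_i=0$, not a method for proving it; and the mechanism that is supposed to produce this orthogonality (``pairing through the mixed-curvature identity, using triholomorphicity and irreducibility'') is left as a gesture. The spinorial alternative is likewise not carried out.

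The idea you are missing is that, besides the Bianchi identity $d_{A^{univ}}F(\nabla^{univ})=0$ (which, as you rightly say, only sees the curl), the universal connection also satisfies the \emph{Yang-Mills equation}
\[
d_{A^{univ}}^{*}F(\nabla^{univ})=0
\]
on all of $X\times X^\vee$, because a triholomorphic connection on a hyperk\"ahler manifold is automatically Yang-Mills. This is precisely a divergence statement, and it is the structural identity you declared not to exist. Decomposing it by bidegree gives
$d_{A^{univ}}^{X,*}F^{X,X}+d_{A^{univ}}^{X^\vee,*}\Omega=0$ and
$d_{A^{univ}}^{X^\vee,*}F^{X^\vee,X^\vee}+d_{A^{univ}}^{X,*}\Omega=0$;
since the restrictions to the $X$-fibres and $X^\vee$-fibres are ASD, hence fibrewise Yang-Mills, the pure terms $d^{X,*}F^{X,X}$ and $d^{X^\vee,*}F^{X^\vee,X^\vee}$ vanish separately, leaving $d_{A^{univ}}^{X^\vee,*}\Omega=0=d_{A^{univ}}^{X,*}\Omega$. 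Contracting with $\partial/\partial x_i$ (which commutes with $d^{X^\vee,*}$ up to sign) and taking transpose gives $d_{A_{x_0}}^*a_i=0$. Note that this also dissolves the ``structural asymmetry'' you flagged as the crux: the Yang-Mills equation on the total space is symmetric in $X$ and $X^\vee$, so no appeal to the Green's-operator correction in the definition of $\nabla^{univ}$, and no separate transversality argument, is needed.
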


\begin{proof}
Since $a_i$ is traceless, the $PU(r)$ version and the $U(r)$ version of these equations are equivalent. 
The linearised ASD equation follows directly from variation of ASD connections. To show the Coulumb gauge condition, we start from the equation over $X\times X^\vee$,
\begin{equation}
d_{A^{univ}}^* F(\nabla^{univ})=0, 
\end{equation}
which is a consequence of the triholomorphic property. Now recall the decomposition (\ref{typedecompositionofuniversalcurvature}) and (\ref{definitionofOmega}), 
\[
F(\nabla^{univ})=  F(\nabla^{univ})^{X, X} +\Omega+ F(\nabla^{univ})^{X^\vee, X^\vee}.
\]
Similarly, the operator $d_{A^{univ}}^ * $ can be decomposed into 
$
d_{A^{univ}}^ *= d_{A^{univ}}^ {X, *}+d_{A^{univ}}^ { X^\vee, *},
$
corresponding to decreasing the bidegree of forms in the $X$ direction or the $X^\vee$ direction. The Yang-Mills equation decomposes as
\[
d_{A^{univ}}^ {X, *} F(\nabla^{univ})^{X, X}+ d_{A^{univ}}^ { X^\vee, *} \Omega=0,
\]
and 
\[
d_{A^{univ}}^ { X^\vee, *} F(\nabla^{univ})^{X^\vee, X^\vee}+ d_{A^{univ}}^ { X, *} \Omega=0.
\]
We observe that the connection is ASD on each fibre, so the Yang-Mills equation is satisfied, which implies \[d_{A^{univ}}^ { X, *} F(\nabla^{univ})^{X, X}=0, \quad  d_{A^{univ}}^ { X^\vee, *} F(\nabla^{univ})^{X^\vee, X^\vee}=0.\]
Thus in turn we have
\begin{equation}\label{YangMillsoncomponents}
d_{A^{univ}}^ { X^\vee, *} \Omega=0, \quad d_{A^{univ}}^ { X, *}\Omega=0.
\end{equation}
Hence on the central fibre,
\[
d_{A^{univ}}^ { X^\vee, *} \iota_{ \frac{\partial }{\partial x_i}  } \Omega
 = - \iota_{ \frac{\partial }{\partial x_i}  }   d_{A^{univ}}^ { X^\vee, *}\Omega=0 \in \Omega^0(X^\vee, ad_0(E')), 
\]
which is the desired Coulumb gauge condition, after taking transpose.
\end{proof}

\begin{rmk}
It is curious that the Coulumb gauge condition on the fibre is related to the Yang-Mills condition on the total space.
\end{rmk}

The infinitesimal variation of ASD connections behaves well under quaternionic actions:

\begin{lem}\label{infinitesimalvariationofASDconnectioniscompatiblewithquaternionicaction}
For any choice of complex structure $I_k$, The infinitesimal variation induced by $I_k \frac{\partial}{\partial x_i}$ is $I_k a_i$, where by $I_k a_i$ we mean the negative of precomposition by $I_k$.
\end{lem}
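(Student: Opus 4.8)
The plan is to read the claim off the triholomorphic property of $\nabla^{univ}$, which forces a skew-relation between $I_k$ and the contraction operator. By construction, the infinitesimal variation in the direction $I_k\frac{\partial}{\partial x_i}\in T_{x_0}X$ is produced by the very same recipe that gave $a_i$, namely by contracting $\Omega^t$ against the chosen tangent vector; hence it equals $-\iota_{I_k\frac{\partial}{\partial x_i}}\Omega^t$. So the entire content of the lemma is the identity of $ad_0(E')$-valued $1$-forms on $X^\vee$
\[
\iota_{I_k\frac{\partial}{\partial x_i}}\Omega = I_k\bigl(\iota_{\frac{\partial}{\partial x_i}}\Omega\bigr),
\]
where on the right $I_k$ acts on the $X^\vee$ $1$-form leg by the negative of precomposition, as fixed in Section \ref{hyperkahlerquotientreview}.

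The key input is that $F(\nabla^{univ})$ is of type $(1,1)$ for the product complex structure $J_k=I_k^X\oplus I_k^{X^\vee}$ on $X\times X^\vee$. Writing the $(1,1)$ condition as the $J_k$-invariance $F(J_kV,J_kW)=F(V,W)$ and testing it against a pure $X$-vector $u\in T_xX$ and a pure $X^\vee$-vector $a\in T_AX^\vee$ --- for which only the mixed component $\Omega$ (\cf (\ref{definitionofOmega})) survives --- I would obtain $\Omega(I_ku,I_ka)=\Omega(u,a)$. Replacing $a$ by $I_ka$ and using $I_k^2=-1$ immediately yields the skew-relation $\Omega(I_ku,a)=-\Omega(u,I_ka)$. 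Setting $u=\frac{\partial}{\partial x_i}$ and unwinding the meaning of negative precomposition, this is exactly the displayed identity above.

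Finally I would deal with the transpose. Since $\cdot^t$ acts on the endomorphism factor while $I_k$ acts on the $1$-form factor, the two operations commute; applying $\cdot^t$ and the overall sign to the identity gives $-\iota_{I_k\frac{\partial}{\partial x_i}}\Omega^t = I_k\bigl(-\iota_{\frac{\partial}{\partial x_i}}\Omega^t\bigr)=I_ka_i$, which is the assertion. There is no serious analytic obstacle here; the only point requiring care is the bookkeeping of signs, namely reconciling the $(1,1)$-invariance of the mixed curvature $\Omega$ with the convention that $I_k$ acts on coupled $1$-forms by the \emph{negative} of precomposition (Section \ref{hyperkahlerquotientreview}), and verifying that the transpose genuinely commutes with this action.
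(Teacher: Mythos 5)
Your proof is correct and takes essentially the same route as the paper: the paper's own one-line computation is precisely the skew relation $\Omega(I_k u, a)=-\Omega(u, I_k a)$ (quoted there simply as ``the triholomorphic property of $\Omega$''), followed by the negative-precomposition convention and the transpose, exactly as you do. The only difference is that you additionally justify the skew relation by unwinding the $(1,1)$ condition as $J_k$-invariance of the mixed curvature, a step the paper treats as immediate.
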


\begin{proof}
We compute using the triholomorphic property of $\Omega$ as follows:
\[
\begin{split}
\iota_{ I_k \frac{\partial}{\partial x_i} } \Omega =\Omega ( I_k \frac{\partial}{\partial x_i} , \cdot{}   )=\Omega( \frac{\partial}{\partial x_i}, -I_k(\_)          )
= - \iota_{  \frac{\partial}{\partial x_i} } \Omega \circ I_k.
\end{split}
\]
We then take the transpose to see the result.
\end{proof}

We interprete the family of ASD instantons on $E'\to X^\vee$ as giving a map from $X$ to the space of $PU(r)$ connections on $E'\to X^\vee$, modulo gauge equivalence classes. We can compare this with the the hyperk\"ahler quotient construction in Section \ref{hyperkahlerquotientreview}. The optimal hope is to identify $X$ with the moduli space $\mathcal{M}^s(v(E'))$ of ASD instantons on $E'\to X^\vee$ with its hyperk\"ahler structure. There are several issues to this. First, in general these ASD instantons may not be irreducible, to ensure the smoothness of moduli space. Second, it is not a priori clear that the map from $X$ to the moduli space is an immersion, because infinitesimal variations may be zero. 

Despite these issues, we can nevertheless write down the (semi)-metric and the triple of 2-forms on $X$ induced from the moduli space picture, since Lemma \ref{infinitesimalvariationofASDconnectionisinCoulumbgauge} already puts us in the appropriate gauge fixing condition. (Compare with Section \ref{hyperkahlerquotientreview}). The (semi)-metric is given by 
\begin{equation}
g^{\vee\vee}(\frac{\partial}{\partial x_i},\frac{\partial}{\partial x_j} )=\frac{1}{4\pi^2}\int_{X^\vee} \langle \iota_{\frac{\partial}{\partial x_i}} \Omega^t , \iota_{\frac{\partial}{\partial x_j}} \Omega^t \rangle d\text{Vol}_{X^\vee},
\end{equation}
where the pointwise inner product of coupled 1-forms on $X^\vee$ is given by combining the negative of the trace pairing on the bundle part, and the inner product on the 1-form part. This is clearly smooth and semi-positive definite, but not a priori known to be positive definite. The corresponding triple of 2-forms $\omega^{\vee\vee}_i$ are defined by the requirement
\[
\omega_k^{\vee\vee}(\_, \_)= g^{\vee\vee}( I_k\_, \_  ).
\]
More explicitly, we can write down
\[
\begin{split}
\omega_k^{\vee\vee}(\frac{\partial}{\partial x_i},\frac{\partial}{\partial x_j} )&=\frac{-1}{4\pi^2}\int_{X^\vee} \Tr ( \iota_{\frac{\partial}{\partial x_i}} F(\nabla^{univ,t}) \wedge \iota_{\frac{\partial}{\partial x_j}} F(\nabla^{univ,t}) ) \wedge \omega_k^{X^\vee}  \\
& =\frac{-1}{4\pi^2}\int_{X^\vee} \Tr ( \iota_{\frac{\partial}{\partial x_i}} \Omega \wedge \iota_{\frac{\partial}{\partial x_j}} \Omega )\wedge \omega_k^{X^\vee} 
.
\end{split}
\]
We can write more concisely by wedging $dx_i\wedge dx_j$ and sum up:
\begin{equation}\label{hyperkahlerformdoubledual}
\omega_k^{\vee\vee}=\frac{1}{8\pi^2}\int_{X^\vee} \Tr (  F(\nabla^{univ,t}) \wedge  F(\nabla^{univ,t}) ) \wedge \omega_k^{X^\vee}. 
\end{equation}
Here we pick up another minus sign when we commute $dx_j$ with the coupled 1-form $\iota_{\frac{\partial}{\partial x_i}} F(\nabla^{univ,t}) $, and an extra factor $\frac{1}{2}$ because \[\omega_k^{\vee\vee}=\frac{1}{2}\sum_{i,j}\omega_k^{\vee\vee}(\frac{\partial}{\partial x_i},\frac{\partial}{\partial x_j}) dx_i\wedge dx_j  .\]

\begin{lem}
The triple of 2-forms $\omega^{\vee\vee}_i$ are closed.	
\end{lem}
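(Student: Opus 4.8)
The plan is to read the right-hand side of (\ref{hyperkahlerformdoubledual}) as the integration along the fibre of a \emph{globally} closed form on the total space $X\times X^\vee$, and then to use the fact that fibre integration over a closed (compact, boundaryless) fibre commutes with the exterior derivative. Concretely, set
\[
\eta_k=\frac{1}{8\pi^2}\,\Tr\bigl(F(\nabla^{univ,t})\wedge F(\nabla^{univ,t})\bigr)\wedge\omega_k^{X^\vee},
\]
regarded as a $6$-form on $X\times X^\vee$, where $\omega_k^{X^\vee}$ is understood as the pullback of the fixed hyperk\"ahler form from the $X^\vee$ factor. Then $\omega_k^{\vee\vee}=(pr_X)_*\eta_k$, the pushforward along the fibres of $pr_X:X\times X^\vee\to X$, which is exactly the meaning of $\int_{X^\vee}$ in (\ref{hyperkahlerformdoubledual}).

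First I would verify that $\eta_k$ is closed. The factor $\Tr(F\wedge F)$ is a Chern--Weil form for the genuine global connection $\nabla^{univ,t}$ on $\mathcal{E}^\vee\to X\times X^\vee$ (the transpose of the universal connection of Theorem \ref{universalconnectionmainproperties}); since $d\Tr(F\wedge F)=\Tr(d_\nabla F\wedge F)+\Tr(F\wedge d_\nabla F)$ and the Bianchi identity gives $d_\nabla F=0$, this factor is closed. The second factor $\omega_k^{X^\vee}$ is the pullback of a closed form, hence closed. Therefore $d\eta_k=0$. Second, since the fibre $X^\vee$ is compact without boundary, integration along the fibre commutes with $d$ up to a sign, $d\circ(pr_X)_*=(-1)^{\dim X^\vee}(pr_X)_*\circ d$, with no boundary contribution. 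Combining, $d\omega_k^{\vee\vee}=\pm(pr_X)_*(d\eta_k)=0$, as desired.

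The computation is short, so the statement stands or falls on two points of care rather than on any hard estimate. The first, and the one I expect to be the genuine obstacle, is confirming that $\Tr(F\wedge F)$ is an honest closed form on the \emph{whole} total space $X\times X^\vee$, not merely a fibrewise object: this relies on $\nabla^{univ}$ being the globally defined Hermitian connection produced in Theorem \ref{universalconnectionmainproperties}, and on $\omega_k^{X^\vee}$ being the pullback of the fixed hyperk\"ahler form, independent of the base point $x\in X$, so that $\eta_k$ is globally smooth and the push--pull formula applies with no defect. The second, purely bookkeeping, point is the sign $(-1)^{\dim X^\vee}$ in the fibre-integration Stokes formula, which is irrelevant here because $d\eta_k$ already vanishes identically.
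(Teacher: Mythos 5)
Your proof is correct and follows essentially the same route as the paper: the integrand $\Tr(F(\nabla^{univ,t})\wedge F(\nabla^{univ,t}))\wedge\omega_k^{X^\vee}$ is closed on $X\times X^\vee$ by Chern--Weil theory (Bianchi identity), and fibre integration over the compact fibre preserves closedness. The extra care you take about the global smoothness of the Chern--Weil form and the sign in the fibre-integration formula is sound but only makes explicit what the paper's two-line proof leaves implicit.
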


\begin{proof}
We notice the trace of curvature terms are closed on $X\times X^\vee$ by Chern-Weil theory. Thus the integrand $\Tr (  F(\nabla^{univ,t}) \wedge  F(\nabla^{univ,t}) ) \wedge \omega_i^{X^\vee}$ is closed on $X\times X^\vee$, and the integration on fibres preserves this closedness.
\end{proof}

\begin{lem}
The hyperk\"ahler structure $(g^{\vee\vee}, \omega_i^{\vee\vee})$ agrees with $(g, \omega_i)$.
\end{lem}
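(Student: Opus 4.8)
The plan is to avoid any direct pointwise evaluation of the fibre integral in (\ref{hyperkahlerformdoubledual}) and instead reduce the equality of the two hyperk\"ahler structures to two pieces of soft information that are already within reach: agreement of the complex structures, and agreement of the cohomology classes of the K\"ahler forms. First I would record that, by Lemma \ref{infinitesimalvariationofASDconnectioniscompatiblewithquaternionicaction}, the identification $\frac{\partial}{\partial x_i}\mapsto a_i=-\iota_{\frac{\partial}{\partial x_i}}\Omega^t$ intertwines each original complex structure $I_k$ on $T_{x_0}X$ with the precomposition action $I_k$ on coupled $1$-forms over $X^\vee$, i.e. with the complex structure of the moduli space. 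Since precomposition by $I_k$ preserves the pointwise inner product on $X^\vee$, the tensor $g^{\vee\vee}$ is Hermitian for every $I_k$; the quaternionic relations then force, by pure linear algebra, that $\omega_2^{\vee\vee}+\sqrt{-1}\,\omega_3^{\vee\vee}$ is of type $(2,0)$ with respect to $I_1$ (and cyclically for $I_2,I_3$). As it is also closed by the preceding Lemma, and a closed $(2,0)$-form on a complex surface is automatically $\bar\partial$-closed, it is a \emph{holomorphic} $2$-form for $I_1$. Crucially, none of this requires $g^{\vee\vee}$ to be nondegenerate.

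Next I would pin down the cohomology class. Since the dual connection has curvature $-F^t$, the Chern--Weil integrand is unchanged, $\Tr(F(\nabla^{univ,t})\wedge F(\nabla^{univ,t}))=\Tr(F(\nabla^{univ})\wedge F(\nabla^{univ}))$, so (\ref{hyperkahlerformdoubledual}) is literally the computation of the Proposition of Section \ref{HyperkahlerperiodsontheMukaidualK3}, equation (\ref{Donaldsonmumaphyperkahler}), carried out with the roles of $X$ and $X^\vee$ exchanged. Reading off its cohomological output gives $[\omega_k^{\vee\vee}]=-\frac{1}{2r}p_1(ad(\mathcal{E}))\cup[\omega_k^{X^\vee}]/[X^\vee]$, that is, the \emph{reverse} $\mu$-map applied to $[\omega_k^{X^\vee}]$. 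Since the forward $\mu$-map already produces $[\omega_k^{X^\vee}]$ from $[\omega_k]$, it remains to verify that the reverse $\mu$-map inverts the forward one on these classes. Under the orthogonality hypotheses of Theorem \ref{universalconnectionmainproperties} (the $\omega_k$ being orthogonal to $c_1(E)$ and the $\omega_k^{X^\vee}$ to $c_1(\mathcal{E}|_x)$), the $\mu$-map agrees up to sign with $FM$ on the triholomorphic classes, exactly as in the proof of Theorem \ref{Mukaidualhyperkahlertriple}, and the reverse $\mu$-map likewise agrees with $FM^\vee$. Fourier--Mukai inversion (Theorem \ref{FourierMukaitransformoncohomologymainperoperties}) then yields $[\omega_k^{\vee\vee}]=[\omega_k]$ for each $k$.

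Finally I would upgrade cohomology to forms. The two holomorphic $2$-forms $\omega_2^{\vee\vee}+\sqrt{-1}\,\omega_3^{\vee\vee}$ and $\omega_2+\sqrt{-1}\,\omega_3$ for $I_1$ now lie in the same class; since holomorphic $2$-forms on a compact K\"ahler surface are $g$-harmonic and the harmonic representative of a class is unique, they coincide as forms, whence $\omega_2^{\vee\vee}=\omega_2$ and $\omega_3^{\vee\vee}=\omega_3$. Running the same argument with $I_2$ gives $\omega_1^{\vee\vee}=\omega_1$, and the metric is recovered from the triple via $g^{\vee\vee}(\cdot,\cdot)=\omega_1^{\vee\vee}(\cdot,I_1\cdot)=g(\cdot,\cdot)$, which is in particular nondegenerate. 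The genuine obstacle is precisely this last upgrade: the tempting route of comparing harmonic representatives of $\omega_k^{\vee\vee}$ and $\omega_k$ directly is circular, since harmonicity is measured against the very metric $g^{\vee\vee}$ we are trying to identify, and a purely cohomological match cannot by itself distinguish two Ricci-flat metrics without invoking Yau. Routing the comparison through the holomorphic symplectic form sidesteps this, because its harmonicity is automatic once the complex structure $I_1$ is known to agree --- which is exactly what Lemma \ref{infinitesimalvariationofASDconnectioniscompatiblewithquaternionicaction} supplies. (A direct pointwise verification that the $X^\vee$-integral of $\langle a_i,a_j\rangle$ equals $g(\frac{\partial}{\partial x_i},\frac{\partial}{\partial x_j})$ would instead amount to re-proving the $L^2$-isometry property of the Nahm transform, which is considerably heavier.)
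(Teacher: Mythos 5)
Your proposal shares the paper's three essential inputs --- Lemma \ref{infinitesimalvariationofASDconnectioniscompatiblewithquaternionicaction}, the closedness of the forms $\omega_k^{\vee\vee}$, and the identification of $[\omega_k^{\vee\vee}]$ as the reverse $\mu$-map applied to $[\omega_k^{X^\vee}]$ --- but your mechanism for upgrading cohomological equality to equality of tensors is genuinely different. The paper argues pointwise that any two (semi-)metrics compatible with the same quaternionic action are proportional, $g^{\vee\vee}=fg$, then uses closedness of $\omega_k^{\vee\vee}=f\omega_k$ to get $df\wedge\omega_k=0$, hence $f$ constant, and finally $f=1$ from cohomology. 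You instead pass through the holomorphic symplectic form: $\omega_2^{\vee\vee}+\sqrt{-1}\,\omega_3^{\vee\vee}$ is a closed $(2,0)$-form for $I_1$, hence holomorphic, hence the unique harmonic representative (with respect to the known K\"ahler metric $g$) of its class. That step is correct, works without nondegeneracy of $g^{\vee\vee}$, and your remark about why this avoids circularity is well taken; it is a clean alternative to the conformal-factor argument.

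The genuine gap is in your cohomological step. You invoke the orthogonality hypotheses of Theorem \ref{universalconnectionmainproperties} ($[\omega_k]\perp c_1(E)$ on $X$ and $[\omega_k^{X^\vee}]\perp c_1(\mathcal{E}|_x)$ on $X^\vee$) to identify $\tilde{\mu}$ with $-FM$ and the reverse $\mu$-map with $-FM^\vee$ on these classes, and then conclude by Fourier--Mukai inversion (Theorem \ref{FourierMukaitransformoncohomologymainperoperties}). But the Lemma is stated, and is later used (e.g.\ in the proof of Lemma \ref{HermitianisometryLemma4}), in the paper's general setting, where the degree of the Mukai vector need not vanish; in Theorem \ref{universalconnectionmainproperties} orthogonality is only a sufficient condition for the $U(r)$ connection to be triholomorphic, not a standing assumption of the paper. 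Without it, $FM([\omega_k])$ acquires nonzero $H^0$ and $H^4$ components, and its $H^2$ component is $-\tilde{\mu}([\omega_k])+\frac{1}{r}(\int_X c_1(E)\wedge\omega_k)\,c_1(\mathcal{E}|_x)$ (see the proof of Theorem \ref{Mukaidualhyperkahlertriple}), so the identification $\tilde{\mu}=\pm FM$ fails and inversion does not directly yield $[\omega_k^{\vee\vee}]=[\omega_k]$. The repair is the paper's own argument, which needs no orthogonality: the reverse $\mu$-map is the adjoint of $\tilde{\mu}$ with respect to the intersection forms, since both are slant products against the same symmetric kernel $-\frac{1}{2r}p_1(ad(\mathcal{E}))$ on $X\times X^\vee$; and since $\tilde{\mu}$ is an isometry by Theorem \ref{Mukaidualhyperkahlertriple} (whose proof is carried out in full generality), its adjoint equals its inverse, giving $[\omega_k^{\vee\vee}]=\tilde{\mu}^{-1}(\tilde{\mu}([\omega_k]))=[\omega_k]$. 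With this substitution, the rest of your proof goes through as stated.
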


\begin{proof}
Both $g$ and $g^{\vee\vee}$ are compatible with the quaternionic actions of $I_1$, $I_2$, $I_3$, using Lemma \ref{infinitesimalvariationofASDconnectioniscompatiblewithquaternionicaction}. Such metrics are determined up to a scalar function $f$, so $g^{\vee\vee}=fg$ is conformal to $g$. Now $\omega_i^{\vee\vee}=f\omega_i$ is closed, so $df\wedge \omega_i=0$, which implies $df=0$, hence $f$ is a constant.

To pin down the constant requires some input from cohomology. By a computation completely analogous to (\ref{Donaldsonmumap}), 
\[
[\omega_i^{\vee\vee}]= -\frac{1}{2r} p_1(ad(\mathcal{E}))\cup [\omega_i^{X^\vee}]/[X^\vee]
\]
is given by the adjoint of Donaldson's $\mu$-map $\tilde{\mu}$ acting on $[\omega_i^{X^\vee}]$. Since $\tilde{\mu}$ is an isometry, its adjoint is its inverse, so $[\omega_i^{X^\vee}]=\tilde{\mu}^{-1}([\omega_i^{X^\vee}])=[\omega_i]$, hence the constant $f=1$.
\end{proof}

We now prove Theorem \ref{doubledualityhyperkahlerforms}.

\begin{proof}
Since $g^{\vee\vee}$ is positive definite, we see a posteriori that the infinitesimal variation is nowhere vanishing, so the tautological map $X\to \mathcal{M}^s(v(E'))$ is an immersion. Since $X$ has the right virtual dimension, this is a local isomorphism once we assume irreducibility/stability. Thus $\mathcal{M}^s(v(E'))$ contains a compact component, namely the image of $X$, so $\mathcal{M}^s(v(E'))$ is the image by Theorem \ref{Mukaidualalgebraic}. But $\mathcal{M}^s(v(E'))$ is a hyperk\"ahler surface, so cannot be an unramified quotient of $X$. We see $X=\mathcal{M}^s(v(E'))$.

The hyperk\"ahler structure induced by the moduli interpretation is precisely $(g^{\vee\vee}, \omega_i^{\vee\vee})$, so agrees with $(g, \omega_i)$.
\end{proof}

\section{The Nahm transform on K3 surfaces}\label{TheNahmtransformonK3surfaces}

The aim of this Chapter is to develop the theory of the Nahm transform (a.k.a the Fourier-Mukai transform) on K3 surfaces in the differential geometric setting, by adapting the work of Braam and Baal \cite{BraamBaal} on the Nahm transform over the 4-torus. There are two major technical difficulties: the non-flat nature of the ambient metric, and the non-explicit nature of the universal connection.

Section \ref{Nahmtransform} defines the \textbf{Nahm transform} for irredubible HYM connections on bundles  of certain slopes over the hyperk\"ahler K3 surface, gives a brief comparison with the algebraic viewpoint, and then proceed to use spinor methods to show that the \textbf{Hermitian Yang-Mills} property is preserved by the Nahm transform. To prove this, we use a formula of the curvature of the transformed connection in terms of certain Green operators, as in \cite{BraamBaal}, and the heart of the argument is to introduce the natural action of complex structure operators on positive spinors (\cf the Appendix), which commute with the Green operators. This allows us to reduce the proof to a local calculation, where the key input is the properties of the universal connection on $\mathcal{E}\to X\times X^\vee$ from Theorem \ref{universalconnectionmainproperties}. 

The rest of the Chapter aims to prove the \textbf{Fourier inversion theorem} (\cf \ref{Fourierinversiontheorem}) for the Nahm transform by differential geometric calculations, and is by far the most difficult part of this paper. This means the Nahm transform suitably applied twice (the `\textbf{inverse Nahm transform}') would give back the original bundle with its HYM connection. A technical feature is that a large amount of cancellation effects take place to remove many complicated expressions.

Section \ref{inverseNahmtransformcomparisonmap} defines a \textbf{canonical comparison map} between the original bundle and the inverse Nahm transform, built out of Green operators and curvature operators on coupled spinors. We then show this is well defined, namely that the map lands in the correct target, by verifying a coupled Dirac equation.

Section \ref{Injectiveisometry} shows that the canonical comparison map is a  \textbf{Hermitian isometry}. The Hermitian inner product on the inverse Nahm transform takes the form of a correlator type expression, which is converted via functional analytic calculations into the integral of a Laplacian type expression involving some singular Schwartz kernels. The proof then proceeds by deriving delicate asymptotic formulae of the singularity, and evaluating them in the limit.

Section \ref{Comparingtheconnections} \textbf{compares connections} on the original bundle with the inverse Nahm transform, and show that they agree under the canonical comparison map. The proof extends the asymptotic calculation in Section \ref{Injectiveisometry} to higher order.

\subsection{The Nahm transform}\label{Nahmtransform}

The aim of this Section is to translate the Fourier-Mukai transform for stable vector bundles $\mathcal{F}$ with the same slope as $E$ on the K3 surface $X$, 
into the language of the Nahm transform, in analogy with the work of Braam and Baal \cite{BraamBaal} in the 4-torus case. The main result says under certain conditions the HYM condition is preserved under Nahm transform, proved using spinor techniques.

We briefly review the \textbf{algebraic viewpoint} for completeness, which will not be substantially used. Let $(X, g, \omega_1, \omega_2, \omega_3)$ be a hyperk\"ahler K3 surface, and $X^\vee$ a Mukai dual K3 surface, such that there is a universal bundle $\mathcal{E}\to X\times X^\vee$ with a universal connection $\nabla^{univ}$, as in Theorem \ref{universalconnectionmainproperties}. Let $\mathcal{F}$ be a stable holomorphic vector bundle with the same slope as $E$, such that
 $F\not \simeq \mathcal{E}|_\tau$ for any $\tau\in X^\vee$, which by the properties of stable vector bundles implies $\text{H}^0(X,\underline\Hom ( \mathcal{E}|_\tau, \mathcal{F}) )=0 $ and 
 $\text{H}^0(X, \underline\Hom (\mathcal{F}, \mathcal{E}|_\tau))=\text{Ext}^0(\mathcal{F},\mathcal{E}|_\tau)=0$. By Serre duality, we also have $\text{Ext}^2(\mathcal{E}|_{\tau}, \mathcal{F})\simeq \text{Ext}^0( \mathcal{F}, \mathcal{E}|_\tau)^*=0$. The Fourier-Mukai transform on sheaves produces a vector bundle $R^1pr_{X^\vee *}( \mathcal{E}^\vee\otimes pr^*_{X}\mathcal{F}       )$ over $X^\vee$, whose fibres at $\tau \in X^\vee$ are $ \text{H}^1(X, \underline{\Hom}(\mathcal{E}|_{\tau}, \mathcal{F}  ) )\simeq \text{Ext}^1( \mathcal{E}|_{\tau}  , \mathcal{F}   )  $ by the base change theorem. Here $\underline{\Hom}$ denotes the holomorphic $\Hom$ bundle.

From the \textbf{differential geometric viewpoint}, the Fourier-Mukai transform can be understood in terms of the Nahm transform  (Compare \cite{BraamBaal}, Section 1). We think of the stable holomorphic bundle $\mathcal{F}$ equivalently as a vector bundle $\mathcal{F}$ with an irreducible HYM connection $ \alpha$. (This notation is to avoid confusion with connections on $\mathcal{E}$.) Another description which does not favour any complex structure, is that $\alpha$ induces an ASD $PU(rk(\mathcal{F}))$ connection, and the central curvature satisfies
\[
\frac{\sqrt{-1}}{ 2\pi} \Tr F_\alpha=rk(\mathcal{F}) \mathcal{B}_\mathcal{F},
\]
where $\mathcal{B}_\mathcal{F}$ is the harmonic 2-form representing the class $\frac{1}{rk(\mathcal{F})} c_1(\mathcal{F})$. The slope condition is equivalent to 
\[
(\mathcal{B}_\mathcal{F}-\mathcal{B})\wedge \omega_i=0,
\]
where $\mathcal{B}$ is the harmonic representative of $\frac{1}{rk(E)} c_1(E)$ as in Theorem \ref{universalconnectionmainproperties}.

 Let $S^+_X, S^{-} _X\to X$ be the spinor bundles over $X$. This allows us to define the Dirac operators, by coupling $\alpha$ to the universal connection $\nabla^{univ}$ and the Levi-Civita connection on spinors:
\[
\begin{split}
& D^+_{\alpha_\tau  }: \Gamma(X, S^+_X \otimes\Hom( \mathcal{E}|_{\tau}, \mathcal{F}   )  ) \to \Gamma(X, S^-_X \otimes\Hom( \mathcal{E}|_{\tau}, \mathcal{F}   )  ), \\
& D^-_{ \alpha_\tau  }: \Gamma(X, S^-_X \otimes\Hom( \mathcal{E}|_{\tau}, \mathcal{F}   )  ) \to \Gamma(X, S^+_X \otimes\Hom( \mathcal{E}|_{\tau}, \mathcal{F}   )  ).
\end{split}
\]
These two operators are formally adjoint to each other. It is important that the coupled connection $\alpha_\tau$ on $\text{Hom}(\mathcal{E}|_\tau, \mathcal{F})$ is an ASD unitary connection, instead of merely ASD projective unitary connection.

\begin{rmk}
Using the arguments in \cite{DonaldsonKronheimer}  Section 3.2, the Dirac operator is closely related to the Dolbeault complex, and its kernel can be identified with the Dolbeault cohomology groups:
 \[
 \ker D^+_{\alpha_\tau  }\simeq \text{H}^0(X, \underline{\Hom} (\mathcal{E}|_\tau, \mathcal{F}    )    ) \oplus  \text{H}^2(X, \underline{\Hom} (\mathcal{E}|_\tau, \mathcal{F}    )    ) ,  \]
\[
\ker D^-_{\alpha_\tau  }\simeq \text{H}^1(X, \underline{\Hom} (\mathcal{E}|_\tau, \mathcal{F}    )    ) .
\]

\end{rmk}

\begin{lem}\label{vanishingkernellemma}
If the topological type of $\mathcal{F}$ is different from $E$, then $\ker D^+_{\alpha_\tau  } \subset \Gamma(X, S^+_X \otimes\Hom( \mathcal{E}|_\tau, \mathcal{F} )   )$ is zero for all $\tau\in X^\vee$.
\end{lem}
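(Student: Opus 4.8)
The plan is to prove the vanishing by a Bochner--Weitzenb\"ock argument, exploiting that $X$ is Ricci-flat and that the slope hypothesis makes the coupled connection anti-self-dual. Write $W_\tau=\Hom(\mathcal{E}|_\tau,\mathcal{F})$ with its tensor-product connection, coupled to the Levi-Civita connection on spinors. The elements of $\ker D^+_{\alpha_\tau}$ are the harmonic positive spinors, and for the coupled positive Dirac operator on a $4$-manifold one has the Weitzenb\"ock identity
\[
D^-_{\alpha_\tau}D^+_{\alpha_\tau}=\nabla^{*}\nabla+\tfrac{s}{4}+F^{+},
\]
where $s$ is the scalar curvature of $X$ and $F^{+}$ denotes the action on $S^+_X$ (by Clifford multiplication) of the self-dual part of the curvature of $W_\tau$. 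First I would note that $s=0$ because the hyperk\"ahler K3 surface is Ricci-flat, and that $F^{+}=0$: the slope condition $(\mathcal{B}_\mathcal{F}-\mathcal{B})\wedge\omega_i=0$ makes the trace part of the curvature of $W_\tau$ anti-self-dual, while its traceless part is anti-self-dual since both $\mathcal{E}|_\tau$ and $\mathcal{F}$ carry ASD $PU(r)$ connections. Hence on $\ker D^+_{\alpha_\tau}$ the operator reduces to $\nabla^{*}\nabla$, and integrating by parts over the closed manifold $X$ gives $\|\nabla\psi\|_{L^2}^2=0$ for every $\psi\in\ker D^+_{\alpha_\tau}$, so each such $\psi$ is parallel.

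Next I would use that on a hyperk\"ahler surface the holonomy sits in $SU(2)\subset \mathrm{Spin}(4)$ acting trivially on $S^+_X$, so that $S^+_X$ is globally trivialised by two parallel spinors. Decomposing a parallel $\psi$ against this parallel frame identifies $\psi$ with a pair of \emph{parallel} sections of $W_\tau=\Hom(\mathcal{E}|_\tau,\mathcal{F})$, i.e.\ with covariantly constant bundle homomorphisms $s\colon \mathcal{E}|_\tau\to\mathcal{F}$ that intertwine the two connections. Thus it suffices to show that the only such parallel homomorphism is $s=0$.

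The main obstacle, and the step where the hypothesis enters, is this last vanishing. I would argue by a Schur-type lemma: for a parallel $s$, both $s^{*}s\in\Gamma(\End(\mathcal{E}|_\tau))$ and $ss^{*}\in\Gamma(\End(\mathcal{F}))$ are parallel Hermitian endomorphisms, and since the HYM connections on $\mathcal{E}|_\tau$ and on $\mathcal{F}$ are irreducible their only parallel endomorphisms are scalars. Hence $s^{*}s=c\,\mathrm{Id}$ and $ss^{*}=c'\,\mathrm{Id}$ with $c,c'\ge 0$; if $c>0$ then $s$ is fibrewise injective with parallel image, on which $ss^{*}$ is a nonzero scalar, forcing $\mathrm{im}(s)=\mathcal{F}$, so that $s$ is a smooth bundle isomorphism. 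This contradicts the hypothesis that the topological type of $\mathcal{F}$ differs from that of $E$, because $\mathcal{E}|_\tau$ has topological type $v(E)$ for every $\tau\in X^\vee$. Therefore $c=0$, $s=0$, and $\ker D^+_{\alpha_\tau}=0$.

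Finally I would remark that, via the Dolbeault identification in the preceding Remark, this is exactly the statement $H^0(X,\underline{\Hom}(\mathcal{E}|_\tau,\mathcal{F}))=H^2(X,\underline{\Hom}(\mathcal{E}|_\tau,\mathcal{F}))=0$: the $H^0$ vanishing is the non-existence of a nonzero map between stable sheaves of equal slope and distinct topological type, and the $H^2$ vanishing follows from it by Serre duality on $X$ (the canonical bundle $K_X$ being trivial). The Bochner route above is the differential-geometric translation of these two facts, and is the version consistent with the spinorial methods used throughout this Chapter.
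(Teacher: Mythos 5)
Your proof is correct, and its skeleton is exactly the paper's: the coupled Lichnerowicz formula, the vanishing of the scalar curvature and of $F^{+}$ (the latter from the ASD $PU(r)$ parts together with the slope condition making the central curvature of $\Hom(\mathcal{E}|_\tau,\mathcal{F})$ anti-self-dual), the Bochner argument forcing kernel elements to be parallel, and the covariant triviality of $S^+_X$ reducing everything to parallel sections of $\Hom(\mathcal{E}|_\tau,\mathcal{F})$. You diverge only at the last step. The paper kills the parallel sections by citing $\text{H}^0(X,\underline{\Hom}(\mathcal{E}|_\tau,\mathcal{F}))=0$, which was established in the section's setup from stability theory (a nonzero map between stable bundles of equal slope is an isomorphism, excluded since $\mathcal{F}\not\simeq\mathcal{E}|_\tau$). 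You instead run a Schur-type holonomy argument: for a parallel homomorphism $s$, the endomorphisms $s^*s$ and $ss^*$ are parallel Hermitian, hence scalars by irreducibility of the two HYM connections, so a nonzero $s$ would be a bundle isomorphism, contradicting the hypothesis on topological types. Your version is more self-contained on the differential-geometric side, using only irreducibility of the connections rather than the stability/Hitchin--Kobayashi input, and it engages the lemma's stated hypothesis (distinct topological type) directly. The paper's route proves slightly more, since $H^0=0$ needs only $\mathcal{F}\not\simeq\mathcal{E}|_\tau$ as holomorphic bundles; but your argument adapts to that weaker hypothesis too, because a parallel unitary isomorphism intertwining the connections would identify the holomorphic structures as well.
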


\begin{proof}\label{Lichnerowichformulaargument}
By the Lichnerowich formula
\[
 D^-_{ \alpha_\tau  } D^+_{\alpha_\tau  }= \nabla^*_{\alpha_\tau} \nabla_{\alpha_\tau} + F_{\alpha_\tau}^+ + \frac{1}{4}R,
\]
where $ F_{\alpha_\tau}^+$ denotes the action of the curvature tensor on positive spinors, and $R$ is the scalar curvature. On a hyperk\"ahler K3 surface, $R$ is zero, and the ASD condition means $F_{\alpha_\tau}^+=0$. By a Bochner type argument, we see that $\ker D^+_{\alpha_\tau  }$ consists entirely of parallel coupled spinor fields. For a K3 surface $X$, the positive spin bundle $S^+_X$ is covariantly trivial. So using $\text{H}^0(X,\underline\Hom( \mathcal{E}|_{\tau}, \mathcal{F} ) ) =0$, this kernel must vanish.
\end{proof}

The vector spaces $\hat{\mathcal{F} }|_\tau=\ker D^-_{ \alpha_\tau  }$ therefore fit together into a vector bundle $\hat{\mathcal{F}}\to X^\vee$. This can be thought of as a subbundle of the infinite dimensional Hermitian vector bundle $\hat{H}\to X^\vee$, whose fibres are $\hat{H}_\tau= \Gamma( X, S^-_X \otimes \Hom( \mathcal{E}|_{\tau}, \mathcal{F}   )   )$. This bundle $\hat{H}$ has a natural covariant derivative $\hat{d}$, induced by the universal connection on $\mathcal{E}$ in the $X^\vee$ direction. Let $\hat{\alpha}$ be the subbundle connection on $\hat{\mathcal{F}}$, more concretely described by the covariant derivative $\hat{\nabla}=P\hat{d}$, where $P: \hat{H}\to \hat{\mathcal{F}}$ is the $L^2$ projection. Using Hodge theory, the $L^2$ projection operator is expressed as
\[
P=1- D_{\alpha_\tau}^+ G_\tau D^-_{\alpha_\tau}
\]
with $G_\tau= ( D^-_{\alpha_\tau} D_{\alpha_\tau}^+   )^{-1}= ( \nabla^*_{\alpha_\tau}\nabla_{\alpha_\tau}  )^{-1}
$ acting on $\Gamma( X, \Hom ( \mathcal{E}|_\tau, \mathcal{F}   ) \otimes S^+_X  )   $.

\begin{Def}
The \textbf{Nahm transform} of $(\mathcal{F}, \alpha)$ is the pair of vector bundle with connection $(\hat{\mathcal{F}}, \hat{\alpha})$.
\end{Def}

%\begin{rmk}
%One can compute the Chern character of $\mathcal{F}$ using the  Grothendieck-Riemann-Roch theorem, by comparing with the Dolbeault description above, or apply the Atiyah-Singer theorem for families. (\cf also Remark \ref{FourierMukaicohomology}).
%\end{rmk}

We follow \cite{BraamBaal} with some  modifications to show
\begin{thm}\label{NahmtransformperservesASD}
The Nahm transform $(\hat{\mathcal{F}}, \hat{\alpha})$ is also a HYM connection, with the same slope as $E'\to X$ defined in Section \ref{HyperkahlerperiodsontheMukaidualK3}.
\end{thm}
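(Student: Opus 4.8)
The plan is to adapt the curvature computation of Braam and van Baal \cite{BraamBaal} to the present non-flat, non-explicit setting. Since $\hat\nabla = P\hat d$ with $P = 1 - D^+_{\alpha_\tau}G_\tau D^-_{\alpha_\tau}$, the curvature of $\hat\alpha$ on a pair $V, W\in T_\tau X^\vee$ is computed by differentiating the family of Dirac operators along $X^\vee$ and projecting. First I would observe that the sole $\tau$-dependence of $D^\pm_{\alpha_\tau}$ comes from coupling to $\nabla^{univ}$, so that the variation $\hat d_V D^-_{\alpha_\tau}$ is a zeroth-order operator, namely Clifford multiplication by the mixed curvature component $\iota_V\Omega$, with $\Omega = F(\nabla^{univ})^{X,X^\vee}$ as in (\ref{definitionofOmega}); the identity $\Omega(a,v)=\langle a,v\rangle$ from (\ref{universalbundlecurvature}) is the decisive structural input. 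Carrying this through the standard manipulation of the second fundamental form, the curvature acquires the schematic Braam--van Baal shape
\[
F_{\hat\alpha}(V,W) = P\,\bigl[\,c(\iota_V\Omega)\,G_\tau\,c(\iota_W\Omega)^\ast - c(\iota_W\Omega)\,G_\tau\,c(\iota_V\Omega)^\ast\,\bigr]\,P + (\text{ambient term}),
\]
where $c(\cdot)$ denotes Clifford multiplication and $G_\tau = (\nabla^\ast_{\alpha_\tau}\nabla_{\alpha_\tau})^{-1}$. The ambient term comes from the curvature of $\hat d$ along $X^\vee$, governed by $F(\nabla^{univ})^{X^\vee,X^\vee}$, which is already ASD on the $X^\vee$ fibres by triholomorphicity.

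To verify that the induced $PU(r)$ connection is ASD, it suffices to show that the contraction of $F_{\hat\alpha}$ with each self-dual form $\omega_i^{X^\vee}$ vanishes. Here the key device is the action of the complex structure operators $\mathcal{I}_i$ on positive spinors developed in the Appendix. Two facts drive the argument. First, because $\mathcal{I}_i$ is built from the parallel hyperk\"ahler data on $X$ while the coupled Laplacian $\nabla^\ast_{\alpha_\tau}\nabla_{\alpha_\tau}$ is of scalar Weitzenb\"ock type, $\mathcal{I}_i$ commutes with the Green operator $G_\tau$. Second, triholomorphicity of $\nabla^{univ}$ yields $\iota_{I_iV}\Omega = -\iota_V\Omega\circ I_i$ exactly as in Lemma \ref{infinitesimalvariationofASDconnectioniscompatiblewithquaternionicaction}, which at the level of Clifford multiplication relates $c(\iota_{I_iV}\Omega)$ to $\mathcal{I}_i\,c(\iota_V\Omega)$. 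Combining the two, I would pull $\mathcal{I}_i$ across $G_\tau$ and collapse the $\omega_i^{X^\vee}$-contracted sum, reducing the global self-duality statement to a purely pointwise Clifford-algebraic identity on $X$.

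That local identity is then checked fibrewise using $\Omega(a,v)=\langle a,v\rangle$, which identifies $c(\iota_V\Omega)$ with Clifford multiplication by $V$ itself and makes the self-dual combination vanish by the quaternionic relations on $S^\pm_X$; the ambient term is disposed of by the same mechanism. This establishes the ASD property of the projective part. For the central part and the slope claim I would argue cohomologically: the Mukai vector of $\hat{\mathcal{F}}$ is $FM(v(\mathcal{F}))$ by Remark \ref{FourierMukaicohomology}, whereas $v(E') = FM([X]^\ast)$ as in Section \ref{ASDconnectionsonMukaidual}. The hypothesis that $\mathcal{F}$ and $E$ share a slope reads $(\mathcal{B}_\mathcal{F}-\mathcal{B})\wedge\omega_i = 0$; transporting this condition under $FM$ and invoking the isometry of the $\mu$-map from Theorem \ref{Mukaidualhyperkahlertriple} shows that the slopes of $\hat{\mathcal{F}}$ and $E'$ with respect to $\omega_i^{X^\vee}$ agree. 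Hence $\Tr F_{\hat\alpha}$ is harmonic of the prescribed type and $\hat\alpha$ is HYM with the asserted slope.

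I expect the main obstacle to be precisely the two structural facts underlying the reduction: that $\mathcal{I}_i$ commutes with $G_\tau$, and that the $\omega_i^{X^\vee}$-contraction genuinely collapses to a pointwise identity. These are automatic in the flat torus case of \cite{BraamBaal} but here must be extracted from the non-flat geometry and the non-explicit $\nabla^{univ}$: the commutation rests on the scalar Weitzenb\"ock form of the coupled Laplacian together with the covariant triviality of $S^+_X$ on a K3 surface, while the collapse requires careful bookkeeping of how $\mathcal{I}_i$ interacts with the ASD coupled connection $\alpha_\tau$ on $\Hom(\mathcal{E}|_\tau,\mathcal{F})$ across the Green operator.
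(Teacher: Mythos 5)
Your treatment of the trace-free part is essentially the paper's own argument, and it is sound: you put the curvature of $\hat\alpha$ in the Braam--van Baal shape (the paper's (\ref{curvaturematrixNahmtransform})) by differentiating the family of Dirac operators, with the variation given by Clifford multiplication by the mixed curvature $\Omega$ of (\ref{definitionofOmega}), and you verify the ASD property of the Green-operator term by letting $I_k^{S^+}$ act on coupled positive spinors, commuting it across $G_\tau$ (legitimate precisely because $G_\tau$ never touches the spinor factor, $S^+_X$ being covariantly trivial), and invoking triholomorphicity of $\Omega$. One local misstatement: the identity $\Omega(a,v)=\langle a,v\rangle$ from (\ref{universalbundlecurvature}) plays no role in this proof, and $c(\iota_V\Omega)$ is not ``Clifford multiplication by $V$'' --- the endomorphism factor of $\iota_V\Omega$ acts on $\Hom(\mathcal{E}|_\tau,\mathcal{F})$ and only its 1-form factor acts by Clifford multiplication on $S_X$; once $I_k^{S^+}$ has been pulled across $G_\tau$, the contracted sum collapses simply because $I_k^{S^+}$ is unitary, with no further pointwise identity needed.

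The genuine gap is your handling of the central part and the slope. First, the ambient term is \emph{not} ``already ASD on the $X^\vee$ fibres by triholomorphicity'': triholomorphicity makes $F(\nabla^{univ})^{X^\vee,X^\vee}$ only projectively ASD there, and its central part is the harmonic form $r\mathcal{B}'$ of Theorem \ref{universalconnectionmainproperties}, which is not ASD in general since this paper explicitly allows Mukai vectors of nonzero degree. Indeed, were the ambient term genuinely ASD, then $\Tr F_{\hat\alpha}$ would be ASD and $\hat{\mathcal{F}}$ would have slope zero, contradicting the very slope statement you are proving whenever $E'$ has nonzero degree. Second, the inference ``the slopes of $\hat{\mathcal{F}}$ and $E'$ agree in cohomology, hence $\Tr F_{\hat\alpha}$ is harmonic of the prescribed type'' runs backwards: harmonicity of the trace of the curvature of the \emph{particular} connection $\hat\alpha$ is an analytic property that topology cannot supply (twist any HYM connection by a $U(1)$ connection with non-harmonic curvature: the slope and the ASD trace-free part are unchanged, yet HYM fails). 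The missing step, which is how the paper concludes, is to read the central part off the curvature formula itself: the $\hat d^2$ term contributes exactly $-rk(\hat{\mathcal{F}})\mathcal{B}'$ to $\frac{\sqrt{-1}}{2\pi}\Tr\hat F$ (the sign coming from the dualisation $\mathcal{E}^\vee$), which is harmonic and visibly of the same slope as $E'\simeq\mathcal{E}^\vee|_x$; the Green term's trace is ASD and, being closed by Chern--Weil, harmonic, so it disturbs neither the HYM property nor the slope. Your cohomological computation $v(\hat{\mathcal{F}})=FM(v(\mathcal{F}))$ is a fine consistency check but cannot substitute for this differential-geometric identification.
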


\begin{proof}
Let $\hat{f}^j(\tau)= \psi^j_\tau (x)   \in \Gamma( X, S^-_X \otimes \Hom( \mathcal{E}|_{\tau}, \mathcal{F}   ) ) $ with $\tau\in X^\vee$ and $j=1,2\ldots , rk(\hat{\mathcal{F}})$ be a local orthonormal framing of $\hat{\mathcal{F}}\to X^\vee$. For a section $\hat{s}(\tau)= \sum_j \hat{s}_j \hat{f}^j(\tau)$, with $\hat{s}_j$ being local  $C^\infty$ functions on $X^\vee$, one can compute \[
\hat{\nabla} \hat{s}
= P\hat{d} \hat{s} =(1-D_{\alpha_\tau}^+ G_\tau D^-_{\alpha_\tau}   ) [ \hat{d} \sum \hat{s}_j (\tau) \psi^j_\tau(x)  ],
\]
In components, we can write \[
\hat{\nabla} \hat{s}= ( d \hat{s}_j + \hat{\alpha}_{jk} \hat{s}_k       ) \hat{f}^j,
\]
where the connection matrix $\hat{\alpha}_{jk}$ of $\alpha$ is equal to
\[
\hat{\alpha}_{jk}= \langle \hat{f}^j, \hat{d} \hat{f}^k\rangle=  \langle \psi^j_\tau, \hat{d} \psi^k_\tau   \rangle.
\]
The curvature matrix is
\[
\hat{F}_{ij}= d\hat{\alpha}_{ij} +\sum_k\hat{\alpha}_{ik} \wedge \hat{\alpha}_{kj}=
 \langle \hat{d} \psi^i_\tau, \wedge \hat{d} \psi^j_\tau   \rangle+ \langle  \psi^i_\tau, \wedge \hat{d}^2 \psi^j_\tau   \rangle
 + 
 \sum_k \langle \psi^i_\tau, \hat{d} \psi^k_\tau   \rangle \wedge \langle \psi^k_\tau, \hat{d} \psi^j_\tau   \rangle.
\]
Now $\langle \hat{d} \psi^i_\tau,  \psi^k_\tau   \rangle=-\langle \psi^i_\tau, \hat{d} \psi^k_\tau   \rangle$ by the compatibility with the Hermitian structures, so the third term above is recognized as $- 
\langle P \hat{d} \psi^i_\tau,\wedge \hat{d} \psi^j_\tau   \rangle$, and
\[
\begin{split}
\hat{F}_{ij}= & \langle \hat{d} \psi^i_\tau, \wedge \hat{d} \psi^j_\tau   \rangle- 
\langle P \hat{d} \psi^i_\tau,\wedge \hat{d} \psi^j_\tau   \rangle + \langle  \psi^i_\tau, \wedge \hat{d}^2 \psi^j_\tau   \rangle\\
= & \langle  D_{\alpha_\tau}^+ G_\tau D^-_{\alpha_\tau}      \hat{d} \psi^i_\tau, \wedge \hat{d} \psi^j_\tau   \rangle + \langle  \psi^i_\tau, \wedge \hat{d}^2 \psi^j_\tau   \rangle\\
= & \langle   G_\tau D^-_{\alpha_\tau}      \hat{d} \psi^i_\tau, \wedge D_{\alpha_\tau}^- \hat{d} \psi^j_\tau   \rangle+ \langle  \psi^i_\tau, \wedge \hat{d}^2 \psi^j_\tau   \rangle.
\end{split}
\]
But since $ D^-_{\alpha_\tau}\psi^i_\tau=0$,
\[ D^-_{\alpha_\tau}      \hat{d} \psi^i_\tau=[  D^-_{\alpha_\tau},       \hat{d}   ]\psi^i_\tau.
\]
Computing this commutator $[  D^-_{\alpha_\tau},       \hat{d}   ]$  requires extra care compared to the $T^4$ case in \cite{BraamBaal}. In coordinates, this is given by
\[
[  D^-_{\alpha_\tau},       \hat{d}   ]=   [ \sum_{\mu} c(dx_\mu) \nabla^{univ, t}_{\frac{\partial }{\partial x_\mu}}, \sum_\nu d\tau_\nu  \nabla^{univ,t}_{ \frac{\partial }{\partial \tau_\nu}}         ].
\]
Here $c(dx_\mu)$ means the Clifford multiplication action of the 1-form $dx_\mu$ on spinors, and the covariant derivatives are essentially only acting on the dual of the universal bundle $\mathcal{E}$, not on $\mathcal{F}$, nor the spinor factor, which is why we see the transposed universal connection.

We now recognize that 
\[ \Omega= \sum_{\mu, \nu} [  \nabla^{univ}_{\frac{\partial }{\partial x_\mu}},  \nabla^{univ}_{ \frac{\partial }{\partial \tau_\nu}}         ] dx_\mu \wedge d\tau_\nu
\]
is part of the curvature of the universal connection on  $\mathcal{E}\to   X\times X^\vee$. We can now write
\[
 D^-_{\alpha_\tau}      \hat{d} \psi^i_\tau=[  D^-_{\alpha_\tau},       \hat{d}   ]\psi^i_\tau=  -\Omega^t\cdot{} \psi^i_\tau,
\]
where by writing $-\Omega^t\cdot{} \psi^i_\tau$ we need to make cotangent vectors $dx_\mu$ on $X$ act on spinors by Clifford multiplication, take care of minus transpose on the bundle factor, and leave $d\tau_\nu$ untouched. From this computation, we see the \textbf{curvature matrix} of $\hat{\alpha}$ is
\begin{equation}\label{curvaturematrixNahmtransform}
\hat{F}_{ij}
= \langle   G_\tau \Omega^t\cdot{} \psi^i_\tau, \wedge \Omega^t\cdot{} \psi^j_\tau   \rangle+ \langle  \psi^i_\tau, \wedge \hat{d}^2 \psi^j_\tau   \rangle.
\end{equation}

We claim that this curvature matrix (\ref{curvaturematrixNahmtransform}) is HYM on $X^\vee$. This is more delicate than the flat 4-forus case in \cite{BraamBaal}. We first observe that $\hat{d}^2$ comes from the curvature of $\nabla^{univ}$ in the $X^\vee$ direction, so this contribution to $\hat{F}_{ij}$ is HYM. Its contribution to $\frac{\sqrt{-1}}{2\pi}\Tr \hat{F}$ is $-rk(\hat{\mathcal{F}})\mathcal{B}'$, with $\mathcal{B}'$ as in Theorem \ref{universalconnectionmainproperties}; the reason for the minus sign is that we are using the dualised bundle $\mathcal{E}^\vee$.

It remains to show that the term  $\langle   G_\tau \Omega^t\cdot{} \psi^i_\tau, \wedge \Omega^t\cdot{} \psi^j_\tau   \rangle$ is ASD. Equivalently, for any complex structure $I_k$, we need to show $\langle   G_\tau \Omega^t\cdot{} \psi^i_\tau, \wedge \Omega^t\cdot{} \psi^j_\tau   \rangle$ is equal to
\[
\begin{split}
 \sum \langle   G_\tau \Omega^t(\frac{\partial}{\partial \tau_a}, \frac{\partial}{\partial x_\mu}) c(dx_\mu)   \cdot{} \psi^i_\tau,  \Omega^t(\frac{\partial}{\partial \tau_b}, \frac{\partial}{\partial x_\nu}) c(dx_\nu) \cdot{} \psi^j_\tau   \rangle I_kd\tau_a \wedge I_k d\tau_b
 \\
 = 
\sum \langle   G_\tau \Omega^t(I_k \frac{\partial}{\partial \tau_a}, \frac{\partial}{\partial x_\mu}) c(dx_\mu)   \cdot{} \psi^i_\tau,  \Omega^t(I_k\frac{\partial}{\partial \tau_b}, \frac{\partial}{\partial x_\nu}) c(dx_\nu) \cdot{} \psi^j_\tau   \rangle d\tau_a \wedge  d\tau_b. 
\end{split}
\]
Using the triholomorphic property of $\Omega$, 
\[
\Omega( I_k v, w   )= -\Omega( v, I_k w ), \quad v\in TX^\vee, w\in TX,
\]
the above is
\[
\sum \langle   G_\tau \Omega^t( \frac{\partial}{\partial \tau_a}, \frac{\partial}{\partial x_\mu}) c(I_k dx_\mu)   \cdot{} \psi^i_\tau,  \Omega^t(\frac{\partial}{\partial \tau_b}, \frac{\partial}{\partial x_\nu}) c(I_kdx_\nu) \cdot{} \psi^j_\tau   \rangle d\tau_a \wedge  d\tau_b. 
\]

A source of difficulty is that, because the metric is not flat, we cannot directly commute the Green operator with Clifford multiplication $c(dx_\mu)$. The remedy is quite subtle, and we need to recall some \textbf{spin geometry} in dimension 4 (\cf the Appendix). Since $X$ is hyperk\"ahler, the complex structures $I_1$, $I_2$, $I_3$ can be made to act on $S^+_X$, and act trivially on $S^-_X$, in a way compatible with the Clifford multiplication (see (\ref{complexstructureactiononpositivespin1}), (\ref{complexstructureactiononpositivespin2})).

The crucial observation is that, because $G_\tau$ does not act on the spinor part, it commutes with the operator $I_k^{S^+}$. Thus the above is
\[
\begin{split}
&\sum \langle   G_\tau I_k^{S^+} \Omega^t( \frac{\partial}{\partial \tau_a}, \frac{\partial}{\partial x_\mu}) c( dx_\mu)   \cdot{} \psi^i_\tau,     I_k^{S^+}\Omega^t(\frac{\partial}{\partial \tau_b}, \frac{\partial}{\partial x_\nu}) c(dx_\nu) \cdot{} \psi^j_\tau   \rangle d\tau_a \wedge  d\tau_b
\\
=& \sum \langle  I_k^{S^+} G_\tau  \Omega^t( \frac{\partial}{\partial \tau_a}, \frac{\partial}{\partial x_\mu}) c( dx_\mu)   \cdot{} \psi^i_\tau,     I_k^{S^+}\Omega^t(\frac{\partial}{\partial \tau_b}, \frac{\partial}{\partial x_\nu}) c(dx_\nu) \cdot{} \psi^j_\tau   \rangle d\tau_a \wedge  d\tau_b
\\
=&  \langle   G_\tau  \Omega^t \cdot{} \psi^i_\tau,     \Omega^t \cdot{} \psi^j_\tau   \rangle 
\end{split}
\]
as required. This verifies the ASD condition on $\langle   G_\tau  \Omega^t \cdot{} \psi^i_\tau,     \Omega^t \cdot{} \psi^j_\tau   \rangle $.
\end{proof}

\subsection{The inverse Nahm transform and the comparison map}\label{inverseNahmtransformcomparisonmap}

Let us assume that the family of HYM connections on $E'\simeq \mathcal{E}^\vee|_x\to X^\vee$ parametrised by $x\in X$ are all irreducible as in Section \ref{HyperkahlerperiodsontheMukaidualK3}.
Let $(\hat{ \mathcal{F} }, \hat{\alpha})$ be the Nahm transform of $(\mathcal{F}, \alpha)$, which has the same slope as $E'\to X^\vee$. Assume  $\text{H}^0(X^\vee, \underline{\Hom}(\mathcal{E}^\vee|_x, \mathcal{F} ))=0$ for all $x$, then we can perform the \textbf{inverse Nahm transform construction} starting from $\hat{\mathcal{F} }$. We consider the coupled Dirac operators
\[
\begin{split}
D^+_{\hat{\alpha}_x} : \Gamma(X^\vee, S^+_{X^\vee } \otimes \Hom(\mathcal{E}^\vee|_x, \hat{\mathcal{F}   }    ))\to \Gamma(X^\vee, S^-_{X^\vee } \otimes \Hom(\mathcal{E}^\vee|_x, \hat{\mathcal{F}  }     )), \\
D^-_{\hat{\alpha}_x} : \Gamma(X^\vee, S^-_{X^\vee } \otimes \Hom(\mathcal{E}^\vee|_x, \hat{\mathcal{F}    }   ))\to \Gamma(X^\vee, S^+_{X^\vee } \otimes \Hom(\mathcal{E}^\vee|_x, \hat{\mathcal{F} }      )).
\end{split}
\]
By Lemma \ref{vanishingkernellemma}, The kernel of $D^+_{\hat{\alpha}_x  }$ vanishes, and the kernel of $D^-_{\hat{\alpha}_x}$ fits into a vector bundle $\hat{\hat{\mathcal{F}}}\to X$, equipped with a natural connection $\hat{\hat{\alpha}}$. The pair $(\hat{\hat{\mathcal{F}}}, \hat{\hat{\alpha}})
$ is called  the inverse Nahm transform of $(\hat{F}, \hat{\alpha})$. The aim of this Section is to describe a \textbf{canonical comparison map} between the two bundles $\mathcal{F}\to X$ and $\hat{\hat{\mathcal{F}}}\to X$, by adapting ideas in \cite{BraamBaal}  where a similar construction is made over $T^4$. The main complication in our context is that the K3 metric is not flat. Still, the positive spinor bundle on $X$ and $X^\vee$ are flat, and their spaces of covariantly constant spinors are canonically identified.

Given $x\in X$ and $f\in F|_x^*$, 
we will construct out of $f$ a canonical section 
\[
G\Psi(f)\in \Gamma(X^\vee, \hat{\mathcal{F }}^*\otimes S^-_{X^\vee}\otimes \mathcal{E}^\vee|_x).
\] 
At any $\tau\in X^\vee$, let $s\in \hat{\mathcal{F} }|_\tau \subset \Gamma(X, \mathcal{F}\otimes \mathcal{E}^\vee|_\tau\otimes S^-_X)$. We can make $\Omega$ act on $s$ as in Section \ref{Nahmtransform}, to achieve a section
\[
\Omega^t\cdot s\in \Gamma(X, \mathcal{E}^\vee|_\tau \otimes \mathcal{F} \otimes S^+_X\otimes T^*_\tau X^\vee)=  \Gamma(X, \mathcal{E}^\vee|_\tau \otimes \mathcal{F} \otimes S^+_X)\otimes T^*_\tau X^\vee.
\]
We make the Green operator $G_\tau$ act on the $\Gamma(X, \mathcal{E}^\vee|_\tau \otimes \mathcal{F} \otimes S^+_X)$ factor of $\Omega^t\cdot s$ and leave the $T^*_\tau X^\vee$ factor untouched. We then contract $T_\tau^* X^\vee$ with $S^+_X$ by Clifford multiplication. This produces a section in 
$
\Gamma(X, \mathcal{E}^\vee|_\tau \otimes \mathcal{F} )\otimes S^-_{ X^\vee}|_\tau.
$
Now we evaluate the $\Gamma(X, \mathcal{E}^\vee|_\tau \otimes \mathcal{F} )$ factor at the point $x\in X$ against $f\in F_x^*$. The result lands in $ \mathcal{E}^\vee|_{\tau,x}  \otimes S^-_{ X^\vee}|_\tau$. This algorithm defines an element of $\hat{\mathcal{F}}^*\otimes \mathcal{E}^\vee|_{\tau,x}  \otimes S^-_{ X^\vee}|_\tau$. When we vary $\tau$ we get a section of $
\Gamma(X^\vee, \hat{\mathcal{F }}^*\otimes S^-_{X^\vee}\otimes \mathcal{E}^\vee|_x)
$ as promised, depending only on $f$. We denote this section as $G\Psi(f)$.

The main result of this Section is
\begin{prop}\label{comparisonmapDiracequation}(Compare Proposition 2.1 in \cite{BraamBaal})
The canonical section $G\Psi(f)$ satisfies the Dirac equation.
\end{prop}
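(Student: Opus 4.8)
The plan is to verify that $G\Psi(f)$, as a section of $\Gamma(X^\vee, \hat{\mathcal{F}}^*\otimes S^-_{X^\vee}\otimes \mathcal{E}^\vee|_x)$, lies in the kernel of the coupled Dirac operator $D^-_{\hat{\alpha}_x}$ so that it genuinely defines an element of the inverse Nahm transform $\hat{\hat{\mathcal{F}}}|_x$. The construction of $G\Psi(f)$ is layered --- it involves applying $\Omega^t$, then the Green operator $G_\tau$, then Clifford contraction against $T^*_\tau X^\vee$, then evaluation against $f$ --- so the strategy is to differentiate this composite in the $X^\vee$ direction, apply Clifford multiplication to assemble the Dirac operator, and track how the operator passes through each layer. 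The key point, following \cite{BraamBaal} Proposition 2.1, is that the Green operator $G_\tau=(\nabla^*_{\alpha_\tau}\nabla_{\alpha_\tau})^{-1}$ and the projection structure interact with $\hat{d}$ (the covariant derivative induced by $\nabla^{univ}$ in the $X^\vee$ direction) in a controlled way.

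First I would write out $D^-_{\hat{\alpha}_x}$ applied to $G\Psi(f)$ in local coordinates on $X^\vee$, using that the relevant connection is the subbundle connection $\hat{\nabla}=P\hat{d}$ on $\hat{\mathcal{F}}$ coupled to $\nabla^{univ,t}$ on $\mathcal{E}^\vee|_x$. The natural move is to commute the Dirac operator past the Green operator. Since $G_\tau$ does not act on the $X^\vee$-spinor factor nor on the $X^\vee$ base directions directly, the commutator $[D^-_{\hat\alpha_x}, G_\tau]$ will produce terms governed by the derivative of the family of operators $D^-_{\alpha_\tau}D^+_{\alpha_\tau}=\nabla^*_{\alpha_\tau}\nabla_{\alpha_\tau}$ as $\tau$ varies, which in turn is controlled by $\Omega$. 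I would then use the defining relation $D^-_{\alpha_\tau}\hat{d}\psi = -\Omega^t\cdot\psi$ established in the proof of Theorem \ref{NahmtransformperservesASD}, together with the fact that $\hat{\mathcal{F}}|_\tau=\ker D^-_{\alpha_\tau}$ and the $L^2$-projection $P=1-D^+_{\alpha_\tau}G_\tau D^-_{\alpha_\tau}$, to reorganize the terms. The goal is to exhibit a cascade of cancellations --- the hallmark of the Nahm transform --- whereby the $\Omega$ factors produced by differentiating through $G_\tau$ annihilate against the $\Omega$ factors already present in the definition of $G\Psi(f)$, leaving either zero or a term lying in the image of $D^+$ that is killed by the projection.

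The hard part will be the bookkeeping of the two independent spinor bundles $S^\pm_X$ and $S^\pm_{X^\vee}$ simultaneously, since $G\Psi(f)$ is built from negative spinors on $X$ (through $s\in\hat{\mathcal{F}}|_\tau$) while the Dirac equation to be verified uses spinors on $X^\vee$; the Clifford contraction step that converts $T^*_\tau X^\vee\otimes S^+_X$ into $S^-_{X^\vee}$ mixes these in a way that has no analogue in the flat $T^4$ setting of \cite{BraamBaal}. I expect the main obstacle to be controlling the commutators $[D^-, G_\tau]$ and $[c(d\tau), G_\tau]$ in the presence of the non-flat K3 metric: unlike the torus case, the Green operator does not commute with Clifford multiplication, and one must again invoke the action of the complex structures $I_k^{S^+}$ on positive spinors (as in the proof of Theorem \ref{NahmtransformperservesASD}) and the triholomorphicity of $\Omega$ to force the unwanted curvature-of-the-metric terms to cancel. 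I would isolate this as the crux and handle it by the same mechanism as before --- commuting $I_k^{S^+}$ past $G_\tau$ and exploiting $\Omega(I_k v, w)=-\Omega(v, I_k w)$ --- reducing the verification to a pointwise algebraic identity on the spinor bundles that follows from the Appendix's spin geometry in dimension four.
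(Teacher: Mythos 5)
Your overall strategy is the one the paper follows: differentiate the layered construction $s\mapsto G_\tau\,\Omega^t\cdot s$ in the $\tau$-directions, express the commutator of $\nabla^{univ,t}_{\partial/\partial\tau_i}$ with the Green operator through the variation of the coupled Laplacian (hence through $\Omega$, using the Coulumb gauge property of Lemma \ref{infinitesimalvariationofASDconnectionisinCoulumbgauge}), feed in the subbundle connection $\hat\nabla=P\hat d$ on $\hat{\mathcal{F}}$, Clifford-contract, and force the cancellation via the $I_k^{S^+}$-action and the triholomorphicity of $\Omega$, exactly as in Theorem \ref{NahmtransformperservesASD}. Two of the three Leibniz terms are handled this way in the paper, and your sketch captures them, modulo one imprecision: nothing at the end is ``killed by the projection''. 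The projection inside $\hat\nabla$ is what \emph{produces} the second term, $G_\tau\Omega^t\cdot\{D^+_{\alpha_\tau}G_\tau(\iota_{\partial/\partial\tau_i}\Omega^t)s\}$, and this must cancel the Green-operator commutator term exactly, since the target of the Dirac operator carries no projection whatsoever.

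The genuine gap is the third Leibniz term, where the $\tau$-derivative lands on the curvature itself: $G_\tau(\nabla_{\partial/\partial\tau_i}\Omega^t)\cdot s$. In Braam--van Baal's $T^4$ setting this term is invisible because the Poincar\'e bundle's curvature is covariantly constant, so your blueprint of ``flat case plus $I_k^{S^+}$ trick'' never meets it; on K3 it is nonzero, and it is not touched by the triholomorphicity mechanism, by the Green-commutator bookkeeping, or by any projection. The paper kills it by a separate argument: the component Yang--Mills equation (\ref{YangMillsoncomponents}), \ie $d_{A^{univ}}^{X^\vee,*}\Omega^t=0$, converts $\sum_i c(d\tau_i)(\nabla_{\partial/\partial\tau_i}\Omega^t)$ into $c(d_{A^{univ}}^{X^\vee}\Omega^t)$; the Bianchi identity decomposed by type then gives $d_{A^{univ}}^{X^\vee}\Omega=-d_{A^{univ}}^{X}F(\nabla^{univ})^{X^\vee,X^\vee}$; and finally, since $F(\nabla^{univ})^{X^\vee,X^\vee}$ is projectively ASD on $X^\vee$ with $x$-independent trace, its $x$-variation is a coupled ASD 2-form on $X^\vee$, which annihilates the positive spinors appearing at that stage of the construction. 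Without this Bianchi/Yang--Mills step, your cancellation cascade leaves an uncancelled term and the Dirac equation does not close; any complete write-up along your lines would have to supply it.
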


This allows us to build the comparison map $\mathcal{F}\to \hat{\hat{\mathcal{F} }}$. Recall from the proof of Theorem \ref{NahmtransformperservesASD} the local orthonormal frame $\hat f^j= \psi_\tau^j$ of $\hat {\mathcal{F} }$. The section $G\Psi (f)$ evaluates on $\psi_\tau^j $ and outputs a local section of $S^-_{X^\vee}\otimes \mathcal{E}^\vee|_x$. There is an antilinear bundle automorphism $\epsilon$ acting on $S^-_{X^\vee}$ (\cf Appendix), which canonically extends to an antilinear bundle map $\epsilon: S^-_{X^\vee}\otimes \mathcal{E}^\vee|_x \to S^-_{X^\vee}\otimes \mathcal{E}|_x$ by using the Hermitian structure on $\mathcal{E}|_x$. Thus we can write a well defined section of $\hat{\mathcal{F} }\otimes S^-_{X^\vee}\otimes \mathcal{E}|_x\to X^\vee$, whose value at $\tau$ is
\begin{equation}\label{comparisonmap}
u_x(f) (\tau)= \frac{1}{2}\sum_j \epsilon(  G\Psi(f)|_\tau (\psi^j_\tau   )     ) \hat{f}^j.
\end{equation}
This section $u_x(f)$ depends on $f$ in an antilinear fashion. Therefore using the inner product on $\mathcal{F}|_x$, this construction gives a complex linear map from $\mathcal{F}|_x$ to $\Gamma(X^\vee,\hat{\mathcal{F} }\otimes S^-_{X^\vee}\otimes \mathcal{E}|_x )$.

\begin{thm}\label{comparisonmapDiracequation1}
The section $u_x(f)$ defined above satisfies the Dirac equation, and therefore is an element of $\hat{\hat{\mathcal{F}}}$.
\end{thm}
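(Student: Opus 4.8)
The plan is to deduce the theorem from Proposition \ref{comparisonmapDiracequation}, which already supplies the Dirac equation $D^-(G\Psi(f))=0$ for the section $G\Psi(f)\in\Gamma(X^\vee,\hat{\mathcal{F}}^*\otimes S^-_{X^\vee}\otimes\mathcal{E}^\vee|_x)$. The only remaining task is to transport this equation across the recipe (\ref{comparisonmap}) that manufactures $u_x(f)$ from $G\Psi(f)$: evaluate the $\hat{\mathcal{F}}^*$ factor on the fibrewise orthonormal frame $\hat f^j=\psi^j_\tau$, apply the antilinear spinor map $\epsilon$ on $S^-_{X^\vee}$, convert $\mathcal{E}^\vee|_x$ to $\mathcal{E}|_x$ using the Hermitian structure, and re-sum against $\hat f^j$. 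I would show that this recipe is, intrinsically, a fibrewise antilinear bundle isomorphism
\[
\hat{\mathcal{F}}^*\otimes S^-_{X^\vee}\otimes\mathcal{E}^\vee|_x \;\longrightarrow\; \hat{\mathcal{F}}\otimes S^-_{X^\vee}\otimes\mathcal{E}|_x=S^-_{X^\vee}\otimes\Hom(\mathcal{E}^\vee|_x,\hat{\mathcal{F}}),
\]
built from three antilinear maps $\sharp_{\hat{\mathcal{F}}}$, $\epsilon$, $\sharp_{\mathcal{E}|_x}$, and that it intertwines the two coupled negative Dirac operators. Granting this, complex-conjugating $D^-(G\Psi(f))=0$ yields $D^-_{\hat{\alpha}_x}u_x(f)=0$, so that $u_x(f)\in\ker D^-_{\hat{\alpha}_x}=\hat{\hat{\mathcal{F}}}|_x$ by the very definition of the inverse Nahm transform.

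The substance is the intertwining, which I would establish factor by factor. The musical isomorphism $\sharp_{\hat{\mathcal{F}}}:\hat{\mathcal{F}}^*\to\hat{\mathcal{F}}$ induced by the Hermitian metric is parallel because $\hat{\alpha}=P\hat d$ is unitary, hence it carries the dual connection $\hat{\alpha}^*$ to the conjugate of $\hat{\alpha}$; concretely it is exactly the frame summation $\phi\mapsto\sum_j\overline{\phi(\psi^j_\tau)}\,\psi^j_\tau$, which is independent of the chosen orthonormal frame and therefore covariantly constant. The same reasoning gives that $\sharp_{\mathcal{E}|_x}:\mathcal{E}^\vee|_x\to\mathcal{E}|_x$ is parallel, since $\nabla^{univ}$ is unitary. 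Finally $\epsilon$ is the covariantly constant quaternionic structure on $S^-_{X^\vee}$ recalled in the Appendix, and the key spin-geometric input is that $\epsilon$ commutes with Clifford multiplication $c(d\tau_\nu)$ by real cotangent vectors. Assembling the three maps, the composite antilinear isomorphism sends $D^-=\sum_\nu c(d\tau_\nu)\nabla_\nu$ on the source to $\overline{D^-_{\hat{\alpha}_x}}$ on the target, which is precisely what allows the kernel statement to pass through; the normalizing constant $\tfrac12$ is fixed by the convention for $\epsilon^2$ and the spinor inner product and is irrelevant to the kernel assertion.

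The main obstacle I anticipate is bookkeeping rather than analysis, the genuine analytic difficulty having been absorbed into Proposition \ref{comparisonmapDiracequation}. One must check that, after the three antilinear identifications, the coefficient connection $\hat{\alpha}^*\otimes(\nabla^{univ})^*$ on $\hat{\mathcal{F}}^*\otimes\mathcal{E}^\vee|_x$ used to phrase Proposition \ref{comparisonmapDiracequation} really corresponds to the connection $\hat{\alpha}_x$ on $\Hom(\mathcal{E}^\vee|_x,\hat{\mathcal{F}})$ that defines $D^-_{\hat{\alpha}_x}$. This is exactly the place where the systematic use of $\mathcal{E}^\vee$ versus $\mathcal{E}$ and the various dualisations from Section \ref{inverseNahmtransformcomparisonmap} must be tracked with care, since a stray sign or transpose would break the intertwining. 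A secondary delicacy, and the one where the non-flatness of the K3 metric actually enters, is confirming that $\epsilon$ is parallel for the Levi-Civita connection on $S^-_{X^\vee}$, so that it commutes with the full coupled covariant derivative and not merely with the pointwise Clifford action; this is guaranteed by the Ricci-flatness of $X^\vee$ and the covariant triviality of the positive spinor bundle, which makes the spinorial quaternionic structure covariantly constant.
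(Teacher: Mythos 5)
Your proposal is correct and takes essentially the same route as the paper: the paper's own proof is exactly the observation that $\epsilon$ (together with the Hermitian identifications) preserves Clifford multiplication and the Hermitian connections, hence transports the Dirac equation of Proposition \ref{comparisonmapDiracequation} to $u_x(f)$. Your factor-by-factor verification that the comparison recipe is a parallel antilinear isomorphism intertwining the two coupled Dirac operators is simply a careful expansion of that one-line argument.
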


\begin{proof}
This follows immediately from Proposition \ref{comparisonmapDiracequation} because $\epsilon$ preserves the Clifford multiplication and the Hermitian connection, so preserves the Dirac equation.
\end{proof}

The rest of this Section is devoted to the proof of Proposition \ref{comparisonmapDiracequation}. This requires us to understand how to take covariant derivatives of $G\Psi(f)$ as $\tau\in X^\vee$ varies. We follow the definition of $G\Psi(f)$. The most essential step is to compute $G_\tau \Omega^t\cdot s$. We can regard $s\mapsto G_\tau \Omega^t\cdot s$ as an element in the Hom bundle from $\hat{\mathcal{F}}$ to the infinite rank bundle with fibre $\Gamma(X, \mathcal{E}^\vee|_\tau \otimes \mathcal{F}\otimes S^+_X)\otimes T^*_\tau X^\vee$. The covariant derivative on this Hom bundle is induced from the covariant derivative on $\hat{\mathcal{F}}$ and the covariant derivative on the infinite rank bundle; the latter is trivial on the $\mathcal{F}\otimes S^+_X$ factor, agrees with the Levi-Civita connection on the $T^*X^\vee$ factor, and is induced from the transposed universal connection $-\nabla^{univ,t}$ on the $\mathcal{E}^\vee$ factor. In concrete formulae, we need to compute
\[
\nabla_{\frac{\partial}{\partial \tau_i}} \{  G_\tau (\Omega^t\cdot s) \}=[-\nabla^{univ,t}_{\frac{\partial}{\partial \tau_i} } , G_\tau     ](\Omega^t\cdot s  )+G_\tau( \nabla_{\frac{\partial}{\partial \tau_i}  } \Omega^t )\cdot s+ G_\tau ( \Omega^t\cdot (-\nabla^{univ,t}_{\frac{\partial}{\partial \tau_i}  }s)   )
\]
on the infinite rank bundle, and subtract off 
\[
\begin{split}
G_\tau (  \Omega^t\cdot\{ \nabla^{\hat{\mathcal{F} } }_{\frac{\partial}{\partial \tau_i}} s       \}              )&=G_\tau   \Omega^t\cdot\{ -\nabla^{univ,t }_{\frac{\partial}{\partial \tau_i}} s- D^+_{\alpha_\tau} G_\tau D^-_{\alpha_\tau}( -\nabla^{univ,t}_{\frac{\partial}{\partial \tau_i}} s  )      \}              
\\
&=G_\tau   \Omega^t\cdot\{ -\nabla^{univ,t }_{\frac{\partial}{\partial \tau_i}} s- D^+_{\alpha_\tau} G_\tau [D^-_{\alpha_\tau}, -\nabla^{univ,t}_{\frac{\partial}{\partial \tau_i}}]s         \}     \\
&=   G_\tau   \Omega^t\cdot\{ -\nabla^{univ,t }_{\frac{\partial}{\partial \tau_i}} s- D^+_{\alpha_\tau} G_\tau (\iota_{\frac{\partial}{\partial \tau_i}}\Omega^t)\cdot s         \}      
.
\end{split}
\]
The above equation uses the description of the connection on $\hat{\mathcal{F} }$, as discussed in Section \ref{Nahmtransform}. After this subtraction, we get
\begin{equation}\label{GPsiderivativeintermediatestep}
[-\nabla^{univ,t}_{\frac{\partial}{\partial \tau_i} } , G_\tau     ](\Omega^t\cdot s  )+ G_\tau   \Omega^t\cdot\{ D^+_{\alpha_\tau} G_\tau (\iota_{\frac{\partial}{\partial \tau_i}}\Omega^t)s         \}+G_\tau( \nabla_{\frac{\partial}{\partial \tau_i}  } \Omega^t )\cdot s .            
\end{equation}
The task is to understand the individual terms.

\begin{lem}(Compare Lemma 2.2 in \cite{BraamBaal}, `partial derivative of the Green operator')
The commutator \[[-\nabla^{univ,t}_{\frac{\partial}{\partial \tau_i} } , G_\tau     ]=2 G_\tau ( -\iota_{\frac{\partial}{\partial \tau_i} }\Omega^t , \nabla_{\alpha_\tau}  ) G_\tau,\] 
where $( -\iota_{\frac{\partial}{\partial \tau_i} }\Omega^t , \nabla_{\alpha_\tau}  )=\sum_{\mu}-\Omega^t( \frac{\partial}{\partial \tau_i}  , \frac{\partial}{\partial x_\mu}) \nabla^{\alpha_\tau}_ \frac{\partial}{\partial x_\mu} $ for an orthonormal basis $\frac{\partial}{\partial x_\mu}$, or in other words, we contract the 1-form part of $ -\iota_{\frac{\partial}{\partial \tau_i} }\Omega^t$ and $\nabla_{\alpha_\tau} $.
\end{lem}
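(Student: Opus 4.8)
The plan is to obtain the formula by differentiating the defining relation of the Green operator. Recall from the proof of Lemma \ref{vanishingkernellemma} that on the K3 surface the scalar curvature vanishes and the ASD condition gives $F^+_{\alpha_\tau}=0$, so the Lichnerowicz formula reduces $D^-_{\alpha_\tau}D^+_{\alpha_\tau}$ to the coupled Bochner Laplacian $\Lap_\tau:=\nabla^*_{\alpha_\tau}\nabla_{\alpha_\tau}$, and $G_\tau=\Lap_\tau^{-1}$. Writing $D:=-\nabla^{univ,t}_{\frac{\partial}{\partial\tau_i}}$ and treating it as a derivation on the $\tau$-dependent operators over $X$, I would apply $D$ to the identity $\Lap_\tau G_\tau=\mathrm{Id}$. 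This gives $[D,\Lap_\tau]G_\tau+\Lap_\tau[D,G_\tau]=0$, hence
\[
[D,G_\tau]=-G_\tau\,[D,\Lap_\tau]\,G_\tau,
\]
so the whole problem reduces to computing the commutator $[D,\Lap_\tau]$.

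For this commutator, the key point is that the only $\tau$-dependence of $\Lap_\tau$ sits in the coupling $\alpha_\tau$, so commuting the $\tau$-derivative $D$ with the $X$-direction covariant derivative produces the mixed curvature of the universal connection. Concretely, for an orthonormal frame $\frac{\partial}{\partial x_\mu}$ on $X$ one has $[\nabla^{univ,t}_{\frac{\partial}{\partial\tau_i}},\nabla^{\alpha_\tau}_{\frac{\partial}{\partial x_\mu}}]=\Omega^t(\frac{\partial}{\partial\tau_i},\frac{\partial}{\partial x_\mu})$ (only the mixed component $\Omega$ of $F(\nabla^{univ})$ contributes, since $\mathcal{F}$ and $S^+_X$ live over $X$ and carry no mixed curvature), so that $[D,\nabla^{\alpha_\tau}_{\frac{\partial}{\partial x_\mu}}]=\omega_\mu$ with $\omega_\mu:=-\Omega^t(\frac{\partial}{\partial\tau_i},\frac{\partial}{\partial x_\mu})$. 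Expanding $\Lap_\tau=-\sum_\mu\nabla^{\alpha_\tau}_{\frac{\partial}{\partial x_\mu}}\nabla^{\alpha_\tau}_{\frac{\partial}{\partial x_\mu}}$ in normal coordinates at the point of evaluation and applying the Leibniz rule to each factor, I would obtain
\[
[D,\Lap_\tau]=-2\sum_\mu\omega_\mu\,\nabla^{\alpha_\tau}_{\frac{\partial}{\partial x_\mu}}-\sum_\mu\bigl(\nabla^{\alpha_\tau}_{\frac{\partial}{\partial x_\mu}}\omega_\mu\bigr).
\]
The factor $2$ comes precisely from the two occurrences of $\nabla_{\alpha_\tau}$ in $\Lap_\tau$, and the surviving first-order term is exactly $-2\,(-\iota_{\frac{\partial}{\partial\tau_i}}\Omega^t,\nabla_{\alpha_\tau})$ in the notation of the statement.

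It remains to kill the zeroth-order term $\sum_\mu\nabla^{\alpha_\tau}_{\frac{\partial}{\partial x_\mu}}\omega_\mu$, which is, up to sign, the coupled divergence $d^*_{\alpha_\tau}\bigl(-\iota_{\frac{\partial}{\partial\tau_i}}\Omega^t\bigr)$. This is the step that genuinely uses the geometry rather than formal manipulation, and it is the analogue over $X$ of the Coulomb gauge computation in Lemma \ref{infinitesimalvariationofASDconnectionisinCoulumbgauge}. Indeed, the Yang-Mills identity (\ref{YangMillsoncomponents}) for the triholomorphic universal connection gives $d^{X,*}_{A^{univ}}\Omega=0$, and contracting with $\frac{\partial}{\partial\tau_i}$ (which commutes with $d^{X,*}_{A^{univ}}$, being a pure $X^\vee$-direction) yields $d^{X,*}_{A^{univ}}\iota_{\frac{\partial}{\partial\tau_i}}\Omega=0$; transposing gives the vanishing of the zeroth-order term. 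Substituting back into the reduction formula then produces
\[
[D,G_\tau]=2\,G_\tau\,\bigl(-\iota_{\frac{\partial}{\partial\tau_i}}\Omega^t,\nabla_{\alpha_\tau}\bigr)\,G_\tau,
\]
as claimed. I expect the main obstacle to be the bookkeeping in the curved-metric setting: verifying that the Levi-Civita correction terms implicit in $\Lap_\tau$ contribute no extra zeroth-order pieces, and correctly recognising the surviving zeroth-order term as the coupled divergence so that (\ref{YangMillsoncomponents}) can be invoked.
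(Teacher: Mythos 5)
Your proposal is correct and follows essentially the same route as the paper: both reduce the claim to the variation of the Laplacian via the derivative-of-the-inverse identity $[D,G_\tau]=-G_\tau[D,\Lap_\tau]G_\tau$, compute $[D,\Lap_\tau]$ by the Leibniz rule to get a first-order term with the factor $2$ plus a divergence term, and kill the divergence using the Coulomb gauge condition coming from the Yang--Mills identity (\ref{YangMillsoncomponents}) with the roles of $X$ and $X^\vee$ switched. The only cosmetic difference is that the paper organises the commutator invariantly via the Jacobi identity applied to $\nabla^*_{\alpha_\tau}\nabla_{\alpha_\tau}$ (so the gauge condition enters through $\nabla^*_{\alpha_\tau}\circ\iota_{\frac{\partial}{\partial\tau_i}}\Omega^t$), whereas you expand in normal coordinates, which is equivalent after the bookkeeping you yourself flag.
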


\begin{proof}
We compute the variation of the Laplacian $[-\nabla^{univ,t}_{\frac{\partial}{\partial \tau_i} } , \nabla^*_{\alpha_\tau} \nabla_{\alpha_\tau}     ]$. By the Jacobi identity, 
\[
[  \nabla^*_{\alpha_\tau} \nabla_{\alpha_\tau} ,   -\nabla^{univ,t}_{\frac{\partial}{\partial \tau_i} }      ]= \nabla^*_{\alpha_\tau}[ \nabla_{\alpha_\tau}, -\nabla^{univ,t}_{\frac{\partial}{\partial \tau_i} }    ]+[ \nabla^*_{\alpha_\tau}, -\nabla^{univ,t}_{\frac{\partial}{\partial \tau_i} }  ]    \nabla_{\alpha_\tau}.
\]
The first term is $\nabla^*_{\alpha_\tau}\circ \iota_{\frac{\partial}{\partial \tau_i} } \Omega^t$, where $\iota_{\frac{\partial}{\partial \tau_i} } \Omega^t$ should be understood as a pointwise curvature type operator acting on sections in $\Gamma(X, \mathcal{F}\otimes \mathcal{E}^\vee|_\tau)$. Since $\iota_{\frac{\partial}{\partial \tau_i} } \Omega^t$ is in the Coulumb gauge (\cf the proof of Lemma \ref{infinitesimalvariationofASDconnectionisinCoulumbgauge} and switch the role of $X$ and $X^\vee$), we get the operator identity
\[
\nabla^*_{\alpha_\tau}[ \nabla_{\alpha_\tau}, -\nabla^{univ,t}_{\frac{\partial}{\partial \tau_i} }    ]=(  -\iota_{\frac{\partial}{\partial \tau_i} }\Omega^t , \nabla_{\alpha_\tau}       ).
\]
Now for the second term in the Jacobi identity, we compute
\[
[ \nabla^*_{\alpha_\tau}, -\nabla^{univ,t}_{\frac{\partial}{\partial \tau_i} }  ] =\sum_{\mu} \iota_{\frac{\partial}{\partial x_\mu}   }  \circ \Omega^t( \frac{\partial}{\partial x_\mu},\frac{\partial}{\partial \tau_i}    ),
\] 
so the second term $[ \nabla^*_{\alpha_\tau}, -\nabla^{univ,t}_{\frac{\partial}{\partial \tau_i} }  ]    \nabla_{\alpha_\tau}=(  -\iota_{\frac{\partial}{\partial \tau_i} }\Omega^t , \nabla_{\alpha_\tau}       ).$ Thus
\[
[  \nabla^*_{\alpha_\tau} \nabla_{\alpha_\tau} ,   -\nabla^{univ,t}_{\frac{\partial}{\partial \tau_i} }      ]=2(  -\iota_{\frac{\partial}{\partial \tau_i} }\Omega^t , \nabla_{\alpha_\tau}           ), 
\]
and multiplying $G_\tau$ both on the left and the right gives us the Lemma. The reader can understand this as essentially the formula for the derivative of the inverse matrix.
\end{proof}

%\begin{rmk}
%The reason we refer to the commutator as the partial derivative, is that we can think of the universal connection as giving an infinitesimal product structure of $\mathcal{E}\to X\times X^\vee$ around the central fibre $\mathcal{E}|_\tau \to X$, and then the commutator is just $\frac{\partial G_\tau}{\partial \tau_i}$. This intuition is quite helpful as long as one remembers the `mixed partial derivatives' are not equal.  We notice that this operator $\frac{\partial G_\tau}{\partial \tau_i}$  does not interfere with spinors over $X$.
%\end{rmk}

\begin{lem}\label{computationOmegaDirac}
The operator
\[
\iota_{\frac{\partial}{\partial \tau_i} }\Omega^t\cdot D^+_{\alpha_\tau}=-(  \iota_{\frac{\partial}{\partial \tau_i} }\Omega^t , \nabla_{\alpha_\tau}           )+\sum_{k=1}^3 I_k^{S^+} (  \iota_{I_k\frac{\partial}{\partial \tau_i} }\Omega^t , \nabla_{\alpha_\tau}           ).
\]
\end{lem}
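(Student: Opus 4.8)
The plan is to expand both operators in a local orthonormal coframe $dx_\mu$ on $X$ and reduce the identity to pointwise Clifford algebra on positive spinors. Writing $D^+_{\alpha_\tau}=\sum_\nu c(dx_\nu)\nabla^{\alpha_\tau}_{\frac{\partial}{\partial x_\nu}}$, and recalling that contracting $\Omega^t$ with $\frac{\partial}{\partial \tau_i}$ leaves the coupled $X$-one-form $\iota_{\frac{\partial}{\partial \tau_i}}\Omega^t=\sum_\mu \Omega^t(\tfrac{\partial}{\partial \tau_i},\tfrac{\partial}{\partial x_\mu})\,dx_\mu$ which acts on spinors by Clifford multiplication, I would first record
\[
\iota_{\frac{\partial}{\partial \tau_i}}\Omega^t\cdot D^+_{\alpha_\tau}=\sum_{\mu,\nu}\Omega^t(\tfrac{\partial}{\partial \tau_i},\tfrac{\partial}{\partial x_\mu})\,c(dx_\mu)\,c(dx_\nu)\,\nabla^{\alpha_\tau}_{\frac{\partial}{\partial x_\nu}}.
\]
Here the bundle endomorphism $\Omega^t(\tfrac{\partial}{\partial \tau_i},\tfrac{\partial}{\partial x_\mu})$ acts on the $\Hom(\mathcal{E}|_\tau,\mathcal{F})$ factor, and therefore commutes with every Clifford multiplication and with each $I_k^{S^+}$, which act only on the spinor factor.

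Next I would split the product $c(dx_\mu)c(dx_\nu)$ into symmetric and antisymmetric parts. The Clifford relation $c(dx_\mu)c(dx_\nu)+c(dx_\nu)c(dx_\mu)=-2\delta_{\mu\nu}$ isolates the diagonal contribution $-\sum_\mu \Omega^t(\tfrac{\partial}{\partial \tau_i},\tfrac{\partial}{\partial x_\mu})\nabla^{\alpha_\tau}_{\frac{\partial}{\partial x_\mu}}=-(\iota_{\frac{\partial}{\partial \tau_i}}\Omega^t,\nabla_{\alpha_\tau})$, which is precisely the first term on the right-hand side. The remaining antisymmetric part is the Clifford action on $S^+$ of the two-forms $dx_\mu\wedge dx_\nu$, and this is where the hyperk\"ahler spin geometry enters.

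The key input is the Appendix description of how self-dual two-forms act on positive spinors: since $X$ is hyperk\"ahler, $S^+_X$ is covariantly trivial and the three complex structures act as parallel operators $I_k^{S^+}$, related to Clifford multiplication through (\ref{complexstructureactiononpositivespin1}) and (\ref{complexstructureactiononpositivespin2}). These give, with the normalization fixed in the Appendix, a pointwise identity on $S^+$ of the form
\[
c(dx_\mu)c(dx_\nu)=-\delta_{\mu\nu}-\sum_{k=1}^3(\omega_k)_{\mu\nu}\,I_k^{S^+},
\]
where $(\omega_k)_{\mu\nu}=g(I_k\tfrac{\partial}{\partial x_\mu},\tfrac{\partial}{\partial x_\nu})$ are the components of the hyperk\"ahler forms. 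Because $I_k^{S^+}$ is parallel it commutes with $\nabla_{\alpha_\tau}$ and with the endomorphism factor, so it can be pulled to the left of the whole expression. Substituting and then applying the triholomorphic identity $\Omega^t(I_k v,w)=-\Omega^t(v,I_k w)$ for $v\in TX^\vee$, $w\in TX$ to recognise $\sum_\mu\Omega^t(\tfrac{\partial}{\partial \tau_i},\tfrac{\partial}{\partial x_\mu})(\omega_k)_{\mu\nu}$ as the contraction $\iota_{I_k\frac{\partial}{\partial \tau_i}}\Omega^t$ paired against $\nabla_{\alpha_\tau}$, the antisymmetric part collapses into $\sum_k I_k^{S^+}(\iota_{I_k\frac{\partial}{\partial \tau_i}}\Omega^t,\nabla_{\alpha_\tau})$, completing the identity.

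The main obstacle is bookkeeping the signs and normalizations in the spin-geometry identity above: one must match the Appendix convention for $I_k^{S^+}$ against the sign produced by the triholomorphic relation together with the antisymmetry of $(\omega_k)_{\mu\nu}$, so that the two conventions are mutually consistent and no stray factor survives. The second delicate point is verifying that $I_k^{S^+}$ commutes with the \emph{coupled} connection $\nabla_{\alpha_\tau}$, and not merely with the Levi-Civita connection on spinors; this follows from $S^+_X$ being covariantly trivial, so that $I_k^{S^+}$ is a parallel endomorphism acting trivially on the $\Hom(\mathcal{E}|_\tau,\mathcal{F})$ factor.
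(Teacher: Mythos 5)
Your overall strategy coincides with the paper's (sketched) proof: expand $\iota_{\frac{\partial}{\partial\tau_i}}\Omega^t\cdot$ and $D^+_{\alpha_\tau}$ in an orthonormal frame, peel off the diagonal $\mu=\nu$ part via the Clifford relations to get $-(\iota_{\frac{\partial}{\partial\tau_i}}\Omega^t,\nabla_{\alpha_\tau})$, and convert the off-diagonal part into the three $I_k^{S^+}$ terms using the action of self-dual forms on $S^+$ together with the triholomorphic property of $\Omega$. The gap is the sign of your central spin-geometry identity. Under the paper's conventions --- $\omega_k=g(I_k\cdot,\cdot)$ as in Section \ref{hyperkahlerquotientreview}, and the Appendix normalization in which $I_k^{S^+}$ equals one half of Clifford multiplication by $\omega_k$ --- the correct pointwise identity on $S^+$ is
\[
c(dx_\mu)c(dx_\nu)\big|_{S^+}=-\delta_{\mu\nu}+\sum_{k=1}^3(\omega_k)_{\mu\nu}I_k^{S^+},\qquad(\omega_k)_{\mu\nu}=g(I_k\tfrac{\partial}{\partial x_\mu},\tfrac{\partial}{\partial x_\nu}),
\]
with a \emph{plus} sign, not the minus sign you wrote. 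Indeed, by the paper's formula $c(v)c(w)=c(v\wedge w)-(v,w)$, the self-dual projection of $dx_\mu\wedge dx_\nu$ is $\frac{1}{2}\sum_k(\omega_k)_{\mu\nu}\omega_k$, ASD forms act as zero on $S^+$, and $\frac{1}{2}c(\omega_k)|_{S^+}=I_k^{S^+}$. You can also verify it directly in the Appendix model: $c_1c_2$ acts on the basis $\{1,f_1\wedge f_2\}$ of $S^+$ as $\mathrm{diag}(-i,i)=+I_1^{S^+}$, whereas your formula predicts $-I_1^{S^+}$ because $(\omega_1)_{12}=+1$.

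This is not a harmless slip, because your subsequent recognition step is sign-correct: skew-symmetry of $I_k$ with respect to $g$ plus the triholomorphic relation give
\[
\sum_\mu\Omega^t(\tfrac{\partial}{\partial\tau_i},\tfrac{\partial}{\partial x_\mu})(\omega_k)_{\mu\nu}=+\Omega^t(I_k\tfrac{\partial}{\partial\tau_i},\tfrac{\partial}{\partial x_\nu}).
\]
Feeding your minus-sign identity through this, the antisymmetric part collapses to $-\sum_k I_k^{S^+}(\iota_{I_k\frac{\partial}{\partial\tau_i}}\Omega^t,\nabla_{\alpha_\tau})$, so your argument as written proves the lemma with the wrong sign on the second term; there is no consistent choice of conventions making your Clifford identity, your stated triholomorphic identity, and the claimed conclusion hold simultaneously. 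The repair is purely local: change the minus to a plus in the displayed identity, after which your derivation goes through and agrees with the paper's intended computation. (Your second ``delicate point'' about $I_k^{S^+}$ commuting with the coupled connection is not actually needed: $I_k^{S^+}$ only multiplies the coefficients from the left and is never differentiated, so only its commutation with the $\Hom$-factor matters.)
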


\begin{proof}
We can write using a pointwise orthonormal frame $\frac{\partial}{\partial x_\mu}$ that
\[
\iota_{\frac{\partial}{\partial \tau_i} }\Omega^t\cdot= \sum_{\mu} \Omega^t(\frac{\partial}{\partial \tau_i}, \frac{\partial}{\partial x_\mu}    )c(\frac{\partial}{\partial x_\mu}  ),
\quad
D^+_{\alpha_\tau} =\sum_{\nu}c(\frac{\partial}{\partial x_\nu}     )\nabla^{\alpha_\tau}_{\frac{\partial}{\partial x_\nu}  } .
\]
Now we write the composed operator $\iota_{\frac{\partial}{\partial \tau_i} }\Omega^t\cdot D^+_{\alpha_\tau} $ as the sum of $4$ parts, corresponding to $\frac{\partial}{\partial x_\mu}=\frac{\partial}{\partial x_\nu}$, and $\frac{\partial}{\partial x_\mu}=I_k\frac{\partial}{\partial x_\nu}$, for $k=1,2,3$. This leads to our claimed formula using the triholomorphic property of $\Omega^t$ and the properties of $I_k^{S^+}$. The manipulations are similar to the proof of Theorem \ref{NahmtransformperservesASD}. We leave the details to the reader as an exercise.
\end{proof}

The above two lemmas give
\begin{cor}\label{computationGOmegaDGOmega}
The term 
$
G_\tau   \Omega^t\cdot\{ D^+_{\alpha_\tau} G_\tau (\iota_{\frac{\partial}{\partial \tau_i}}\Omega^t)s         \}
$ in (\ref{GPsiderivativeintermediatestep}) is equal to
\[
\frac{1}{2}\sum_{j} d\tau_j \otimes \{  [ -\nabla^{univ,t}_{\frac{\partial}{\partial \tau_j} } , G_\tau       ] (\iota_{\frac{\partial}{\partial \tau_i}}\Omega^t)s +\sum_{k=1}^3   [ \nabla^{univ,t}_{I_k\frac{\partial}{\partial \tau_j} } , G_\tau       ] (\iota_{I_k\frac{\partial}{\partial \tau_i}}\Omega^t)s         \}.
\]
\end{cor}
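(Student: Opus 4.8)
The plan is to substitute the operator identity of Lemma \ref{computationOmegaDirac} into the expression and then collapse each resulting $G_\tau(\cdots)G_\tau$ sandwich into a commutator using the ``partial derivative of the Green operator'' lemma. First I would abbreviate $w=G_\tau(\iota_{\frac{\partial}{\partial \tau_i}}\Omega^t)s\in\Gamma(X,\mathcal{E}^\vee|_\tau\otimes\mathcal{F}\otimes S^+_X)$, a positive--spinor--coupled section, and expand the Clifford contraction as $\Omega^t\cdot=\sum_j d\tau_j\otimes(\iota_{\frac{\partial}{\partial \tau_j}}\Omega^t)\cdot$, so that the term in question reads $\sum_j d\tau_j\otimes G_\tau(\iota_{\frac{\partial}{\partial \tau_j}}\Omega^t)\cdot D^+_{\alpha_\tau}w$. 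Then Lemma \ref{computationOmegaDirac} rewrites $(\iota_{\frac{\partial}{\partial \tau_j}}\Omega^t)\cdot D^+_{\alpha_\tau}$ as the sum of a diagonal first--order operator $-(\iota_{\frac{\partial}{\partial \tau_j}}\Omega^t,\nabla_{\alpha_\tau})$ and a triholomorphic correction $\sum_{k=1}^3 I_k^{S^+}(\iota_{I_k\frac{\partial}{\partial \tau_j}}\Omega^t,\nabla_{\alpha_\tau})$, both of which act only on the bundle and base factors and leave $S^+_X$ untouched.

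For the diagonal piece I unwind $w$ to expose a full sandwich $G_\tau(\iota_{\frac{\partial}{\partial \tau_j}}\Omega^t,\nabla_{\alpha_\tau})G_\tau$ applied to $(\iota_{\frac{\partial}{\partial \tau_i}}\Omega^t)s$. Writing $(\iota_{\frac{\partial}{\partial \tau_j}}\Omega^t,\nabla_{\alpha_\tau})=-(-\iota_{\frac{\partial}{\partial \tau_j}}\Omega^t,\nabla_{\alpha_\tau})$ and invoking the Green--operator lemma $[-\nabla^{univ,t}_{\frac{\partial}{\partial \tau_j}},G_\tau]=2G_\tau(-\iota_{\frac{\partial}{\partial \tau_j}}\Omega^t,\nabla_{\alpha_\tau})G_\tau$, this contribution collapses to $\frac{1}{2}[-\nabla^{univ,t}_{\frac{\partial}{\partial \tau_j}},G_\tau](\iota_{\frac{\partial}{\partial \tau_i}}\Omega^t)s$, which is precisely the first summand in the claim. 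Here I must track a chain of signs: the explicit minus in Lemma \ref{computationOmegaDirac}, the sign hidden in passing from $\iota\Omega^t$ to $-\iota\Omega^t$, and the relation $[-\nabla^{univ,t},G_\tau]=-[\nabla^{univ,t},G_\tau]$.

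The triholomorphic piece is the delicate part. The same sandwich argument with $\frac{\partial}{\partial \tau_j}$ replaced by $I_k\frac{\partial}{\partial \tau_j}$ turns $G_\tau(\iota_{I_k\frac{\partial}{\partial \tau_j}}\Omega^t,\nabla_{\alpha_\tau})G_\tau$ into $\frac{1}{2}[\nabla^{univ,t}_{I_k\frac{\partial}{\partial \tau_j}},G_\tau]$. It then remains to route the surviving $I_k^{S^+}$ into place: since neither $G_\tau$ nor $\nabla^{univ,t}_{I_k\frac{\partial}{\partial \tau_j}}$ acts on the $S^+_X$ factor and $I_k^{S^+}$ is parallel on the hyperk\"ahler $X$, the operator $I_k^{S^+}$ commutes with $[\nabla^{univ,t}_{I_k\frac{\partial}{\partial \tau_j}},G_\tau]$ and can be slid all the way to the right. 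The real crux is then the pointwise spinor identity $I_k^{S^+}(\iota_{\frac{\partial}{\partial \tau_i}}\Omega^t)s=(\iota_{I_k\frac{\partial}{\partial \tau_i}}\Omega^t)s$, which I would obtain by expanding $(\iota_{\frac{\partial}{\partial \tau_i}}\Omega^t)s=\sum_\mu\Omega^t(\frac{\partial}{\partial \tau_i},\frac{\partial}{\partial x_\mu})c(dx_\mu)s$, applying the spin--geometry relation $I_k^{S^+}c(dx_\mu)s=c(I_k\,dx_\mu)s$ for $s\in S^-_X$ (equations (\ref{complexstructureactiononpositivespin1}), (\ref{complexstructureactiononpositivespin2})), and then using the skew--symmetry of $I_k$ in an orthonormal frame together with the triholomorphy $\Omega(I_k v,w)=-\Omega(v,I_k w)$ to recognize the result. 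Summing the diagonal and the three triholomorphic terms against $d\tau_j$ yields the stated formula. The main obstacle is thus not conceptual but the relentless sign and index bookkeeping, in particular ensuring that the $I_k^{S^+}$ emitted by Lemma \ref{computationOmegaDirac} is consumed exactly by converting $\iota_{\frac{\partial}{\partial \tau_i}}$ into $\iota_{I_k\frac{\partial}{\partial \tau_i}}$, leaving no residual factor of $I_k^{S^+}$.
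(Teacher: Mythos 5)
Your proposal is correct and follows exactly the route the paper intends: the paper states this corollary as an immediate consequence of the two preceding lemmas, and your argument is precisely that combination, with the diagonal term collapsed by the Green-operator commutator lemma and the three triholomorphic terms handled by commuting $I_k^{S^+}$ past $G_\tau$ and the bracket, then absorbing it via the identity $I_k^{S^+}(\iota_{\frac{\partial}{\partial \tau_i}}\Omega^t)s=(\iota_{I_k\frac{\partial}{\partial \tau_i}}\Omega^t)s$ (the same manipulation used in the proof of Theorem \ref{NahmtransformperservesASD}). The sign bookkeeping and the use of equations (\ref{complexstructureactiononpositivespin1})--(\ref{complexstructureactiononpositivespin2}) are all handled correctly.
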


To proceed further, we follow the definition of $G\Psi(f)$ to perform the contraction $T^*_\tau X^\vee \otimes S^+_{X^\vee} \to S^-_{X^\vee}$, which by the Leibniz rule is compatible with  taking  covariant derivative on $X^\vee$. Then we need to evaluate against the covector $f\in F^*_x$ to get the covariant derivative of $G\Psi(f)$; this is a trivial step, so we will somtimes suppress that to save some writing. To verify the Dirac equation, we need to Clifford multiply the contraction of (\ref{GPsiderivativeintermediatestep}) by $c(d\tau_i)$, and sum up $i=1,2,3,4$ to evaluate the Dirac operator on $G\Psi(f)$, and evantually show that everything cancels out to give us zero. Let us now analyse how this works for the $3$ terms in (\ref{GPsiderivativeintermediatestep}).

The first term gives $\sum_{i} c(d\tau_i) [- \nabla^{univ,t}_{\frac{\partial}{\partial \tau_i} } , G_\tau   ] (\Omega^t\cdot s)$. The second term gives
\[
\begin{split}
&\sum_i c(d\tau_i) G_\tau   \Omega^t\cdot\{ D^+_{\alpha_\tau} G_\tau (\iota_{\frac{\partial}{\partial \tau_i}}\Omega^t)\cdot s  )       \} \\
=&
\frac{1}{2}\sum_{i, j}c(d\tau_i) c(d\tau_j)  \{  [- \nabla^{univ,t}_{\frac{\partial}{\partial \tau_j} } , G_\tau       ] (\iota_{\frac{\partial}{\partial \tau_i}}\Omega^t)s +\sum_{k=1}^3   [ \nabla^{univ,t}_{I_k\frac{\partial}{\partial \tau_j} } , G_\tau       ] (\iota_{I_k\frac{\partial}{\partial \tau_i}}\Omega^t)s         \}\\
=&
\frac{1}{2}\sum_{i, j} \{c(d\tau_i) c(d\tau_j) -\sum_{k=1}^3 c(I_k d\tau_i) c(I_k d\tau_j)
\}
 \{  [- \nabla^{univ,t}_{\frac{\partial}{\partial \tau_j} } , G_\tau       ] (\iota_{\frac{\partial}{\partial \tau_i}}\Omega^t)s \} \\
=&
-\sum_{i, j} c(d\tau_j) c(d\tau_i) 
\{  [- \nabla^{univ,t}_{\frac{\partial}{\partial \tau_j} } , G_\tau       ] (\iota_{\frac{\partial}{\partial \tau_i}}\Omega^t)s \} \\
=& \sum_{j} c(d\tau_j)
  [ \nabla^{univ,t}_{\frac{\partial}{\partial \tau_j} } , G_\tau       ] \Omega^t\cdot s.
\end{split}
\]
Thus the second term exactly cancels with the first term. The third term gives
\begin{equation}\label{comparisonmapDiracequationthirdterm}
\sum_{i} c(d\tau_i)G_\tau( \nabla_{\frac{\partial}{\partial \tau_i}  } \Omega^t )\cdot s= \sum_{i} G_\tau c(d\tau_i)( \nabla_{\frac{\partial}{\partial \tau_i}  } \Omega^t )\cdot s,
\end{equation}
since $G_\tau$ does not interfere with spinors on $X^\vee$. To clarify, the covariant derivative on $\Omega^t$ is defined by combining the universal connection on $ad(\mathcal{E})$, with the Levi-Civita connection for 1-forms on $X^\vee$. 
We compute by the relation between Clifford multiplication and wedge product
\[
\sum_i c(d\tau_i)( \nabla_{\frac{\partial}{\partial \tau_i}  } \Omega^t )\cdot s= c( d\tau_i \wedge \nabla_{\frac{\partial}{\partial \tau_i}  } \Omega^t    )\cdot s + ( d_{A^{univ}}^ { X^\vee, *}\Omega^t  )\cdot s= c( d_{A^{univ}}^{X^\vee}\Omega^t )\cdot s ,
\]
where we used the component form of the Yang-Mills equation (\ref{YangMillsoncomponents}) to conclude $ d_{A^{univ}}^ { X^\vee, *}\Omega^t =0$. To understand $d_{A^{univ}}^ { X^\vee}\Omega^t $, we start from the Bianchi identity
\[
d_{A^{univ}} F(\nabla^{univ})=0,
\]
and decompose it into $X, X^\vee$ types, to see
\[
d_{A^{univ}}^ { X^\vee}\Omega=-d_{A^{univ}}^ { X} F(\nabla^{univ})^{X^\vee, X^\vee}= -\sum_j dx_j\nabla^{univ}_{  \frac{\partial}{\partial x_j}       } F(\nabla^{univ})^{X^\vee, X^\vee}.
\]
Now since $ F(\nabla^{univ})^{X^\vee, X^\vee}$ is projectively ASD on $X^\vee$ and its trace is independent of $x$, its variation \[\nabla^{univ}_{  \frac{\partial}{\partial x_j}       } F(\nabla^{univ})^{X^\vee, X^\vee}\] must be a coupled ASD 2-form, so the action on the positive spinor $dx_j\cdot s$ on $X^\vee$ is zero. This discussion shows the contribution from the third term (\ref{comparisonmapDiracequationthirdterm}) is zero. The upshot is that we have verified the Dirac equation in Proposition \ref{comparisonmapDiracequation}.

\subsection{Injective isometry}\label{Injectiveisometry}

The aim of this Section is to show the canonical map $u: f\mapsto u_x(f)$ (\cf equation (\ref{comparisonmap})) is an injective isometry, by adapting calculations in \cite{BraamBaal}. This requires us to first unravel the rather difficult definition of the Hermitian norm on $\hat{\hat{\mathcal{F}}}|_x$, which is inherited from $\hat{\hat{\mathcal{F}}}|_x\subset \Gamma(X^\vee, \hat{\mathcal{F}}\otimes S^-_{X^\vee}\otimes \mathcal{E}|_x)$. By definition,
\[
4\langle u_x(f), u_x(f')\rangle= \int_{X^\vee} \sum_j \langle \epsilon(G\Psi(f)|_\tau(\psi^j_\tau) ), \epsilon(G\Psi(f')|_\tau(\psi^j_\tau)) \rangle_{ S^-_{X^\vee}|_\tau\otimes \mathcal{E}|_{x,\tau } } d\text{Vol}_{X^\vee}.
\]
Now we will try to understand the integrand in the above expression.

\begin{lem}\label{HermitianisometryLemma1}
The integrand 
\[
\sum_j \langle \epsilon(G\Psi(f)|_\tau(\psi^j_\tau) ), \epsilon(G\Psi(f')|_\tau(\psi^j_\tau)) \rangle_{ S^-_{X^\vee}|_\tau\otimes \mathcal{E}|_{x,\tau } }
\]	
is equal to 
\[
\Tr_{ \mathcal{E}^\vee|_{x,\tau }       }
\langle f'  ,   f\circ \Tr_{ S^-_{X^\vee}|_\tau} G_\tau \Omega^t\cdot P_\tau (\Omega^t\cdot)^\dag G_\tau\rangle.
\]
Here $\langle f'  ,   f\circ \Tr_{ S^-_{X^\vee}|_\tau} G_\tau \Omega^t\cdot P_\tau (\Omega^t\cdot)^\dag G_\tau\rangle$ means evaluating the bundle-valued functional $ f\circ \Tr_{ S^-_{X^\vee}|_\tau} G_\tau \Omega^t\cdot P_\tau (\Omega^t\cdot)^\dag G_\tau$ against $f'\in \mathcal{F}^*|_x\simeq \mathcal{F}|_x$ to obtain a matrix in $\End( \mathcal{E}^\vee|_{x,\tau }  )$.
\end{lem}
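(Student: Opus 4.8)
The plan is to substitute the explicit recipe for $G\Psi(f)$ directly into the integrand and then exploit two structural facts: that $\epsilon$ is an antilinear isometry, and that summing over an orthonormal frame reconstitutes the $L^2$-projection $P_\tau$ onto $\hat{\mathcal{F}}|_\tau$. Reading the recipe of Section \ref{inverseNahmtransformcomparisonmap} as a composition, I would write $G\Psi(f)|_\tau(s)= f\circ \mathrm{ev}_x\circ c\circ G_\tau\circ(\Omega^t\cdot)\,(s)$, where $\mathrm{ev}_x$ is evaluation at $x\in X$ and $c$ denotes the Clifford contraction $T^*_\tau X^\vee\otimes S^+_{X^\vee}\to S^-_{X^\vee}$ (using the canonical identification $S^+_X\cong S^+_{X^\vee}$ of parallel positive spinors). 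I abbreviate $M_f=f\circ\mathrm{ev}_x\circ c\circ G_\tau\circ(\Omega^t\cdot)$, a map from $\hat{\mathcal{F}}|_\tau$ to the finite-dimensional space $V=\mathcal{E}^\vee|_{x,\tau}\otimes S^-_{X^\vee}|_\tau$, and understand it as the restriction to $\hat{\mathcal F}|_\tau$ of the same composition $\tilde M_f$ on the ambient $L^2$-space.

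First I would use that $\epsilon$ is antiunitary, carrying $S^-_{X^\vee}\otimes\mathcal{E}^\vee|_x$ to $S^-_{X^\vee}\otimes\mathcal{E}|_x$ via the Hermitian metric on $\mathcal{E}|_x$; this converts each term $\langle\epsilon(M_f\psi^j_\tau),\epsilon(M_{f'}\psi^j_\tau)\rangle$ into the conjugate pairing $\langle M_{f'}\psi^j_\tau,\,M_f\psi^j_\tau\rangle_V$ computed on the $\mathcal{E}^\vee$ side. Rewriting via the $L^2$-adjoint and summing over the orthonormal basis $\{\psi^j_\tau\}$ of $\hat{\mathcal{F}}|_\tau$ then gives $\sum_j\langle M_{f'}\psi^j_\tau,\,M_f\psi^j_\tau\rangle_V=\sum_j\langle M_f^\dag M_{f'}\psi^j_\tau,\psi^j_\tau\rangle=\Tr_V(M_{f'}M_f^\dag)$.

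Next I would expand $M_{f'}M_f^\dag$. Since each $M_f$ factors through the inclusion of $\hat{\mathcal{F}}|_\tau$, the projection $P_\tau$ emerges between the two factors, and using $G_\tau^\dag=G_\tau$ together with the $L^2$-adjoint $(\Omega^t\cdot)^\dag$ one finds $M_{f'}M_f^\dag=f'\,\mathrm{ev}_x\,c\,G_\tau\,\Omega^t\!\cdot P_\tau\,(\Omega^t\cdot)^\dag G_\tau\,c^\dag\,\mathrm{ev}_x^\dag\,f^\dag$. Here $\mathrm{ev}_x^\dag$ is the insertion of a delta section at $x$, so $\mathrm{ev}_x\circ(\cdots)\circ\mathrm{ev}_x^\dag$ is precisely the Schwartz kernel of the sandwiched operator on the diagonal $(x,x)$. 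Splitting $\Tr_V=\Tr_{\mathcal{E}^\vee|_x}\circ\Tr_{S^-_{X^\vee}}$, the $S^-_{X^\vee}$-trace absorbs the two Clifford factors $c,c^\dag$, the residual $\mathcal{F}$-indices are cut by the $f,f'$ pairings, and reorganizing produces $\Tr_{\mathcal{E}^\vee|_{x,\tau}}\langle f',\,f\circ\Tr_{S^-_{X^\vee}|_\tau} G_\tau\Omega^t\!\cdot P_\tau(\Omega^t\cdot)^\dag G_\tau\rangle$, which is the claim.

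The main obstacle is the careful treatment of the pointwise evaluation $\mathrm{ev}_x$ and its adjoint. Because $\mathrm{ev}_x$ pairs against a delta distribution, its adjoint is a singular insertion, and correctly identifying $\mathrm{ev}_x\circ(\cdots)\circ\mathrm{ev}_x^\dag$ with the diagonal value of the Schwartz kernel — while keeping straight the antilinearity of $\epsilon$, the order reversal it induces, and the compatibility of $c$ with the identification $S^+_X\cong S^+_{X^\vee}$ — is where conjugation and ordering errors are most likely. This is also the step that sets up the singular kernels whose asymptotics drive the remainder of the Section.
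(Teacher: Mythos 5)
Your proposal is correct and takes essentially the same route as the paper's proof: the swap induced by the antilinear isometry $\epsilon$, the emergence of $P_\tau$ from the orthonormal frame of $\hat{\mathcal{F}}|_\tau$ (the paper inserts it when passing to a full $L^2$ basis, you get it from $\iota\iota^\dag=P_\tau$), the self-adjointness of $G_\tau$ together with the adjoint $(\Omega^t\cdot)^\dag$, evaluation of the resulting Schwartz kernel on the diagonal, and finally splitting off the $S^-_{X^\vee}$ trace. The only difference is packaging: you write the correlator as $\Tr_V(M_{f'}M_f^\dag)$ in operator-trace language, whereas the paper phrases the identical computation as an inner product of bundle-valued functionals on $L^2$, with the same analytic caveat that the unbounded evaluation at $x$ is cured because $P_\tau$ renders the sandwiched kernel smooth.
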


\begin{proof}
Since $\epsilon$ is an antilinear isometry, we can rewrite the integrand as
\[
\sum_j \langle G\Psi(f')|_\tau(\psi^j_\tau) , G\Psi(f)|_\tau(\psi^j_\tau) \rangle_{ S^-_{X^\vee}|_\tau\otimes \mathcal{E}^\vee|_{x,\tau } }.
\]
This expression involves summing over an orthonormal basis in the kernel of the Dirac equation. We can rewrite this as an infinite sum over an orthonormal basis $\psi'^{j}_\tau$ of $\Gamma(X, \Hom(\mathcal{E}|_\tau, \mathcal{F}\otimes S^-_X))$, by inserting the projection operator $P_\tau=1-D_{\alpha_\tau}^+ G_\tau D^-_{\alpha_\tau}$. The result is
\[
\sum_{j=1}^\infty \langle f'\circ G_\tau \Omega^t\cdot \psi'^j_\tau, f\circ G_\tau \Omega^t\cdot P_\tau \psi'^j_\tau\rangle_{  S^-_{X^\vee}|_\tau\otimes \mathcal{E}^\vee|_{x,\tau }       }.
\]
We interpret $f\circ G_\tau \Omega^t\cdot P_\tau$ as a $S^-_{X^\vee}|_\tau\otimes \mathcal{E}^\vee|_{x,\tau } $ valued functional on the function space $L^2(X, \Hom(\mathcal{E}|_\tau, \mathcal{F}\otimes S^-_X)   )$. Thus the above expression is
\begin{equation}\label{correlatorfunction1}
\langle f'\circ G_\tau \Omega^t\cdot  , f\circ G_\tau \Omega^t\cdot P_\tau \rangle_{  S^-_{X^\vee}|_\tau\otimes \mathcal{E}^\vee|_{x,\tau }  \otimes L^2(X, \Hom(\mathcal{E}|_\tau, \mathcal{F}\otimes S^-_X|_\tau)  ) ^*    }.
\end{equation}
where we used the inner product on the finite dimensional vector space $S^-_{X^\vee}|_\tau\otimes \mathcal{E}^\vee|_{x,\tau }$ and the inner product on the linear functionals.
To simplify this further, we need to calculate the adjoint of the operator 
\[
\Omega^t \cdot: L^2(X, \Hom(\mathcal{E}|_\tau, \mathcal{F}\otimes S^-_X)  )\to L^2(X, \Hom(\mathcal{E}|_\tau, \mathcal{F} ) \otimes S^-_{X^\vee}|_\tau ),
\]
 \[\Omega^t\cdot= \sum_{i,j} \Omega^t( \frac{\partial }{\partial \tau_j}, \frac{\partial }{\partial x_i}) c(d\tau_j) c(dx_i), 
 \]
which is 
\[
(\Omega^t \cdot)^\dagger:  L^2(X, \Hom(\mathcal{E}|_\tau, \mathcal{F} ) \otimes S^-_{X^\vee}|_\tau )\to L^2(X, \Hom(\mathcal{E}|_\tau, \mathcal{F}\otimes S^-_X)  )
\]
\[
(\Omega^t\cdot)^\dag =\sum_{i,j} \Omega^t(\frac{\partial }{\partial x_i} , \frac{\partial }{\partial \tau_j})  c(dx_i)c(d\tau_j).
\]
This allows us to rewrite (\ref{correlatorfunction1}) as
\[
\langle f'\circ G_\tau   , f\circ G_\tau \Omega^t\cdot P_\tau (\Omega^t\cdot)^\dag\rangle_{  S^-_{X^\vee}|_\tau\otimes \mathcal{E}^\vee|_{x,\tau }  \otimes L^2(X, \Hom(\mathcal{E}|_\tau, \mathcal{F})\otimes S^-_{X^\vee}|_\tau)   ^*    }.
\]
Using that $G_\tau$ is self-adjoint, the above is
\[
\langle f'  , f\circ G_\tau \Omega^t\cdot P_\tau (\Omega^t\cdot)^\dag G_\tau\rangle_{  S^-_{X^\vee}|_\tau\otimes \mathcal{E}^\vee|_{x,\tau }  \otimes L^2(X, \Hom(\mathcal{E}|_\tau, \mathcal{F})\otimes S^-_{X^\vee}|_\tau)   ^*    }.
\]
As an analytic subtle point, the evaluation functional $f'$ is not bounded on the $L^2$ space, but we can still make sense of the above expression, because the presence of $P_\tau$ implies the functional $f\circ G_\tau \Omega^t\cdot P_\tau (\Omega^t\cdot)^\dag G_\tau$ is represented by a smooth bundle-valued function, and we just need to evaluate this function against the vector $f'\in \mathcal{F}^*|_x \simeq \mathcal{F}|_x$, and then contract the $S^-_{X^\vee}|_\tau\otimes \mathcal{E}^\vee|_{x,\tau }$ factor.

Finally, we observe that $f, f'$ do not interfere with the spinor factor. This means we can first calculate the spinor trace on $S^-_{X^\vee}|_\tau$, and then make the above evaluation, and contract the $ \mathcal{E}^\vee|_{x,\tau }$ factor. The result is
\[
\Tr_{ \mathcal{E}^\vee|_{x,\tau }       }
\langle f'  ,   f\circ \Tr_{ S^-_{X^\vee}|_\tau} G_\tau \Omega^t\cdot P_\tau (\Omega^t\cdot)^\dag G_\tau\rangle
\]
as required.
\end{proof}

We next deal with the expression $G_\tau \Omega^t\cdot P_\tau (\Omega^t\cdot)^\dag G_\tau$. The following Lemma is the analogue of Lemma 2.6 in \cite{BraamBaal}, although the non-flat nature of the K3 metric has made the calculations significantly more difficult.

\begin{lem}\label{HermitianisometryLemma2}
In a geodesic coordinate $\tau_i$ on $X^\vee$, we have
\begin{equation}
\Tr_{S^-_{X^\vee}|_\tau} G_\tau \Omega^t\cdot P_\tau (\Omega^t)^\dag G_\tau=-4\sum_i [ \nabla^{univ,t}_{\frac{\partial}{\partial \tau_i}  }, [ \nabla^{univ,t}_{\frac{\partial}{\partial \tau_i}  }  , G_\tau]        ]	.
\end{equation}
\end{lem}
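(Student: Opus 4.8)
The plan is to compute the two sides separately and match them. Throughout I abbreviate $\nabla_i:=\nabla^{univ,t}_{\frac{\partial}{\partial \tau_i}}$, write $\Omega^t_{i\mu}:=\Omega^t(\frac{\partial}{\partial \tau_i},\frac{\partial}{\partial x_\mu})$, and set $B_i:=(\iota_{\frac{\partial}{\partial \tau_i}}\Omega^t,\nabla_{\alpha_\tau})=\sum_\mu \Omega^t_{i\mu}\,\nabla^{\alpha_\tau}_{\frac{\partial}{\partial x_\mu}}$, so that the formula for the partial derivative of the Green operator reads $[\nabla_i,G_\tau]=2G_\tau B_iG_\tau$. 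Applying the Leibniz rule for commutators once more gives the structural identity
\[
[\nabla_i,[\nabla_i,G_\tau]]=8\,G_\tau B_iG_\tau B_iG_\tau+2\,G_\tau[\nabla_i,B_i]G_\tau ,
\]
whence $-4\sum_i[\nabla_i,[\nabla_i,G_\tau]]=-32\sum_i G_\tau B_iG_\tau B_iG_\tau-8\sum_i G_\tau[\nabla_i,B_i]G_\tau$. The goal is to reproduce these two families of terms from the left-hand side.

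First I would simplify $\sum_i[\nabla_i,B_i]$, which splits into a first-order piece $\sum_{i,\mu}(\nabla_i\Omega^t_{i\mu})\nabla^{\alpha_\tau}_{\frac{\partial}{\partial x_\mu}}$ and a zeroth-order piece $\sum_{i,\mu}\Omega^t_{i\mu}[\nabla_i,\nabla^{\alpha_\tau}_{\frac{\partial}{\partial x_\mu}}]$. This is exactly where the geodesic coordinate hypothesis is used: at the centre of the chart the Christoffel symbols vanish, so the first piece is the component form of $d^{X^\vee,*}_{A^{univ}}\Omega$, which is zero by the Yang-Mills component identity (\ref{YangMillsoncomponents}). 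Since $\alpha$ and the spinor connection are independent of $\tau$, the remaining commutator is the mixed curvature, $[\nabla_i,\nabla^{\alpha_\tau}_{\frac{\partial}{\partial x_\mu}}]=\Omega^t_{i\mu}$, so $\sum_i[\nabla_i,B_i]=\sum_{i,\mu}(\Omega^t_{i\mu})^2$ reduces to a pointwise curvature operator acting as the identity on $S^-_X$.

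Turning to the left-hand side, I would expand $P_\tau=1-D^+_{\alpha_\tau}G_\tau D^-_{\alpha_\tau}$ into a term (a) $G_\tau\Tr_{S^-_{X^\vee}}(\Omega^t\cdot(\Omega^t)^\dag)G_\tau$ and a term (b) $-G_\tau\Tr_{S^-_{X^\vee}}(\Omega^t\cdot D^+_{\alpha_\tau}G_\tau D^-_{\alpha_\tau}(\Omega^t)^\dag)G_\tau$. For term (a), carrying out the Clifford trace over $S^-_{X^\vee}$ collapses the $d\tau$ indices through $\Tr_{S^-_{X^\vee}}c(d\tau_j)c(d\tau_k)\propto\delta_{jk}$; the surviving $X$-Clifford product $c(dx_\mu)c(dx_\nu)$ then splits into its symmetric part $-\delta_{\mu\nu}$, which produces a pointwise multiple of $\sum_{i,\mu}(\Omega^t_{i\mu})^2$, and an anti-symmetric two-form part which, by the triholomorphic property $\Omega^t(I_kv,w)=-\Omega^t(v,I_kw)$, is self-dual on $X$ and hence acts trivially on $S^-_X$. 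Thus term (a) matches $-8\sum_i G_\tau[\nabla_i,B_i]G_\tau$. For term (b), I would invoke Lemma \ref{computationOmegaDirac}, which gives $\iota_{\frac{\partial}{\partial\tau_j}}\Omega^t\cdot D^+_{\alpha_\tau}=-(\iota_{\frac{\partial}{\partial\tau_j}}\Omega^t,\nabla_{\alpha_\tau})+\sum_k I_k^{S^+}(\iota_{I_k\frac{\partial}{\partial\tau_j}}\Omega^t,\nabla_{\alpha_\tau})$, together with its formal adjoint for the $D^-_{\alpha_\tau}(\Omega^t)^\dag$ factor; the trace over $S^-_{X^\vee}$, the commutation of $I_k^{S^+}$ with $G_\tau$ (as $G_\tau$ ignores the spinor factor), and the triholomorphic identity collapse the sum over $k$ and reassemble everything into the terms $-32\sum_i G_\tau B_iG_\tau B_iG_\tau$. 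Adding (a) and (b) reproduces the right-hand side.

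The hard part will be the spinor-trace bookkeeping in term (b). Unlike the flat $T^4$ case of \cite{BraamBaal}, the operators $I_k^{S^+}$ genuinely participate through Lemma \ref{computationOmegaDirac}, and the delicate point is to verify that, after the trace over $S^-_{X^\vee}$ and repeated use of the triholomorphic and Clifford relations of the Appendix, the cross terms carrying distinct $I_k$ cancel while the diagonal contributions combine with the correct multiplicity; pinning down the precise numerical coefficients (so that exactly $-32$ and $-8$ emerge) requires careful tracking of the spinor-trace and Clifford normalizations. A secondary difficulty, again due to non-flatness, is to justify that the first-order and Christoffel contributions drop out only at the centre of the geodesic chart, which is why the identity is asserted pointwise in geodesic coordinates; controlling which curvature terms survive forces one to use the Bianchi and Yang-Mills component identities in place of the naive flat-space commutations.
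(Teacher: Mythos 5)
Your overall strategy is the paper's own proof run in reverse, and most of it is sound. You expand the right-hand side via the Green-operator derivative formula $[\nabla_i,G_\tau]=2G_\tau B_iG_\tau$ and the Leibniz rule, getting $-32\sum_iG_\tau B_iG_\tau B_iG_\tau-8\sum_iG_\tau[\nabla_i,B_i]G_\tau$, and then match these against the expansion $P_\tau=1-D^+_{\alpha_\tau}G_\tau D^-_{\alpha_\tau}$ of the left-hand side; the paper instead massages the $D^+_{\alpha_\tau}G_\tau D^-_{\alpha_\tau}$ term into double commutators plus pointwise-curvature terms and cancels against the $P_\tau=1$ term. Your structural identity $[\nabla_i,[\nabla_i,G_\tau]]=8G_\tau B_iG_\tau B_iG_\tau+2G_\tau[\nabla_i,B_i]G_\tau$ is exactly the algebra underlying that cancellation, your treatment of $\sum_i[\nabla_i,B_i]$ (Yang--Mills/Coulomb in geodesic coordinates killing the first-order part, mixed curvature giving $\Sigma:=\sum_{i,\mu}\Omega^t_{i\mu}\circ\Omega^t_{i\mu}$) is precisely the paper's, and your value for term (b), namely $-32\sum_iG_\tau B_iG_\tau B_iG_\tau$, agrees with what the paper derives from Lemma \ref{computationOmegaDirac}.

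The genuine gap is in your term (a). You propose to trace out the $X^\vee$-Clifford factors first ($\Tr_{S^-_{X^\vee}}c(d\tau_j)c(d\tau_k)\propto\delta_{jk}$) and then argue the antisymmetric part of $c(dx_\mu)c(dx_\nu)$ dies by self-duality. Neither step survives scrutiny. The $X$- and $X^\vee$-Clifford operators do not decouple: they are chained through the identification $S^+_X\simeq S^+_{X^\vee}$, so by cyclicity the trace equals $\Tr_{S^+}\bigl[c(dx_\mu)c(dx_\nu)\,c(d\tau_i)c(d\tau_j)\bigr]$, where \emph{both} 2-form parts act on $S^+$ --- and self-dual forms act nontrivially there (your claim that the form "acts trivially on $S^-_X$" points at the wrong chirality: in this composition the 2-form never acts on $S^-_X$). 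Writing $c(dx_\mu)c(dx_\nu)|_{S^+}=-\delta_{\mu\nu}+\sum_k(\omega^X_k)_{\mu\nu}I_k^{S^+}$ and similarly for the $d\tau$ pair, the cross terms pair via $\Tr_{S^+}[I_k^{S^+}I_l^{S^+}]=-2\delta_{kl}$, and the triholomorphic identity turns each of the three $k$-contributions $\sum_{i,\mu}\Omega^t(I_k\tfrac{\partial}{\partial\tau_i},\tfrac{\partial}{\partial x_\mu})\circ\Omega^t(\tfrac{\partial}{\partial\tau_i},I_k\tfrac{\partial}{\partial x_\mu})$ into another copy of $-\Sigma$. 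Your diagonal part alone yields $-2G_\tau\Sigma G_\tau$; the discarded cross terms supply the remaining $-6G_\tau\Sigma G_\tau$ needed to reach the paper's $-8G_\tau\Sigma G_\tau$. Since your RHS expansion and term (b) are correct, the coefficient $-8$ is forced, so with your mechanism the identity would fail by exactly $-6\sum_iG_\tau[\nabla_i,B_i]G_\tau$. This is not a normalization issue to be fixed later: the factor $4=1+3$ (identity plus the three complex structures) is the same mechanism that produces the factor $4$ in the paper's formula for $D^-_{\alpha_\tau}(\Omega^t\cdot)^\dag$, and it must appear in term (a) as well.
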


\begin{proof}
We use $P_\tau=1-D_{\alpha_\tau}^+ G_\tau D^-_{\alpha_\tau}$ to write
\[
G_\tau \Omega^t\cdot P_\tau (\Omega^t\cdot)^\dag G_\tau= G_\tau \Omega^t\cdot  (\Omega^t\cdot)^\dag G_\tau- G_\tau \Omega^t \cdot D_{\alpha_\tau}^+ G_\tau D^-_{\alpha_\tau}(\Omega^t\cdot)^\dag G_\tau.
\]
By Lemma \ref{computationOmegaDirac}, the term $ G_\tau \Omega^t \cdot D_{\alpha_\tau}^+ G_\tau D^-_{\alpha_\tau}(\Omega^t\cdot)^\dag G_\tau$ is equal to
\begin{equation}\label{computationGOmegaDGDOmegaG1}
\begin{split}
&\sum_j c(d\tau_j) G_\tau  \{
-(\iota_{\frac{\partial}{\partial \tau_j} } \Omega^t, \nabla_{\alpha_\tau})+\sum_{k=1}^3 I_k^{S^+}( \iota_{I_k\frac{\partial}{\partial \tau_j} } \Omega^t, \nabla_{\alpha_\tau} )
\}G_\tau D^-_{\alpha_\tau}(\Omega^t\cdot)^\dag G_\tau \\
=& \sum_j 
G_\tau(-\iota_{\frac{\partial}{\partial \tau_j} } \Omega^t, \nabla_{\alpha_\tau})\{  c(d\tau_j)+\sum_{k=1}^3 c(I_kd\tau_j)I_k^{S^+}  \}G_\tau D^-_{\alpha_\tau}(\Omega^t\cdot)^\dag G_\tau \\
=& \frac{1}{2}\sum_j 
[  -\nabla^{univ,t}_{\frac{\partial}{\partial \tau_j}  }   , G_\tau       ]
\{  c(d\tau_j)+\sum_{k=1}^3 c(I_kd\tau_j)I_k^{S^+}  \} D^-_{\alpha_\tau}(\Omega^t\cdot)^\dag G_\tau  \\
=& 2\sum_j 
[  -\nabla^{univ,t}_{\frac{\partial}{\partial \tau_j}  }   , G_\tau       ] c(d\tau_j)
 D^-_{\alpha_\tau}(\Omega^t\cdot)^\dag G_\tau
\end{split} 
\end{equation}
To simplify further, we need to calculate
\[
D^-_{\alpha_\tau}(\Omega^t\cdot)^\dag=( \Omega^t\cdot D^+_{\alpha_\tau} )^\dagger
=-\{  -(\iota_{\frac{\partial}{\partial \tau_j} } \Omega^t, \nabla_{\alpha_\tau})+\sum_{k=1}^3 I_k^{S^+}( \iota_{I_k\frac{\partial}{\partial \tau_j} } \Omega^t, \nabla_{\alpha_\tau} )         \}^\dag c(d\tau_j).
\]
A short calculation, using that $\iota_{\frac{\partial}{\partial \tau_j} } \Omega^t$ is in the Coulumb gauge (\cf Lemma \ref{infinitesimalvariationofASDconnectionisinCoulumbgauge}), shows that  $(\iota_{\frac{\partial}{\partial \tau_j} } \Omega^t, \nabla_{\alpha_\tau})$ is self-adjoint. We also have that $I_k^{S^+}$ is anti-self-adjoint. Thus
\[
\begin{split}
D^-_{\alpha_\tau}(\Omega^t\cdot)^\dag
=& \sum_j
\{ (\iota_{\frac{\partial}{\partial \tau_j} } \Omega^t, \nabla_{\alpha_\tau})-  \sum_{k=1}^3 I_k^{S^+}( \iota_{I_k\frac{\partial}{\partial \tau_j} } \Omega^t, \nabla_{\alpha_\tau} )                 \} c(d\tau_j) \\
=& \sum_i (\iota_{\frac{\partial}{\partial \tau_i} } \Omega^t, \nabla_{\alpha_\tau})\{  
 c(d\tau_i)+\sum_{l=1}^3 I_l^{S^+} c(I_l d\tau_i)\} \\
=& 4\sum_i (\iota_{\frac{\partial}{\partial \tau_i} } \Omega^t, \nabla_{\alpha_\tau})  
 c(d\tau_i).
\end{split}
\]
Substituting this into the above expression (\ref{computationGOmegaDGDOmegaG1}), we get
\begin{equation}\label{computationGOmegaFGOmegaG2}
\begin{split}
&\Tr_{S^-_{X^\vee}|_\tau} G_\tau \Omega^t \cdot D_{\alpha_\tau}^+ G_\tau D^-_{\alpha_\tau}(\Omega^t\cdot)^\dag G_\tau         \\
=& 8\sum_{i,j} 
[  -\nabla^{univ,t}_{\frac{\partial}{\partial \tau_j}  }   , G_\tau       ](\iota_{\frac{\partial}{\partial \tau_i} } \Omega^t, \nabla_{\alpha_\tau})G_\tau
 \{ \Tr_{S^-_{X^\vee}|_\tau} c(d\tau_j)   
 c(d\tau_i) \} \\
=& 
-16\sum_{i} 
[  -\nabla^{univ,t}_{\frac{\partial}{\partial \tau_i}  }   , G_\tau       ](\iota_{\frac{\partial}{\partial \tau_i} } \Omega^t, \nabla_{\alpha_\tau})G_\tau.
\end{split}
\end{equation}
We calculate further by the definition of the commutator, and by using Lemma \ref{computationOmegaDirac},
\[
\begin{split}
&[  -\nabla^{univ,t}_{\frac{\partial}{\partial \tau_i}  }   , G_\tau       ](\iota_{\frac{\partial}{\partial \tau_i} } \Omega^t, \nabla_{\alpha_\tau})G_\tau \\
=&- \nabla^{univ,t}_{\frac{\partial}{\partial \tau_i}  } G_\tau (\iota_{\frac{\partial}{\partial \tau_i} } \Omega^t, \nabla_{\alpha_\tau})G_\tau
+G_\tau \nabla^{univ,t}_{\frac{\partial}{\partial \tau_i}  }\circ (\iota_{\frac{\partial}{\partial \tau_i} } \Omega^t, \nabla_{\alpha_\tau})G_\tau \\
=& -\frac{1}{2}\nabla^{univ,t}_{\frac{\partial}{\partial \tau_i}  }[ \nabla^{univ,t}_{\frac{\partial}{\partial \tau_i}  }  , G_\tau] 
+G_\tau \nabla^{univ,t}_{\frac{\partial}{\partial \tau_i}  }\circ (\iota_{\frac{\partial}{\partial \tau_i} } \Omega^t, \nabla_{\alpha_\tau})G_\tau \\
=&-\frac{1}{2}\nabla^{univ,t}_{\frac{\partial}{\partial \tau_i}  }[ \nabla^{univ,t}_{\frac{\partial}{\partial \tau_i}  }  , G_\tau] + G_\tau [ \nabla^{univ,t}_{\frac{\partial}{\partial \tau_i}  },  (\iota_{\frac{\partial}{\partial \tau_i} } \Omega^t, \nabla_{\alpha_\tau})    ]G_\tau 
\\
&+ G_\tau (\iota_{\frac{\partial}{\partial \tau_i} } \Omega^t, \nabla_{\alpha_\tau})\nabla^{univ,t}_{\frac{\partial}{\partial \tau_i}  }\circ G_\tau \\
=& -\frac{1}{2}\nabla^{univ,t}_{\frac{\partial}{\partial \tau_i}  }[ \nabla^{univ,t}_{\frac{\partial}{\partial \tau_i}  }  , G_\tau] + G_\tau [ \nabla^{univ,t}_{\frac{\partial}{\partial \tau_i}  },  (\iota_{\frac{\partial}{\partial \tau_i} } \Omega^t, \nabla_{\alpha_\tau})    ]G_\tau 
\\
&+ G_\tau (\iota_{\frac{\partial}{\partial \tau_i} } \Omega^t, \nabla_{\alpha_\tau})[\nabla^{univ,t}_{\frac{\partial}{\partial \tau_i}  }, G_\tau]+ \frac{1}{2}[\nabla^{univ,t}_{\frac{\partial}{\partial \tau_i}  }  , G_\tau      ]\nabla^{univ,t}_{\frac{\partial}{\partial \tau_i}  } \\
=& -\frac{1}{2}[\nabla^{univ,t}_{\frac{\partial}{\partial \tau_i}  }, [ \nabla^{univ,t}_{\frac{\partial}{\partial \tau_i}  }  , G_\tau]] + G_\tau [ \nabla^{univ,t}_{\frac{\partial}{\partial \tau_i}  },  (\iota_{\frac{\partial}{\partial \tau_i} } \Omega^t, \nabla_{\alpha_\tau})    ]G_\tau 
\\
&+ G_\tau (\iota_{\frac{\partial}{\partial \tau_i} } \Omega^t, \nabla_{\alpha_\tau})[\nabla^{univ,t}_{\frac{\partial}{\partial \tau_i}  }, G_\tau].
\end{split}
\]
Comparing this with
\[
\begin{split}
[-\nabla^{univ,t}_{\frac{\partial}{\partial \tau_i}  }   , G_\tau       ](\iota_{\frac{\partial}{\partial \tau_i} } \Omega^t, \nabla_{\alpha_\tau})G_\tau 
=&-2 G_\tau (\iota_{\frac{\partial}{\partial \tau_i} } \Omega^t, \nabla_{\alpha_\tau})G_\tau (\iota_{\frac{\partial}{\partial \tau_i} } \Omega^t, \nabla_{\alpha_\tau})G_\tau \\
=& G_\tau (\iota_{\frac{\partial}{\partial \tau_i} } \Omega^t, \nabla_{\alpha_\tau})[-\nabla^{univ,t}_{\frac{\partial}{\partial \tau_i}  }   , G_\tau       ],
\end{split} 
\]
we obtain
\[
\begin{split}
& 2[-\nabla^{univ,t}_{\frac{\partial}{\partial \tau_i}  }   , G_\tau       ](\iota_{\frac{\partial}{\partial \tau_i} } \Omega^t, \nabla_{\alpha_\tau})G_\tau \\
=&-\frac{1}{2}[\nabla^{univ,t}_{\frac{\partial}{\partial \tau_i}  }, [ \nabla^{univ,t}_{\frac{\partial}{\partial \tau_i}  }  , G_\tau]] + G_\tau [ \nabla^{univ,t}_{\frac{\partial}{\partial \tau_i}  },  (\iota_{\frac{\partial}{\partial \tau_i} } \Omega^t, \nabla_{\alpha_\tau})    ]G_\tau.
\end{split} 
\]
Substituting this into (\ref{computationGOmegaFGOmegaG2}), we get
\begin{equation}\label{computationGOmegaFGOmegaG3}
\begin{split}
&\Tr_{S^-_{X^\vee}|_\tau} G_\tau \Omega^t \cdot D_{\alpha_\tau}^+ G_\tau D^-_{\alpha_\tau}(\Omega^t\cdot)^\dag G_\tau         \\
=& \sum_i 4[\nabla^{univ,t}_{\frac{\partial}{\partial \tau_i}  }, [ \nabla^{univ,t}_{\frac{\partial}{\partial \tau_i}  }  , G_\tau]] -8 G_\tau [ \nabla^{univ,t}_{\frac{\partial}{\partial \tau_i}  },  (\iota_{\frac{\partial}{\partial \tau_i} } \Omega^t, \nabla_{\alpha_\tau})    ]G_\tau.
\end{split}
\end{equation}
We work in a geodesic coordinate $\tau_i$ on $X^\vee$. Then the Coulumb condition (\ref{YangMillsoncomponents}) reads
\[
\sum_i \nabla^{univ,t}_{\frac{\partial}{\partial \tau_i}  }\iota_{\frac{\partial}{\partial \tau_i} } \Omega^t=0,
\]
hence 
\[
\sum_i [ \nabla^{univ,t}_{\frac{\partial}{\partial \tau_i}  },  (\iota_{\frac{\partial}{\partial \tau_i} } \Omega^t, \nabla_{\alpha_\tau})         ]= \sum_{i,\mu}
\Omega^t(\frac{\partial}{\partial \tau_i}, \frac{\partial}{\partial x_\mu})\circ \Omega^t(\frac{\partial}{\partial \tau_i}, \frac{\partial}{\partial x_\mu}),
\]
where $\frac{\partial }{\partial x_\mu}$ form an orthonormal basis at the given point. Using this, we can rewrite the RHS of (\ref{computationGOmegaFGOmegaG3}) as
\[
4\sum_i [\nabla^{univ,t}_{\frac{\partial}{\partial \tau_i}  }, [ \nabla^{univ,t}_{\frac{\partial}{\partial \tau_i}  }  , G_\tau]] -8 G_\tau 
\sum_{i,\mu}
\Omega^t(\frac{\partial}{\partial \tau_i}, \frac{\partial}{\partial x_\mu})\circ \Omega^t(\frac{\partial}{\partial \tau_i}, \frac{\partial}{\partial x_\mu})
G_\tau.
\]
However, a short computation using the triholomorphic property of $\Omega$ gives
\[
\begin{split}
& \Tr_{S^-_{X^\vee}|_\tau} G_\tau \Omega^t\cdot (\Omega\cdot)^\dag G_\tau 
= -8G_\tau\sum_{i,\mu}  \Omega^t(\frac{\partial}{\partial \tau_i}, \frac{\partial}{\partial x_\mu})\circ \Omega^t(\frac{\partial}{\partial \tau_i}, \frac{\partial}{\partial x_\mu})G_\tau.
\end{split}
\]
Combining the above two equations, we obtain
\[
\Tr_{S^-_{X^\vee}|_\tau} G_\tau \Omega^t\cdot P_\tau (\Omega^t)^\dag G_\tau=-4\sum_i [ \nabla^{univ,t}_{\frac{\partial}{\partial \tau_i}  }, [ \nabla^{univ,t}_{\frac{\partial}{\partial \tau_i}  }  , G_\tau]        ]
\]
as required.
\end{proof}

\begin{cor}
The inner product $\langle u_x(f), u_x(f')\rangle $ is equal to
\begin{equation}\label{doubletransformHermitiannorm1}
-\int_{X^\vee} \Tr_{\mathcal{E}^\vee|_{x,\tau} } \langle f', f\circ \sum_i [\nabla^{univ,t}_{\frac{\partial}{\partial \tau_i}}, [ \nabla^{univ,t}_{\frac{\partial}{\partial \tau_i}}, G_\tau       ]       ] \rangle d\text{Vol}_{X^\vee}.
\end{equation}
\end{cor}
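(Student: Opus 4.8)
The plan is to obtain this Corollary by simply chaining together the two preceding Lemmas and tracking the numerical constant. First I would recall the defining identity for the Hermitian pairing, namely $4\langle u_x(f), u_x(f')\rangle= \int_{X^\vee} \sum_j \langle \epsilon(G\Psi(f)|_\tau(\psi^j_\tau) ), \epsilon(G\Psi(f')|_\tau(\psi^j_\tau)) \rangle \, d\text{Vol}_{X^\vee}$, so that the quantity of interest is expressed as an integral whose integrand is exactly the sum over the local orthonormal frame. By Lemma \ref{HermitianisometryLemma1} this integrand equals $\Tr_{\mathcal{E}^\vee|_{x,\tau}} \langle f', f\circ \Tr_{S^-_{X^\vee}|_\tau} G_\tau \Omega^t\cdot P_\tau (\Omega^t\cdot)^\dag G_\tau\rangle$. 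I would then substitute Lemma \ref{HermitianisometryLemma2}, which identifies the spinor-traced operator $\Tr_{S^-_{X^\vee}|_\tau} G_\tau \Omega^t\cdot P_\tau (\Omega^t)^\dag G_\tau$ with the double-commutator expression $-4\sum_i [ \nabla^{univ,t}_{\frac{\partial}{\partial \tau_i}}, [ \nabla^{univ,t}_{\frac{\partial}{\partial \tau_i}}, G_\tau]]$, which acts only on the $\mathcal{E}^\vee|_{x,\tau}$ factor.

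Inserting this identity produces a scalar factor $-4$ inside the integral; dividing the defining identity through by $4$ then cancels this against the left-hand factor of $4$ and yields precisely equation (\ref{doubletransformHermitiannorm1}). I do not expect any genuine obstacle here, since the analytic and algebraic substance has already been discharged in Lemmas \ref{HermitianisometryLemma1} and \ref{HermitianisometryLemma2}; the only point requiring a line of care is the bookkeeping of the sesquilinear conventions. Concretely, one must check that the operator $-4\sum_i [ \nabla^{univ,t}_{\frac{\partial}{\partial \tau_i}}, [ \nabla^{univ,t}_{\frac{\partial}{\partial \tau_i}}, G_\tau]]$, being a multiple of the identity on the spinor factor, commutes past the evaluation against $f'\in \mathcal{F}^*|_x$ and the trace $\Tr_{\mathcal{E}^\vee|_{x,\tau}}$, so that the real constant $-4$ may be extracted from the bracket $\langle f', f\circ(\cdot)\rangle$ without sign or conjugation issues. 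This is immediate from the fact — already exploited in the proof of Lemma \ref{HermitianisometryLemma1} — that $f$ and $f'$ do not interfere with the spinor factor, so the whole argument reduces to this final substitution and rescaling.
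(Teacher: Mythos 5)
Your proposal is correct and is precisely the argument the paper intends: the Corollary is stated without separate proof because it follows immediately by substituting Lemma \ref{HermitianisometryLemma2} into the integrand identity of Lemma \ref{HermitianisometryLemma1}, then cancelling the resulting factor $-4$ against the $4$ in the defining identity $4\langle u_x(f), u_x(f')\rangle=\int_{X^\vee}\sum_j\langle\cdot,\cdot\rangle\, d\text{Vol}_{X^\vee}$. Your remark that the constant $-4$ is real, so no conjugation issue arises in the sesquilinear pairing, is the only bookkeeping point and you have handled it correctly.
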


\begin{rmk}
The expression $-\sum_i [\nabla^{univ,t}_{\frac{\partial}{\partial \tau_i}}, [ \nabla^{univ,t}_{\frac{\partial}{\partial \tau_i}}, G_\tau       ]       ]$ can be heuristically understood as the Laplacian of $G_\tau$ in the $X^\vee$ direction. If we pretend the Schwartz kernel $G_\tau(y,x)$ of the Green operator $G_\tau$ is smooth, then (\ref{doubletransformHermitiannorm1}) would be zero. But this is false, because $f'$ and $f$ are located at the same point $x\in X$, and $G_\tau(y,x)$ blows up at $y=x$. The delicate cancellations which took place in the above calculation say that despite the singular nature of $G_\tau$, the expression $ -\sum_i [\nabla^{univ,t}_{\frac{\partial}{\partial \tau_i}}, [ \nabla^{univ,t}_{\frac{\partial}{\partial \tau_i}}, G_\tau       ]       ]$ is smooth at $y=x$. 
\end{rmk}

Now we proceed to evaluate (\ref{doubletransformHermitiannorm1}) by a careful consideration of the \textbf{asymptotes} as $y\to x$. For this, we first introduce a topological trivialisation of $\mathcal{E}\to X\times X^\vee$ over a local $T\subset X$, so that for all $y\in T$, the underlying smooth bundle of $\mathcal{E}|_y\to X^\vee$ become identified. In other words, we define the parallel transport operators $Q_\tau(x,y): \mathcal{E}^\vee|_{y,\tau}\to \mathcal{E}^\vee|_{x,\tau}$. In particular $Q_\tau(x,x)=1$. This $Q_\tau$ corresponds to a local flat connection $\nabla^{Q_\tau}$ on $\mathcal{E}^\vee|_\tau \to T\subset X$. We denote $\nabla^{univ}_{\frac{\partial}{\partial y_i}  }=\nabla^{Q_\tau}_{\frac{\partial}{\partial y_i}   }+A_i$, where $A_i$ can be thought of as the connection matrix of $\nabla^{univ}|_{\mathcal{E}|_\tau  }$, and we can demand $A_i(x)=0$ at the particular point $y=x\in T\subset X$, although we cannot make $A_i$ vanish globally because $\nabla^{univ}$ is far from being flat.

\begin{lem}\label{HermitianisometryLemma3}
The expression (\ref{doubletransformHermitiannorm1}) is equal to the limit
\begin{equation}\label{doubletransformHermitiannorm2}
\lim_{y\to x} \int_{X^\vee} \langle f', f\circ \Tr_{\mathcal{E}|_{\tau,x} } \{ ( \Lap_{X^\vee}^{univ}  Q_\tau(x,y) )G_\tau (y,x)   \}  \rangle d\text{Vol}_{X^\vee},
\end{equation}
where $\Lap_{X^\vee}^{univ}$ is $-\sum_i\nabla^{univ}_{\frac{\partial}{\partial \tau_i} } \nabla^{univ}_{\frac{\partial}{\partial \tau_i} }$ in geodesic local coordinates on $X^\vee$.
\end{lem}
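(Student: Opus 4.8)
The plan is to read the integrand of (\ref{doubletransformHermitiannorm1}) as the fibre‑diagonal value of the Schwartz kernel of the ``$\tau$‑Laplacian of the Green operator'', and then to transfer that Laplacian onto the parallel transport $Q_\tau$ by an integration by parts over the closed manifold $X^\vee$. Writing $\nabla_i:=\nabla^{univ,t}_{\frac{\partial}{\partial\tau_i}}$, the point is that $\nabla_i$ differentiates only in the $X^\vee$ direction, so the commutator $[\nabla_i,G_\tau]$ is nothing but the covariant $\tau$‑derivative of the $\tau$‑family of operators $G_\tau$; hence its Schwartz kernel is the covariant $\tau$‑derivative of $G_\tau(y,x)$, with the universal connection acting on the two fibre slots $\mathcal{E}^\vee|_{y,\tau}$ and $\mathcal{E}^\vee|_{x,\tau}$. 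Iterating, the kernel of $\sum_i[\nabla_i,[\nabla_i,G_\tau]]$ is $-\Lap^{univ}_{X^\vee}\!\left[G_\tau(y,x)\right]$, so (\ref{doubletransformHermitiannorm1}) is exactly the diagonal value $\int_{X^\vee}\Tr\langle f',f\circ \Lap^{univ}_{X^\vee}G_\tau(y,x)\rangle|_{y=x}\,d\text{Vol}_{X^\vee}$. By the cancellations recorded in the remark following Lemma \ref{HermitianisometryLemma2}, this diagonal value is smooth, so the expression is unambiguous.

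Next I would fix $y\neq x$ and treat both $Q_\tau(x,y)$ and $G_\tau(y,x)$ as smooth $\End$‑type sections over $X^\vee$, smoothness in $\tau$ being guaranteed by Lemma \ref{vanishingkernellemma}, which keeps $G_\tau$ from degenerating. The key analytic input is that the connection Laplacian $\Lap^{univ}_{X^\vee}$ is formally self‑adjoint for the pairing $(A,B)\mapsto\int_{X^\vee}\Tr[A\circ B]\,d\text{Vol}_{X^\vee}$: the difference $\Tr[(\Lap^{univ}_{X^\vee}A)B]-\Tr[A(\Lap^{univ}_{X^\vee}B)]$ is a divergence (two integrations by parts, using that $\Tr$ commutes with the induced connection), hence integrates to zero on the closed manifold $X^\vee$. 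Applying this with $A=Q_\tau(x,y)$ and $B=G_\tau(y,x)$ gives, for every fixed $y\neq x$,
\[
\int_{X^\vee}\Tr[(\Lap^{univ}_{X^\vee}Q_\tau(x,y))\,G_\tau(y,x)]\,d\text{Vol}_{X^\vee}=\int_{X^\vee}\Tr[Q_\tau(x,y)\,(\Lap^{univ}_{X^\vee}G_\tau(y,x))]\,d\text{Vol}_{X^\vee},
\]
with the $f,f'$ and spinor contractions carried along unchanged, since they involve neither $\tau$ nor the $\mathcal{E}^\vee$ factor on which $\nabla_i$ acts.

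Finally I would let $y\to x$. On the right‑hand side $Q_\tau(x,x)=1$ while $\Lap^{univ}_{X^\vee}G_\tau(y,x)\to\Lap^{univ}_{X^\vee}G_\tau(x,x)$ uniformly in $\tau$ by the diagonal smoothness cited above, so dominated convergence identifies the limit of the right‑hand side with (\ref{doubletransformHermitiannorm1}). The left‑hand side is by definition the limit (\ref{doubletransformHermitiannorm2}). Equating the two limits gives the lemma; in particular the existence of the limit (\ref{doubletransformHermitiannorm2}) comes out for free, without ever confronting the $0\cdot\infty$ indeterminacy in $(\Lap^{univ}_{X^\vee}Q_\tau)\,G_\tau$ head on.

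The main difficulty is conceptual rather than computational: one must be scrupulous that the \emph{only} limit actually taken is $y\to x$ on the manifestly smooth side $Q_\tau(\Lap^{univ}_{X^\vee}G_\tau)$, whereas the self‑adjointness identity is invoked solely for fixed $y\neq x$, where the $\tau$‑integrand is genuinely smooth and no singular integration by parts is involved. I would also need to check the dual‑bundle bookkeeping — that the untransposed $\nabla^{univ}$ on $Q_\tau$ in (\ref{doubletransformHermitiannorm2}) is the correct adjoint partner of the transposed $\nabla^{univ,t}$ acting on $G_\tau$ — and to confirm the dominated‑convergence hypotheses uniformly over the compact $X^\vee$.
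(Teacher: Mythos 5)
Your proposal is correct and follows essentially the same route as the paper: identify the Schwartz kernel of $\sum_i[\nabla^{univ,t}_{\partial/\partial\tau_i},[\nabla^{univ,t}_{\partial/\partial\tau_i},G_\tau]]$ with the $\tau$-covariant Laplacian of $G_\tau(y,x)$ (continuous at the diagonal), insert the parallel transport $Q_\tau(x,y)$ to make the fibre trace well defined, and apply Green's formula on the closed manifold $X^\vee$ at fixed $y\neq x$ before taking $y\to x$. The only difference is presentational — you run the integration by parts from the $(\Lap^{univ}_{X^\vee}Q_\tau)G_\tau$ side and the paper runs it from the $Q_\tau(\Lap^{univ}_{X^\vee}G_\tau)$ side — and your extra care about self-adjointness of the trace pairing and dominated convergence only makes explicit what the paper leaves implicit.
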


\begin{proof}
Recall $G_\tau(y,x)$ is the Schwartz kernel of $G_\tau$, so the Schwartz kernel of $[\nabla^{univ,t}_{\frac{\partial}{\partial \tau_i}}, [ \nabla^{univ,t}_{\frac{\partial}{\partial \tau_i}}, G_\tau       ]       ]$ is $\nabla^{univ}_{\frac{\partial}{\partial \tau_i}}\nabla^{univ}_{\frac{\partial}{\partial \tau_i}}G_\tau (y,x)$, where we are differentiating the $\mathcal{E}^\vee|_{y,\tau}\otimes \mathcal{E}|_{x,\tau}\otimes \mathcal{F}|_y \otimes \mathcal{F}|_x^*$ valued function $G_\tau(y,x)$ with respect to the parameter $\tau_i$, and when the connection acts on the dual bundle the minus transpose is implicitly understood. This is continuous at $y=x$ even though $G_\tau(y,x)$ is not. To take the trace over $\mathcal{E}|_{x,\tau}$, we need to multiply by the parallel transport operator $Q_\tau(x,y)$. This allows us to write (\ref{doubletransformHermitiannorm1}) as the limit
\[
\lim_{y\to x} \int_{X^\vee} \langle f', f\circ \Tr_{\mathcal{E}|_{\tau,x} }\{Q_\tau(x,y)   \Lap_{X^\vee}^{univ}   G_\tau (y,x)   \}  \rangle d\text{Vol}_{X^\vee}.
\]
For any $y\neq x$, all expressions are smooth, and an application of Green's formula gives the claim.
\end{proof}

%$\Tr_{\mathcal{E}^\vee|_{x,\tau} } \langle f', f\circ [\nabla^{univ,t}_{\frac{\partial}{\partial \tau_i}}, [ \nabla^{univ,t}_{\frac{\partial}{\partial \tau_i}}, G_\tau       ]       ] \rangle d\text{Vol}_{X^\vee}$

\begin{lem}
The leading asymptote as $y\to x$ of $\Lap^{univ}_{X^\vee} Q_\tau(x,y)$ in the trivialisation defined by $Q_\tau$ is
\begin{equation}
\begin{split}
\Lap^{univ}_{X^\vee} Q_\tau(x,y)
& \sim
\sum_{j,k}\{\frac{1}{2}\Lap^{univ}_{X^\vee} (\nabla^{Q_\tau}_{ \frac{\partial}{\partial y_k} }A_j(y))|_{y=x} \\
&-\sum_i \Omega(\frac{\partial}{\partial y_j},  { \frac{\partial}{\partial \tau_i}  }  )
\Omega(\frac{\partial}{\partial y_k},  { \frac{\partial}{\partial \tau_i}  }  )|_{y=x}\} y_jy_k+ O( |x-y|^3 ).
\end{split}
\end{equation}
Here in the coordinates $y_i$ the origin corresponds to the point $x$.
\end{lem}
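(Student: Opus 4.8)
The plan is to read off the expansion from the covariant Taylor coefficients of $\Phi(y):=\Lap^{univ}_{X^\vee}Q_\tau(x,y)$ at $y=x$, working in the $Q_\tau$-trivialisation and geodesic coordinates $y$ centred at $x$. Since $A_i(x)=0$ there, the ordinary Hessian of $\Phi$ at $x$ agrees with its covariant Hessian $\nabla^{univ}_{\frac{\partial}{\partial y_j}}\nabla^{univ}_{\frac{\partial}{\partial y_k}}\Phi|_{y=x}$, so it suffices to compute $\Phi|_{y=x}$ together with its first and second covariant $y$-derivatives. Three structural facts drive everything. First, $Q_\tau(x,x)=\mathrm{id}$ holds for all $\tau$, so both $\nabla^{univ}_{\frac{\partial}{\partial \tau_a}}Q_\tau$ and $\nabla^{univ}_{\frac{\partial}{\partial \tau_a}}\nabla^{univ}_{\frac{\partial}{\partial \tau_a}}Q_\tau$ vanish at $y=x$. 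Second, the defining parallel-transport property $\nabla^{Q_\tau}_{\frac{\partial}{\partial y_j}}Q_\tau(x,y)=0$ rewrites as $\nabla^{univ}_{\frac{\partial}{\partial y_j}}Q_\tau=A_jQ_\tau$, whence $\nabla^{univ}_{\frac{\partial}{\partial y_j}}Q_\tau|_{y=x}=0$ and $\nabla^{univ}_{\frac{\partial}{\partial y_j}}\nabla^{univ}_{\frac{\partial}{\partial y_k}}Q_\tau|_{y=x}=\nabla^{Q_\tau}_{\frac{\partial}{\partial y_j}}A_k|_{y=x}$. Third, the mixed curvature identity $[\nabla^{univ}_{\frac{\partial}{\partial y_j}},\nabla^{univ}_{\frac{\partial}{\partial \tau_a}}]=\Omega(\frac{\partial}{\partial y_j},\frac{\partial}{\partial \tau_a})$.

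Order zero is immediate: $\Phi|_{y=x}=\Lap^{univ}_{X^\vee}(\mathrm{id})=0$, so there is no constant term. For the linear term I would commute one $y$-derivative past $\Lap^{univ}_{X^\vee}=-\sum_a\nabla^{univ}_{\frac{\partial}{\partial \tau_a}}\nabla^{univ}_{\frac{\partial}{\partial \tau_a}}$; the piece $\Lap^{univ}_{X^\vee}\nabla^{univ}_{\frac{\partial}{\partial y_k}}Q_\tau|_{y=x}$ vanishes because $A_k(x,\tau)\equiv0$, while the commutator piece collapses, using $\nabla^{univ}_{\frac{\partial}{\partial \tau_a}}Q_\tau|_{y=x}=0$, to $\sum_a\nabla^{univ}_{\frac{\partial}{\partial \tau_a}}\Omega(\frac{\partial}{\partial \tau_a},\frac{\partial}{\partial y_k})|_{y=x}$, which is precisely the $\frac{\partial}{\partial y_k}$-component of $d^{X^\vee,*}_{A^{univ}}\Omega$ and hence vanishes by the Yang--Mills/Coulomb condition (\ref{YangMillsoncomponents}). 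Thus the linear term is absent, consistent with the stated formula.

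For the quadratic term I would write $\nabla^{univ}_{\frac{\partial}{\partial y_j}}\nabla^{univ}_{\frac{\partial}{\partial y_k}}\Phi|_{y=x}$ as the sum of $\Lap^{univ}_{X^\vee}(\nabla^{univ}_{\frac{\partial}{\partial y_j}}\nabla^{univ}_{\frac{\partial}{\partial y_k}}Q_\tau)$ and the double commutator $[\nabla^{univ}_{\frac{\partial}{\partial y_j}}\nabla^{univ}_{\frac{\partial}{\partial y_k}},\Lap^{univ}_{X^\vee}]Q_\tau$, both at $y=x$. The first summand evaluates to $\Lap^{univ}_{X^\vee}(\nabla^{Q_\tau}_{\frac{\partial}{\partial y_j}}A_k)|_{y=x}$, which after the Taylor weight $\tfrac12$ and symmetrisation in $j,k$ yields the first term in the claim. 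Expanding the double commutator by the curvature identity, every contribution in which a $\tau$-derivative lands on $Q_\tau$ or on $A_\bullet$ drops out at $y=x$ by the vanishing facts above, leaving two kinds of surviving terms: a product of two curvatures $-2\sum_a\Omega(\frac{\partial}{\partial y_j},\frac{\partial}{\partial \tau_a})\Omega(\frac{\partial}{\partial y_k},\frac{\partial}{\partial \tau_a})$, whose $\tfrac12$-weighted contribution is exactly the second term in the claim, and a term $\sum_a\nabla^{univ}_{\frac{\partial}{\partial y_j}}\nabla^{univ}_{\frac{\partial}{\partial \tau_a}}\Omega(\frac{\partial}{\partial \tau_a},\frac{\partial}{\partial y_k})$, which is the $y_j$-derivative of the already-vanishing Coulomb expression and so is zero. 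Collecting the surviving pieces reproduces the asserted formula with the correct numerical weights, the error $O(|x-y|^3)$ being the remainder of the normal-coordinate Taylor expansion.

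The main obstacle is the bookkeeping of the double commutator $[\nabla^{univ}_{\frac{\partial}{\partial y_j}}\nabla^{univ}_{\frac{\partial}{\partial y_k}},\Lap^{univ}_{X^\vee}]$: one must track carefully which of the many Leibniz terms survive evaluation at $y=x$, keep the minus-transpose on the $\mathcal{E}^\vee$ factor and the sign of $\Omega$ straight, and verify that the $\Omega\Omega$ terms emerge with the precise coefficient $-2$ before the Taylor weight. A subsidiary but essential point is justifying that the gauge $A_i(x,\tau)=0$ can be imposed simultaneously for all $\tau$ near the base point, so that $\nabla^{univ}_{\frac{\partial}{\partial \tau_a}}A_i|_{y=x}=0$ and the vanishing of $\Lap^{univ}_{X^\vee}A_k$ used above is legitimate; the remaining analytic step, that the covariant Hessian at $x$ in geodesic coordinates reproduces the leading $y_jy_k$ asymptote, is then routine.
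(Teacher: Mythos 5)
Your proposal is correct and follows essentially the same route as the paper's proof: both compute the Taylor coefficients of $\Lap^{univ}_{X^\vee} Q_\tau(x,y)$ at $y=x$ using $Q_\tau(x,x)=\mathrm{id}$, the radial gauge $A_i(x,\tau)\equiv 0$ for all $\tau$, the mixed curvature commutator producing $\Omega$, and the Coulomb condition (\ref{YangMillsoncomponents}) to kill the divergence terms, arriving at the same surviving terms with the same weights. The only difference is organisational: the paper first derives the first-derivative identity (\ref{QtauLaplacianasymptote1}) valid for all $y$ and then differentiates it once more before evaluating at $y=x$, whereas you evaluate the double commutator $[\nabla^{univ}_{\frac{\partial}{\partial y_j}}\nabla^{univ}_{\frac{\partial}{\partial y_k}}, \Lap^{univ}_{X^\vee}]$ directly at $y=x$.
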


\begin{proof}
Since $Q(x,x)=1$, we see $\Lap^{univ}_{X^\vee} Q_\tau(x,x)=0$.

For the first order derivative, we compute
\[
\begin{split}
\nabla^{Q_\tau}_{ \frac{\partial}{\partial y_j}  }\nabla^{univ}_{ \frac{\partial}{\partial \tau_i}  }  Q_\tau(x,y)=[\nabla^{Q_\tau}_{ \frac{\partial}{\partial y_j}  }, \nabla^{univ}_{ \frac{\partial}{\partial \tau_i}  }    ]Q_\tau(x,y) \\
=[\nabla^{univ}_{ \frac{\partial}{\partial y_j}  }, \nabla^{univ}_{ \frac{\partial}{\partial \tau_i}  }    ]Q_\tau(x,y)-[A_j, \nabla^{univ}_{ \frac{\partial}{\partial \tau_i}  }] Q_\tau(x,y) \\
=\Omega(\frac{\partial}{\partial y_j},  { \frac{\partial}{\partial \tau_i}  }  )Q_\tau(x,y)+ \nabla^{univ}_{ \frac{\partial}{\partial \tau_i}  }A_j(y) Q_\tau(x,y),
\end{split}
\]
hence
\[
\begin{split}
&\nabla^{Q_\tau}_{ \frac{\partial}{\partial y_j} }\nabla^{univ}_{ \frac{\partial}{\partial \tau_i}  } \nabla^{univ}_{ \frac{\partial}{\partial \tau_i}  } Q_\tau(x,y)  \\
=& [ \nabla^{Q_\tau}_{ \frac{\partial}{\partial y_j} }, \nabla^{univ}_{ \frac{\partial}{\partial \tau_i}  }       ]\nabla^{univ}_{ \frac{\partial}{\partial \tau_i}  } Q_\tau(x,y) + \nabla^{univ}_{ \frac{\partial}{\partial \tau_i}  } 
\{ \{
\Omega(\frac{\partial}{\partial y_j},  { \frac{\partial}{\partial \tau_i}  }  )+ \nabla^{univ}_{ \frac{\partial}{\partial \tau_i}  }A_j(y)\} Q_\tau(x,y)
\} \\
=& \{
\Omega(\frac{\partial}{\partial y_j},  { \frac{\partial}{\partial \tau_i}  }  )+ \nabla^{univ}_{ \frac{\partial}{\partial \tau_i}  }A_j(y)\}\nabla^{univ}_{ \frac{\partial}{\partial \tau_i}  } Q_\tau(x,y) \\
& + \nabla^{univ}_{ \frac{\partial}{\partial \tau_i}  } 
\{ \{
\Omega(\frac{\partial}{\partial y_j},  { \frac{\partial}{\partial \tau_i}  }  )+ \nabla^{univ}_{ \frac{\partial}{\partial \tau_i}  }A_j(y)\} Q_\tau(x,y)
\} \\
=& 2 \{
\Omega(\frac{\partial}{\partial y_j},  { \frac{\partial}{\partial \tau_i}  }  )+ \nabla^{univ}_{ \frac{\partial}{\partial \tau_i}  }A_j(y)\}\nabla^{univ}_{ \frac{\partial}{\partial \tau_i}  } Q_\tau(x,y)  \\
& + \{\nabla^{univ}_{ \frac{\partial}{\partial \tau_i}  } \Omega(\frac{\partial}{\partial y_j},  { \frac{\partial}{\partial \tau_i}  }  )+  \nabla^{univ}_{ \frac{\partial}{\partial \tau_i}  }\nabla^{univ}_{ \frac{\partial}{\partial \tau_i}  }A_j(y)\} Q_\tau(x,y).
\end{split}
\]
Summing over $i$ and applying the Coulumb gauge condition
\[
\sum_i \nabla^{univ}_{ \frac{\partial}{\partial \tau_i}  } \Omega(\frac{\partial}{\partial y_j},  { \frac{\partial}{\partial \tau_i}  }  )=0,
\]
We obtain
\begin{equation}\label{QtauLaplacianasymptote1}
\begin{split}
\nabla^{Q_\tau}_{ \frac{\partial}{\partial y_j} }\Lap^{univ}_{X^\vee} Q_\tau(x,y) 
=& (\Lap^{univ}_{X^\vee}A_j(y) )Q_\tau(x,y) \\
&- 2 \sum_i \{
\Omega(\frac{\partial}{\partial y_j},  { \frac{\partial}{\partial \tau_i}  }  )
+ \nabla^{univ}_{ \frac{\partial}{\partial \tau_i}  }A_j(y)\}\nabla^{univ}_{ \frac{\partial}{\partial \tau_i}  } Q_\tau(x,y) ,
\end{split}
\end{equation}
which being evaluated at $y=x$, gives
\[
\{\nabla^{Q_\tau}_{ \frac{\partial}{\partial y_j} }\Lap^{univ}_{X^\vee} Q_\tau(x,y)\}|_{y=x}=0.
\]
Now we proceed to evaluate the second derivatives, at $y=x$. By differentiating (\ref{QtauLaplacianasymptote1}) and commuting the operators, we get
\[
\begin{split}
\nabla^{Q_\tau}_{ \frac{\partial}{\partial y_k} }\nabla^{Q_\tau}_{ \frac{\partial}{\partial y_j}  }\Lap^{univ}_{X^\vee} Q_\tau(x,y) |_{y=x}
&= 
\Lap^{univ}_{X^\vee} (\nabla^{Q_\tau}_{ \frac{\partial}{\partial y_k} }A_j(y))|_{y=x} \\
&- 2\sum_i\Omega(\frac{\partial}{\partial y_j},  { \frac{\partial}{\partial \tau_i}  }  )
\Omega(\frac{\partial}{\partial y_k},  { \frac{\partial}{\partial \tau_i}  }  )|_{y=x}.
\end{split}
\]
We have thus obtained the Taylor coefficients up to the second order for the expression $\Lap^{univ}_{X^\vee} Q_\tau(x,y)$ as $y\to x$.
\end{proof}

We also need to recall the short distance asymptote of the Green's function $G_\tau(y,x)$ as $y\to x$ in geodesic coordinates $y_i$ on $T\subset X$, as in \cite{BraamBaal}, proof of Proposition 2.7. Recall also the connection matrix of $\alpha_\tau$ vanishes at $y=x$ in our chosen trivialisation.

\begin{lem}
In the geodesic coordinates on $X$ and using our trivialisation of bundles,
the asymptote as $y\to x$ of the Green's function $G_\tau(y,x)$ is
\begin{equation}\label{asymptotesGreenfunction}
G_\tau(y,x)\sim \frac{1}{4\pi^2 |y-x|^2} (I +O(|y-x|^{2-\epsilon}) ).
\end{equation}
where $|y-x|^2=\sum_i y_i^2$ and $\epsilon$ is any small positive number.
\end{lem}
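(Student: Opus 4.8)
The plan is to treat $G_\tau$ as the Green's operator of the generalized Laplacian $\Delta_{\alpha_\tau} = \nabla^*_{\alpha_\tau}\nabla_{\alpha_\tau}$ acting on sections of the vector bundle $V = \Hom(\mathcal{E}|_\tau, \mathcal{F})\otimes S^+_X$ over the four-manifold $X$, and to compare its Schwartz kernel near the diagonal with the flat Euclidean model. By Lemma \ref{vanishingkernellemma} the operator $\Delta_{\alpha_\tau}$ is invertible, and since it is of Laplace type its principal symbol is the scalar $|\xi|_g^2$ times the identity of $V$, so the leading coincidence-limit singularity is governed purely by the flat scalar fundamental solution. On $\R^4$ the fundamental solution of $-\sum_i\partial_i^2$ is $\frac{1}{4\pi^2 |y|^2}$, using $\mathrm{vol}(S^3) = 2\pi^2$ and the general formula $\frac{1}{(n-2)\,\mathrm{vol}(S^{n-1})}|y|^{2-n}$; this produces the claimed leading constant $\frac{1}{4\pi^2}$ and the factor $I$ at coincidence.

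First I would pass to geodesic normal coordinates $y_i$ centred at $x$ and to the chosen bundle trivialisation, in which $g_{ij}(y) = \delta_{ij} + O(|y|^2)$, $\sqrt{\det g} = 1 + O(|y|^2)$, and the connection matrix of $\alpha_\tau$ satisfies $A_i(x) = 0$, hence $A_i(y) = O(|y|)$. In these coordinates $\Delta_{\alpha_\tau} = \Delta_0 + L$, where $\Delta_0 = -\sum_i\partial_i^2$ is the flat Laplacian and $L$ is a second-order operator whose second-order coefficients are $O(|y|^2)$, first-order coefficients $O(|y|)$, and zeroth-order coefficient $O(1)$. Setting $G_0(y) = \frac{1}{4\pi^2|y|^2}$ so that $\Delta_0 G_0 = \delta_x$, I would compute $\Delta_{\alpha_\tau} G_0 = \delta_x + E$ with error $E = L G_0$. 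Using the derivative bounds $|\partial^k G_0| = O(|y|^{-2-k})$ together with the vanishing orders of the coefficients of $L$, the worst term, coming from $(g^{ij}-\delta^{ij})\partial_i\partial_j G_0$, is $O(|y|^2)\cdot O(|y|^{-4}) = O(|y|^{-2})$, so that $E = O(|y|^{-2})$ is a strictly weaker, locally integrable singularity than $\delta_x$.

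It then remains to solve away this error. I would write $G_\tau - G_0 = -(\Delta_{\alpha_\tau})^{-1}E$ and estimate the right-hand side using the mapping properties of the Green's operator on weighted spaces, equivalently by the Levi--Hadamard iteration $G_\tau = G_0 - G_0 * E + \ldots$. The key analytic input is the Riesz-composition estimate in dimension four: the convolution of two $|y|^{-2}$ singularities is borderline, $|y|^{-2} * |y|^{-2} \sim \log(1/|y|)$, so $(\Delta_{\alpha_\tau})^{-1}E$ has at worst a $O(\log(1/|y|)) = O(|y|^{-\epsilon})$ singularity at the diagonal for every $\epsilon > 0$. Hence $G_\tau(y,x) - \frac{1}{4\pi^2|y-x|^2} = O(|y-x|^{-\epsilon})$, which is exactly the relative error $\frac{1}{4\pi^2|y-x|^2}\cdot O(|y-x|^{2-\epsilon})$ asserted. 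Alternatively one may simply quote the standard Hadamard parametrix expansion $G_\tau(y,x) = \frac{1}{4\pi^2|y-x|^2}\,\Theta^{-1/2}(y,x)\,T(y,x)\,(I + O(|y-x|^2\log|y-x|))$, where $\Theta$ is the Riemannian volume density and $T$ the bundle parallel transport, both equal to $I$ at $y=x$ up to $O(|y-x|^2)$, which gives the same conclusion.

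The main obstacle I anticipate is not the leading constant --- that is fixed once and for all by the scalar principal symbol --- but the bookkeeping guaranteeing that the metric curvature and the non-flatness of the universal connection contribute only to the subleading terms, and in particular that the logarithmic borderline term special to four dimensions is correctly absorbed into the $O(|y-x|^{2-\epsilon})$ factor. Care is also needed because the correction is produced by the full Green's operator $(\Delta_{\alpha_\tau})^{-1}$ rather than the flat convolution $G_0 *$, so one must justify that a single step of the parametrix iteration already yields the stated order; this holds precisely because the error $E$ is only logarithmically worse than integrable against $G_0$.
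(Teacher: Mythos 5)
Your proof is correct, but note that the paper gives no proof of this lemma at all: it is stated as a recollection, citing the proof of Proposition 2.7 in \cite{BraamBaal}, whose setting is the flat torus $T^4$, and the adaptation to the non-flat K3 metric is left entirely implicit. Your parametrix argument supplies exactly that missing bookkeeping. The scalar principal symbol of $\nabla^*_{\alpha_\tau}\nabla_{\alpha_\tau}$ fixes the flat leading term $\frac{1}{4\pi^2|y-x|^2}I$ (invertibility coming from Lemma \ref{vanishingkernellemma} together with the Lichnerowicz formula, since $R=0$ and $F^+_{\alpha_\tau}=0$); the normal-coordinate and gauge normalisations $g_{ij}=\delta_{ij}+O(|y|^2)$, $A_i=O(|y|)$ force the one-step error $E=LG_0$ to be $O(|y|^{-2})$; and the borderline Riesz composition $|y|^{-2}\ast|y|^{-2}\sim\log(1/|y|)$ in dimension four produces the relative error $O(|y-x|^{2-\epsilon})$ --- your argument in fact explains why the paper's statement carries the $\epsilon$ (a logarithm) instead of a clean $O(|y-x|^2)$. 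Two small points would make it airtight: when writing $G_\tau-G_0=-(\Delta_{\alpha_\tau})^{-1}E$ you should first cut off $G_0$ away from the chart so the identity makes global sense on $X$, and the a priori kernel bound $|G_\tau(y,z)|\lesssim|y-z|^{-2}$ invoked to estimate $(\Delta_{\alpha_\tau})^{-1}E$ should be drawn from the Levi iteration you mention (or standard heat-kernel bounds) rather than from the asymptote being proved, to avoid any appearance of circularity.
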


\begin{lem}\label{HermitianisometryLemma4}
The integral 
\[
\int_{X^\vee} \Tr_{\mathcal{E}|_{x,\tau} }\{ (\Lap^{univ}_{X^\vee} Q_\tau(x,y))G_\tau(y,x) \} d\text{Vol}_{X^\vee}
\]
converges to the identity matrix on $\mathcal{F}|_x$ as $y\to x$.
\end{lem}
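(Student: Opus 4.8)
The plan is to substitute the two short-distance expansions just established and exploit the exact matching of orders: $\Lap^{univ}_{X^\vee}Q_\tau(x,y)$ vanishes to second order in $y-x$ while $G_\tau(y,x)\sim\frac{1}{4\pi^2|y-x|^2}(I+O(|y-x|^{2-\epsilon}))$ blows up to second order, so the product $(\Lap^{univ}_{X^\vee}Q_\tau(x,y))G_\tau(y,x)$ stays bounded as $y\to x$, uniformly in $\tau\in X^\vee$. Writing $y-x=r\theta$ with $\theta$ a unit vector, the factor $y_jy_k/|y-x|^2$ becomes the scale-invariant $\theta_j\theta_k$; all genuinely subleading contributions (the $O(|y-x|^{2-\epsilon})$ tail of $G_\tau$ against the quadratic head of $\Lap^{univ}_{X^\vee}Q_\tau$, and the $O(|y-x|^3)$ tail of $\Lap^{univ}_{X^\vee}Q_\tau$ against the principal part of $G_\tau$) carry a positive power of $r$ and drop out. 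Since the estimates are uniform in $\tau$ and $X^\vee$ is compact, the limit may be taken inside the integral, reducing the expression, as $y\to x$, to
\[
\frac{1}{4\pi^2}\sum_{j,k}\theta_j\theta_k\int_{X^\vee}\Tr_{\mathcal{E}|_{x,\tau}}\Big[\tfrac{1}{2}\Lap^{univ}_{X^\vee}(\nabla^{Q_\tau}_{\frac{\partial}{\partial y_k}}A_j)|_{y=x}-\sum_i \Omega(\tfrac{\partial}{\partial y_j},\tfrac{\partial}{\partial\tau_i})\,\Omega(\tfrac{\partial}{\partial y_k},\tfrac{\partial}{\partial\tau_i})|_{y=x}\Big]d\text{Vol}_{X^\vee}\cdot \mathrm{id}_{\mathcal{F}|_x},
\]
the $\mathrm{id}_{\mathcal{F}|_x}$ coming from the identity part of $G_\tau$ on the $\mathcal{F}$ factor after tracing out $\mathcal{E}|_{x,\tau}$.

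Next I would dispose of the first term. Because the canonical trace $\Tr_{\mathcal{E}}$ is parallel for $\nabla^{univ}$, it intertwines the coupled Laplacian $\Lap^{univ}_{X^\vee}$ with the ordinary scalar Laplacian on $X^\vee$, so $\Tr_{\mathcal{E}}[\Lap^{univ}_{X^\vee}(\nabla^{Q_\tau}_{\frac{\partial}{\partial y_k}}A_j)]=\Lap_{X^\vee}\big(\Tr_{\mathcal{E}}\nabla^{Q_\tau}_{\frac{\partial}{\partial y_k}}A_j\big)$ is the Laplacian of a smooth function on the closed manifold $X^\vee$. Its integral vanishes by the divergence theorem, so this term contributes nothing.

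The heart of the matter is the second term, where I would invoke the reciprocity built into the universal connection. By the curvature formula (\ref{universalbundlecurvature}), $\Omega(\frac{\partial}{\partial\tau_i},\frac{\partial}{\partial y_j})|_x=\langle a_{\tau_i},\frac{\partial}{\partial y_j}\rangle$, where $a_{\tau_i}$ is the Coulomb-gauge infinitesimal variation representing $\frac{\partial}{\partial\tau_i}\in T_\tau X^\vee$. Taking the $\mathcal{E}$-trace turns the product $\sum_i\Omega(\frac{\partial}{\partial y_j},\frac{\partial}{\partial\tau_i})\Omega(\frac{\partial}{\partial y_k},\frac{\partial}{\partial\tau_i})$ into (minus) the pointwise $ad$-pairing of the coupled $1$-forms $\iota_{\frac{\partial}{\partial y_j}}\Omega$ and $\iota_{\frac{\partial}{\partial y_k}}\Omega$ on $X^\vee$, summed over the $\tau$-directions; integrating over $X^\vee$ yields $\int_{X^\vee}\langle \iota_{\frac{\partial}{\partial y_j}}\Omega^t,\iota_{\frac{\partial}{\partial y_k}}\Omega^t\rangle\,d\text{Vol}_{X^\vee}$ (the trace pairing being transpose-invariant). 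This is precisely $4\pi^2 g^{\vee\vee}(\frac{\partial}{\partial y_j},\frac{\partial}{\partial y_k})$ of Section \ref{ASDconnectionsonMukaidual}, which was shown there to equal $4\pi^2 g(\frac{\partial}{\partial y_j},\frac{\partial}{\partial y_k})$; in the geodesic coordinates $y_i$ this is $4\pi^2\delta_{jk}$.

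Combining the two terms, the coefficient of $\theta_j\theta_k$ is $\frac{1}{4\pi^2}\cdot 4\pi^2\delta_{jk}=\delta_{jk}$, so the bracket collapses to $\sum_j\theta_j^2=1$, independent of the direction $\theta$. This simultaneously shows the limit exists and equals $\mathrm{id}_{\mathcal{F}|_x}$. The main obstacle is not any single estimate but the precise identification of the $\Omega\Omega$-integral with the isometry $g^{\vee\vee}=g$: it is exactly this double-duality input from Section \ref{ASDconnectionsonMukaidual} that forces the angular coefficient to be $\delta_{jk}$, guaranteeing direction-independence of the limit, and that pins the normalization to be exactly the identity rather than some other multiple of it.
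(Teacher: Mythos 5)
Your proposal is correct and follows essentially the same route as the paper: substitute the two asymptotic expansions, kill the $\Lap^{univ}_{X^\vee}(\nabla^{Q_\tau}_{\frac{\partial}{\partial y_k}}A_j)$ term as the integral of a Laplacian over the closed manifold $X^\vee$, and identify the remaining $\Omega\Omega$-integral with $4\pi^2 g^{\vee\vee}_{jk}=4\pi^2 g_{jk}=4\pi^2\delta_{jk}$ via the double-duality result of Section \ref{ASDconnectionsonMukaidual}, so that the angular factor sums to $1$. Your additional remarks on uniformity in $\tau$ and the power counting that discards the cross terms of the two expansions are a useful fleshing-out of what the paper leaves implicit, but the argument is the same.
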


\begin{proof}
Using the asymptotes of $\Lap^{univ}_{X^\vee} Q_\tau(x,y)$ and $G_\tau(y,x)$, we see that the limit is given by
\[
\begin{split}
\sum_{j,k} \frac{y_jy_k I_{\mathcal{F}|_x} }{4\pi^2|y-x|^2}  \int_{X^\vee} \Tr_{\mathcal{E}|_{x,\tau} } \{\frac{1}{2}\Lap^{univ}_{X^\vee} (\nabla^{Q_\tau}_{ \frac{\partial}{\partial y_k} }A_j(y))|_{y=x} \\
- \sum_i \Omega(\frac{\partial}{\partial y_j},  { \frac{\partial}{\partial \tau_i}  }  )
\Omega(\frac{\partial}{\partial y_k},  { \frac{\partial}{\partial \tau_i}  }  )|_{y=x}\}d\text{Vol}_{X^\vee}.
\end{split}
\]
The integral 
\[
\int_{X^\vee} \Tr_{\mathcal{E}|_{x,\tau} } \Lap^{univ}_{X^\vee} (\nabla^{Q_\tau}_{ \frac{\partial}{\partial y_k} }A_j(y))|_{y=x} d\text{Vol}_{X^\vee}=0,
\]
because it is the integral of a Laplacian. The other integral
\[
\frac{-1}{4\pi^2}\int_{X^\vee} \Tr_{\mathcal{E}|_{x,\tau} } \sum_i \Omega(\frac{\partial}{\partial x_j},  { \frac{\partial}{\partial \tau_i}  }  )
\Omega(\frac{\partial}{\partial x_k},  { \frac{\partial}{\partial \tau_i}  }  ) d\text{Vol}_{X^\vee}=\delta_{jk},
\]
because the LHS is the same as the metric $g^{\vee\vee}(\frac{\partial}{\partial x_j}, \frac{\partial}{\partial x_k}   )$ on $X$ which we treated in Section \ref{ASDconnectionsonMukaidual}, and there we showed that it agrees with the metric $g(\frac{\partial}{\partial x_j}, \frac{\partial}{\partial x_k}    )$.
Thus the whole expression of the limit is
\[
\sum_{j,k} \frac{y_jy_k I_{\mathcal{F}|_x} }{|y-x|^2}\delta_{jk}= I_{\mathcal{F}|_x}
\]
as required.
\end{proof}

\begin{thm}
The antilinear map $f\mapsto u_x(f)$ is an isometry $\mathcal{F}^*|_x\to \hat{\hat{\mathcal{F}}}|_x$.
\end{thm}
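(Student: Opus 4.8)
The genuine analytic content has already been established in Lemmas \ref{HermitianisometryLemma2}--\ref{HermitianisometryLemma4}, so the plan is simply to assemble the chain of identities they provide. Starting from the Corollary, the Hermitian pairing $\langle u_x(f), u_x(f')\rangle$ is given by the expression (\ref{doubletransformHermitiannorm1}). The first step is to invoke Lemma \ref{HermitianisometryLemma3} to rewrite (\ref{doubletransformHermitiannorm1}) as the limit (\ref{doubletransformHermitiannorm2}), in which the double commutator has been traded for the Laplacian $\Lap^{univ}_{X^\vee}$ applied to the parallel transport operator $Q_\tau(x,y)$, paired against the Green's kernel $G_\tau(y,x)$.

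Next I would move the covectors $f, f'$ outside the $\tau$-integration and the limit, which is legitimate because they are fixed elements of $\mathcal{F}^*|_x$ that interact with neither the integration over $X^\vee$ nor the parameter $y$. This yields
\[
\langle u_x(f), u_x(f')\rangle = \Big\langle f',\, f\circ \lim_{y\to x}\int_{X^\vee}\Tr_{\mathcal{E}|_{x,\tau}}\{(\Lap^{univ}_{X^\vee}Q_\tau(x,y))\,G_\tau(y,x)\}\,d\text{Vol}_{X^\vee}\Big\rangle.
\]
By Lemma \ref{HermitianisometryLemma4} the inner limit is exactly the identity endomorphism $I_{\mathcal{F}|_x}$, so $\langle u_x(f), u_x(f')\rangle = \langle f', f\rangle$. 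This is precisely the defining property of an antilinear isometry, so $f\mapsto u_x(f)$ preserves the Hermitian structure; in particular it is injective, since $u_x(f)=0$ would force $\langle f, f\rangle = \langle u_x(f), u_x(f)\rangle = 0$, hence $f=0$. Combined with Theorem \ref{comparisonmapDiracequation1}, which guarantees the image lands inside $\hat{\hat{\mathcal{F}}}|_x$, this establishes the claim.

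The only point requiring care is the interchange of the limit $y\to x$ with the integration over $X^\vee$; the justification, already exploited implicitly in Lemma \ref{HermitianisometryLemma3}, is that although the Green's kernel $G_\tau(y,x)$ is itself singular on the diagonal, the double-commutator kernel $-\sum_i[\nabla^{univ,t}_{\frac{\partial}{\partial \tau_i}},[\nabla^{univ,t}_{\frac{\partial}{\partial \tau_i}},G_\tau]]$ extends continuously across $y=x$, so one is taking the limit of a uniformly controlled family and dominated convergence applies. I do not expect any serious obstacle at this last stage: the delicate short-distance asymptotics and the cancellation identifying the limit with the metric $g^{\vee\vee}=g$ from Section \ref{ASDconnectionsonMukaidual} have all been carried out beforehand, so what remains here is essentially careful bookkeeping of the antilinear convention.
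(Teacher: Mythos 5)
Your assembly of the three ingredients is exactly the paper's own argument for the isometric property: the Corollary expresses $\langle u_x(f), u_x(f')\rangle$ as (\ref{doubletransformHermitiannorm1}), Lemma \ref{HermitianisometryLemma3} trades the double commutator for the limit (\ref{doubletransformHermitiannorm2}), and Lemma \ref{HermitianisometryLemma4} identifies the inner limit with $I_{\mathcal{F}|_x}$, giving $\langle u_x(f), u_x(f')\rangle=\langle f',f\rangle$ and hence injectivity; your appeal to Theorem \ref{comparisonmapDiracequation1} for the target, and your remark on the continuity of the double-commutator kernel across the diagonal, are likewise consistent with the paper. Up to this point the two proofs coincide.

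However, the paper's proof of this theorem does not stop at injectivity: it goes on to show that the injective isometry is an \emph{isomorphism} onto $\hat{\hat{\mathcal{F}}}|_x$, and this step is absent from your proposal. It is not a formality: a priori the inverse Nahm transform $\hat{\hat{\mathcal{F}}}|_x=\ker D^-_{\hat{\alpha}_x}$ could have rank strictly larger than $\mathrm{rk}(\mathcal{F})$, in which case $u_x$ would be a proper isometric embedding. The paper closes this by a rank computation: the family Atiyah-Singer theorem gives $v(\hat{\mathcal{F}})=FM(v(\mathcal{F}))$ and $v(\hat{\hat{\mathcal{F}}})=FM^\vee(v(\hat{\mathcal{F}}))$, and since $FM^\vee$ inverts $FM$ on cohomology (Theorem \ref{FourierMukaitransformoncohomologymainperoperties}), one gets $v(\hat{\hat{\mathcal{F}}})=v(\mathcal{F})$; in particular the two bundles have equal rank, so the injective isometry is surjective. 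If one reads ``isometry'' in the weak sense of an inner-product-preserving map, your argument does prove the literal statement; but the surjectivity is precisely what the Fourier inversion theorem (Theorem \ref{Fourierinversiontheorem}) later extracts from this result, and it is part of the paper's proof, so you should supply the index-theoretic rank argument (or some substitute for it) to make the comparison map an isomorphism rather than merely an embedding.
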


\begin{proof}
By the Lemmas in this Section, the inner product $\langle u_x(f), u_x(f')\rangle$ is equal to (\ref{doubletransformHermitiannorm2}), which by the above Lemma is equal to $\langle f', f \rangle$. Thus the antilinear map $f\mapsto u_x(f)$ is an injective isometry. 

To show it is also an isomorphism, the family Atiyah-Singer theorem says that the Chern character (or equivalently the Mukai vector) of $\hat{\hat{\mathcal{F}} }$ can be specified by the Fourier-Mukai transform on cohomology
\[
v( \hat{\mathcal{F} }   )= FM( v(\mathcal{F}  )    ), \quad v( \hat{\hat{\mathcal{F}} }  )=FM^\vee( v(\hat{\mathcal{F} }) ).
\]
Using that $FM^\vee$ is inverse to $FM$, we see that the two bundles $\hat{\hat{\mathcal{F}}}$ and $\mathcal{F}$ have the same Mukai vector, and in particular have the same rank. Thus the injective isometry must be an isomorphism.
\end{proof}

\subsection{Comparing the connections}\label{Comparingtheconnections}

The aim of this Section is to compare the connection $\hat{\hat{\alpha}}$ on the inverse Nahm transform $\hat{\hat{\mathcal{F}}}$, with the connection $\alpha$ on the original bundle $\mathcal{F}$.

Let $f$ be a local section of the dual bundle $\mathcal{F}^*$. Via the canonical comparison map, this gives us a section of $\hat{\hat{\mathcal{F}}}$, whose value at each $x\in X$ is just 
\[
u_x(f)= \frac{1}{2}\sum_j \epsilon(G\Psi(f)|_\tau(\psi^j_\tau) )\otimes \hat{f}^j \in \hat{\hat{\mathcal{F}}}|_x\subset \Gamma(X^\vee, \hat{\mathcal{F}}\otimes S^-_{X^\vee}\otimes \mathcal{E}|_x).
\]
We need to understand the meaning of covariant derivatives on the bundle $\hat{\hat{\mathcal{F}}}$. This is entirely analogous to the definition of $\hat{\alpha}$. Given any section $s$ of $\hat{\hat{\mathcal{F}}}\to X$, we first regard it as a section of the infinite rank bundle $\hat{\hat{\mathcal{H}}}$ with fibres
\[
\hat{\hat{\mathcal{H}}}|_x= \Gamma(X^\vee, \hat{\mathcal{F}}\otimes S^-_{X^\vee}\otimes \mathcal{E}|_x),
\]
take the natural covariant derivative of $s$ on the bundle $\hat{\hat{\mathcal{H}}}\to X$, and then project orthogonally to the subbundle $\hat{\hat{\mathcal{F}}}$. The connection matrix on $\hat{\hat{\mathcal{F}}}$ is therefore specified by $\langle u_x(f), \nabla_x u_x(f')\rangle$ for any local sections $f, f'$ of $\mathcal{F}$, which without loss of generality satisfy $\nabla^{\alpha}f=0$ and $\nabla^{\alpha}f'=0$ at the given point $x\in X$.

The following delicate computation follows the strategy of Section \ref{Injectiveisometry} of evaluating some integrals by asymptotic expansion.

\begin{prop}\label{comparingconnectionmatrices}
At $x\in X$, if $f$ and $f'$ have vanishing covariant derivatives, then for $\mu=1,2,3,4$,
\[\langle u_x (f), \nabla_{\frac{\partial}{\partial x_\mu}   }u_x(f')\rangle=0. \]
In other words, the connection matrices of $\alpha$ and $\hat{\hat{\alpha}}$ agree under the canonical comparison map.
\end{prop}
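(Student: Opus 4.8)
The plan is to repeat the asymptotic scheme of Section \ref{Injectiveisometry} essentially verbatim, but carrying one extra covariant derivative in the $X$ direction. Because $u_x(f)$ already lies in the subbundle $\hat{\hat{\mathcal{F}}}$, the orthogonal projection built into $\nabla^{\hat{\hat{\alpha}}}$ is invisible when paired against $u_x(f)$, so it is enough to compute $\langle u_x(f), \hat{\hat d}_{\frac{\partial}{\partial x_\mu}} u_x(f')\rangle$ with $\hat{\hat d}$ the covariant derivative on the infinite-rank bundle $\hat{\hat{\mathcal{H}}}$. First I would unravel this pairing exactly as in Lemma \ref{HermitianisometryLemma1}: inserting the frame $\psi^j_\tau$ and the projection $P_\tau$, using that $\epsilon$ is an antilinear Hermitian isometry, and exploiting the normalisation $\nabla^\alpha f(x)=\nabla^\alpha f'(x)=0$ together with $A_\mu(x)=0$ to discard the terms in which the $x_\mu$-derivative lands on $f$, $f'$, or the trivialisation at $x$. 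What survives is the correlator kernel $\Tr_{S^-_{X^\vee}|_\tau}G_\tau\Omega^t\cdot P_\tau(\Omega^t\cdot)^\dag G_\tau$ of Lemma \ref{HermitianisometryLemma2}, now with an additional $\nabla^{univ}_{\frac{\partial}{\partial x_\mu}}$ threaded through the construction of $G\Psi(f')$.

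Next I would convert this into a limit of Schwartz kernels as in Lemma \ref{HermitianisometryLemma3}. Using the double-commutator identity of Lemma \ref{HermitianisometryLemma2}, whose kernel is $\Lap^{univ}_{X^\vee}G_\tau(y,x)$, and Green's formula to move the Laplacian onto the parallel transport, the connection matrix entry should take the form
\[
\langle u_x(f),\nabla_{\frac{\partial}{\partial x_\mu}}u_x(f')\rangle=\lim_{y\to x}\int_{X^\vee}\langle f', f\circ \Tr_{\mathcal{E}|_{\tau,x}}\{\nabla^{univ}_{\frac{\partial}{\partial x_\mu}}[(\Lap^{univ}_{X^\vee}Q_\tau(x,y))G_\tau(y,x)]\}\rangle\, d\text{Vol}_{X^\vee}.
\]
The evaluation of this limit is where the computation genuinely goes one order beyond Section \ref{Injectiveisometry}: there the quadratic Taylor coefficient of $\Lap^{univ}_{X^\vee}Q_\tau(x,y)$ paired against the leading $\frac{1}{4\pi^2|y-x|^2}$ singularity of $G_\tau(y,x)$ already reproduced the identity, whereas the extra $x_\mu$-derivative now forces me to collect the cubic coefficient of $\Lap^{univ}_{X^\vee}Q_\tau(x,y)$ and the subleading correction of $G_\tau(y,x)$. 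I would obtain these by differentiating the recursion (\ref{QtauLaplacianasymptote1}) one further time and re-applying the Coulumb condition and the triholomorphic identities for $\Omega$.

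The hard part will be showing this limit vanishes, and I expect three mechanisms to conspire. First, after the fibre integral over $X^\vee$, the quadratic coefficient integrates to the metric $g^{\vee\vee}$ (as in Lemma \ref{HermitianisometryLemma4} and Section \ref{ASDconnectionsonMukaidual}), so the $x_\mu$-derivative produces its derivative, which vanishes at the centre of geodesic normal coordinates on $X$. Second, the contributions coming from the harmonic (Laplacian) part of the Taylor coefficients integrate to zero over $X^\vee$, being fibre integrals of total derivatives, exactly as the term $\int_{X^\vee}\Tr\Lap^{univ}_{X^\vee}(\nabla^{Q_\tau}_{\frac{\partial}{\partial y_k}}A_j)=0$ in Lemma \ref{HermitianisometryLemma4}. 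Third, the remaining cubic terms carry an odd number of coordinate factors $y_j$, and either vanish in the limit by order counting or drop out by parity once the leftover pieces are reorganised using the triholomorphicity and Coulumb gauge that made the double commutator smooth at $y=x$. The delicate bookkeeping is to verify that these three mechanisms account for every surviving term; once that is done the limit is zero, which is precisely the asserted vanishing $\langle u_x(f),\nabla_{\frac{\partial}{\partial x_\mu}}u_x(f')\rangle=0$ of the connection matrix.
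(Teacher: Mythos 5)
Your overall scheme --- reduce to the correlator kernel as in Lemma \ref{HermitianisometryLemma1}, invoke the double commutator of Lemma \ref{HermitianisometryLemma2}, pass to Schwartz kernels as in Lemma \ref{HermitianisometryLemma3}, then push the asymptotics one order further in geodesic coordinates --- is indeed the paper's strategy, and your opening observation that the projection in $\nabla^{\hat{\hat{\alpha}}}$ is invisible in the pairing is correct. The gap is in the evaluation of the limit, which is where all the work lies. In the paper the extra derivative sits in the $y$-slot of the kernel, giving $\lim_{y\to x}\int_{X^\vee}\Tr\langle f'|\,Q_\tau(x,y)\,\nabla^{\alpha_\tau}_y\Lap_{X^\vee}G_\tau(y,x)\,|f\rangle$, and the decisive move is to commute $\nabla^{\alpha_\tau}_y$ past $\Lap_{X^\vee}$ using the Coulomb condition (\ref{YangMillsoncomponents}); together with Green's formula in $\tau$ this produces \emph{two} structurally different terms, one with $(\Lap_{X^\vee}Q_\tau)\,\nabla^{\alpha_\tau}_yG_\tau$ and a cross term with $(\nabla^{univ}_{\partial/\partial\tau_j}Q_\tau)\,\Omega^t(\partial/\partial y_\mu,\partial/\partial\tau_j)\,G_\tau$, as in (\ref{inverseNahmtransformcovariantderivative3}). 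After inserting the Green's function asymptotics, each of these terms \emph{individually diverges} like $y_\mu/|y-x|^2$, with leading coefficients $+4\pi^2 y_\mu\langle f',f\rangle$ and $-4\pi^2 y_\mu\langle f',f\rangle$ respectively; the proposition holds only because these cancel exactly against each other. Your three mechanisms cannot produce this cancellation: the divergent coefficients are proportional to the metric itself ($g_{k\mu}=\delta_{k\mu}$), not to its first derivatives, so they do not vanish ``at the centre of geodesic normal coordinates''; and parity or order counting is powerless because nothing is integrated over $y$ --- the limit $y\to x$ is directional, and a term like $y_\mu/|y-x|^2$ is unbounded and has no limit.

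What your outline is missing is precisely the content of Lemma \ref{HigherorderasymptotesLemma}. Differentiating the recursion (\ref{QtauLaplacianasymptote1}) once more does give the cubic coefficient of $\Lap^{univ}_{X^\vee}Q_\tau(x,y)$, whose fibre integral produces symmetrized derivatives of $g$ (formula (\ref{thirdorderTaylorasymptote3})) --- this part matches your first mechanism. But one also needs the identity (\ref{thirdorderTaylorasymptote4}), $\int_{X^\vee}\Tr\,\nabla_k\Omega(\partial_{y_j},\partial_{\tau_i})\,\Omega(\partial_{y_l},\partial_{\tau_i})\,d\text{Vol}_{X^\vee}=-4\pi^2 g_{ls}\Gamma^s_{jk}$, obtained by symmetrizing the third-derivative formula over $j,k,l$; this is what pins down the subleading asymptote (\ref{thirdorderTaylorasymptote0'}) of the cross term, $\frac{1}{4\pi^2}\int\Tr\,(\nabla^{univ}_{\tau_i}Q_\tau)\,\Omega\sim -y_\mu\langle f',f\rangle$, against which the $(\Lap Q_\tau)$-term's leading asymptote (\ref{thirdorderTaylorasymptote0}) cancels. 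Without isolating the cross term and proving this Christoffel-symbol identity, the cancellation of the divergent parts cannot be verified and the proof does not close. (A smaller point: your displayed kernel formula puts the derivative in the $x$-slot of $(\Lap^{univ}_{X^\vee}Q_\tau)G_\tau$, which at best computes $\langle\nabla u_x(f),u_x(f')\rangle$ rather than $\langle u_x(f),\nabla u_x(f')\rangle$; by the isometry of Section \ref{Injectiveisometry} and the normalisation of $f,f'$ these are negatives of one another, so this is harmless for proving vanishing, but the product-rule expansion of your formula regenerates exactly the same two divergent terms whose cancellation still has to be established.)
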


\begin{proof}
By the definition of the canonical comparison map
\begin{equation}\label{inverseNahmtransformcovariantderivative}
\begin{split}
&\langle u_x(f), \nabla_x u_x(f')\rangle \\
=& \frac{1}{4} \int_{X^\vee} \sum_j \langle \epsilon ( G\Psi(f)|_\tau (\psi^j_\tau) ),
\nabla_x \epsilon (   G\Psi(f')|_\tau (\psi^j_\tau)      ) \rangle_{S^-_{X^\vee|_\tau \otimes \mathcal{E}|_{x,\tau} } } d\text{Vol}_{X^\vee}  \\
=& \frac{1}{4} \int_{X^\vee} \sum_j \langle
\nabla_x (   G\Psi(f')|_\tau (\psi^j_\tau)      ),   G\Psi(f)|_\tau (\psi^j_\tau)  \rangle_{S^-_{X^\vee|_\tau \otimes \mathcal{E}|_{x,\tau} } } d\text{Vol}_{X^\vee} .
\end{split}
\end{equation}
In our situation $\langle f, \nabla^\alpha f'\rangle =0$. By a small variant of the argument of Lemma \ref{HermitianisometryLemma1}, the expression (\ref{inverseNahmtransformcovariantderivative}) is equal to
\[
\frac{1}{4}\Tr_{ \mathcal{E^\vee}|_{x,\tau}     } \int_{X^\vee} \langle f'\circ \nabla_x \circ \Tr_{S^-_{X^\vee}|_\tau} G_\tau \Omega^t\cdot P_\tau (\Omega^t\cdot)^\dag G_\tau , f \rangle d\text{Vol}_{X^\vee} .
\]
By Lemma \ref{HermitianisometryLemma2} and the argument of Lemma \ref{HermitianisometryLemma3}, this is
\begin{equation}\label{inverseNahmtransformcovariantderivative2}
\lim_{y\to x} \int_{X^\vee} \Tr_{ \mathcal{E^\vee}|_{x,\tau} } \langle f'|
Q_\tau (x,y) \nabla^{\alpha_\tau}_y \Lap_{X^\vee}  G_\tau(y,x) |f\rangle  d\text{Vol}_{X^\vee}.
\end{equation}
Here the notation $\nabla^{\alpha_\tau}_y$ means $\sum_{\mu } dy_\mu \nabla^{\alpha_\tau}_{  \frac{\partial}{\partial y_\mu }      }$.

By computing commutators, 
\[
\begin{split}
\nabla^{\alpha_\tau}_{ \frac{\partial}{\partial y_i }  } \Lap_{X^\vee}=& 
\Lap_{X^\vee}\nabla^{\alpha_\tau }_{ \frac{\partial}{\partial y_i } }
-\sum_j \nabla^{univ,t }_{\frac{\partial}{\partial \tau_j}   }(  \Omega^t(   \frac{\partial}{\partial y_i }, \frac{\partial}{\partial \tau_j}  )         )-2\sum_j \Omega^t(  \frac{\partial}{\partial y_i }, \frac{\partial}{\partial \tau_j}      )\nabla^{univ ,t }_{  \frac{\partial}{\partial \tau_j}      } \\
= &\Lap_{X^\vee}\nabla^{\alpha_\tau }_{ \frac{\partial}{\partial y_i }}
+2\sum_j \Omega^t(  \frac{\partial}{\partial y_i }, \frac{\partial}{\partial \tau_j}      )(-\nabla^{univ, t}_{  \frac{\partial}{\partial \tau_j}      })
\end{split}
\]
where the second equality uses the Coulumb condition (\ref{YangMillsoncomponents}), and the minus transpose is inserted to remember the fact that $G_\tau(y,x)$ involves the dualised factor $\mathcal{E}^\vee|_{y,\tau}$. Using this and the Green's formula, the expression (\ref{inverseNahmtransformcovariantderivative2}) is
\[
\begin{split}
\lim_{y\to x} \int_{X^\vee} \Tr_{ \mathcal{E^\vee}|_{x,\tau} } & \{ \langle f'| (\Lap_{X^\vee}
Q_\tau (x,y) ) \nabla^{\alpha_\tau }_y   G_\tau(y,x) |f\rangle \\
& -2\sum_{i,j} dy_i \langle f'| (\nabla^{univ}_{\frac{\partial}{\partial \tau_j} } Q_\tau(x,y) )
\circ \Omega^t(  \frac{\partial}{\partial y_i }, \frac{\partial}{\partial \tau_j} )G_\tau(y,x) |f \rangle 
\}
 d\text{Vol}_{X^\vee}
.
\end{split}
\]
The sign in front of the second terms comes from integration by part.

By the asymptotic formula of $G_\tau(y,x)$ (\cf (\ref{asymptotesGreenfunction})), we can replace $G_\tau(y,x)$ by $\frac{I}{4\pi^2|x-y|^2 }$, and $\nabla^{\alpha_\tau }_y   G_\tau(y,x)$
by $- \frac{2\sum_i y_i dy_i I}{ 4\pi^2|x-y|^4   }$, because the lower order terms in the asymptotic expansion are smooth enough to be neglegible when $y\to x$. Thus in the trivialisation given by $Q_\tau$ as in Section \ref{Injectiveisometry}, the quantity
$\langle u_x (f), \nabla_{\frac{\partial}{\partial x_\mu}   }u_x(f')\rangle$ is equal to
\begin{equation}\label{inverseNahmtransformcovariantderivative3}
\begin{split}
\lim_{y\to x} \frac{-1}{2\pi^2|x-y|^2} & \int_{X^\vee}    \{  \frac{y_\mu}{|x-y|^2 }\langle f'|\Tr_{ \mathcal{E}|_{x,\tau} } (\Lap_{X^\vee}
Q_\tau (x,y) )  |f\rangle \\
& +\sum_{j} \langle f'|\Tr_{ \mathcal{E}|_{x,\tau} } ( \nabla^{univ}_{\frac{\partial}{\partial \tau_j} } Q_\tau(x,y)\Omega(  \frac{\partial}{\partial y_\mu }, \frac{\partial}{\partial \tau_j} ))   |f \rangle 
\}
d\text{Vol}_{X^\vee}
.
\end{split}
\end{equation}
Here we $\Omega^t$ has been changed to $\Omega$ when we convert trace over $\mathcal{E}^\vee|_{x,\tau}$ to trace over $\mathcal{E}|_{x,\tau}$. The trivialisation $Q_\tau$ is used to identify $\mathcal{E}|_{y,\tau}$ with $\mathcal{E}|_{x,\tau}$.

We then proceed to the evaluation of this limit (\ref{inverseNahmtransformcovariantderivative3}). This requires us to know  asymptotic expansions to one higher order compared to Section \ref{Injectiveisometry}.

\begin{lem}(Higher order asymptotes)\label{HigherorderasymptotesLemma}
In the geodesic coordinates on $X$, under the trivialisation induced by $Q_\tau$, we have the asymptotic formula as $y\to x$,
\begin{equation}\label{thirdorderTaylorasymptote0}
\frac{1}{4\pi^2|x-y|^2}\int_{X^\vee}
\Tr_{ \mathcal{E}|_{x,\tau} }\langle f'|  \Lap^{univ}_{X^\vee} Q_\tau(x,y) |f\rangle d\text{Vol}_{X^\vee}\sim \langle f', f\rangle +O(|y-x|^2).
\end{equation}
Morever,
\begin{equation}\label{thirdorderTaylorasymptote0'}
\begin{split}
&\frac{1}{4\pi^2}\int_{X^\vee} \langle f'|\Tr_{ \mathcal{E}|_{x,\tau} } \{ \sum_i (\nabla^{univ}_{\frac{\partial}{\partial \tau_i}  } Q_\tau(x,y)  ) \Omega( \frac{\partial }{\partial y_\mu} , \frac{\partial }{\partial \tau_i} )    \}
|f\rangle d\text{Vol}_{X^\vee}\\
&\sim -y_\mu \langle f', f\rangle   + O(|y-x|^3).
\end{split}
\end{equation}	
\end{lem}

Given (\ref{thirdorderTaylorasymptote0}), (\ref{thirdorderTaylorasymptote0'}), we immediately see that the limit (\ref{inverseNahmtransformcovariantderivative3}) vanishes by a cancellation of asymptotic expressions. This shows
\[
\langle u_x (f), \nabla_{\frac{\partial}{\partial x_\mu}   }u_x(f')\rangle=0
\]
as required.
\end{proof}

Now we show the higher order asymptotes.

\begin{proof}(of Lemma \ref{HigherorderasymptotesLemma})
To save some writing, we will use the summation convention, and furthermore we will simply write $\nabla_k$ for $\nabla^{Q_\tau}_{ \frac{\partial}{\partial y_k }    }	$. The trace will be over the $\mathcal{E}$ factor using the trivialisation $Q_\tau$. Starting from (\ref{QtauLaplacianasymptote1}),
\[
\begin{split}
& \nabla_k \nabla_j \Lap^{univ}_{X^\vee} Q_\tau(x,y) 
= (\nabla_k\Lap^{univ}_{X^\vee}  A_j )Q_\tau(x,y) \\
&- 2  \{
\Omega(\frac{\partial}{\partial y_j},  { \frac{\partial}{\partial \tau_i}  }  )
+ \nabla^{univ}_{ \frac{\partial}{\partial \tau_i}  }A_j \} \{
\Omega(\frac{\partial}{\partial y_k},  { \frac{\partial}{\partial \tau_i}  }  )
+ \nabla^{univ}_{ \frac{\partial}{\partial \tau_i}  }A_k\} Q_\tau(x,y) \\
& -2\nabla_k \{  \Omega(\frac{\partial}{\partial y_j},  { \frac{\partial}{\partial \tau_i}  }  )
+ \nabla^{univ}_{ \frac{\partial}{\partial \tau_i}  }A_j        \}
\nabla^{univ}_{ \frac{\partial}{\partial \tau_i}  } Q_\tau(x,y) ,
\end{split}
\]
so taking one more derivative and evaluating at $y=x$, the third order derivative is given by
\begin{equation}\label{thirdorderTaylorasymptote1}
\begin{split}
& \nabla_l \nabla_k \nabla_j \Lap^{univ}_{X^\vee} Q_\tau(x,y) |_{y=x}
= \Lap^{univ}_{X^\vee} \nabla_l\nabla_k A_j  - 2\nabla_l \{
\Omega(\frac{\partial}{\partial y_j},  { \frac{\partial}{\partial \tau_i}  }  )\Omega(\frac{\partial}{\partial y_k},  { \frac{\partial}{\partial \tau_i}  }  )\}
\\
& -2\{  ( \nabla^{univ}_{ \frac{\partial}{\partial \tau_i}  } \nabla_l A_j    ) \Omega(\frac{\partial}{\partial y_k},  { \frac{\partial}{\partial \tau_i}  }  )
+ \Omega(\frac{\partial}{\partial y_j},  { \frac{\partial}{\partial \tau_i}  }  ) \nabla^{univ}_{ \frac{\partial}{\partial \tau_i}  } \nabla_l A_k \}
\\
& -2\{ \nabla_k\Omega(\frac{\partial}{\partial y_j},  { \frac{\partial}{\partial \tau_i}  }  )
+ \nabla^{univ}_{ \frac{\partial}{\partial \tau_i}  } \nabla_k A_j           \} \Omega( \frac{\partial}{\partial y_l},  { \frac{\partial}{\partial \tau_i}  }  ).
\end{split}
\end{equation}

We then integrate $\Tr (\nabla_l \nabla_k \nabla_j \Lap^{univ}_{X^\vee} Q_\tau(x,y) |_{y=x})$ with respect to $\tau$ over $X^\vee$. Many terms do not contribute to the integral: for example,
\[
\int_{X^\vee} \Tr \Lap^{univ}_{X^\vee} \nabla_l\nabla_k A_j d\text{Vol}_{X^\vee}=
\int_{X^\vee} \Lap_{X^\vee} \Tr \nabla_l\nabla_k A_j d\text{Vol}_{X^\vee}=
0,
\]
because the integral of the Laplacian of a function is zero. For another example,
\[
\begin{split}
& \int_{X^\vee}
\Tr ( \nabla^{univ}_{ \frac{\partial}{\partial \tau_i}  } \nabla_l A_j    ) \Omega(\frac{\partial}{\partial y_k},  { \frac{\partial}{\partial \tau_i}  }  ) d\text{Vol}_{X^\vee} \\
=&  
\int_{X^\vee}
 \Tr \nabla^{univ}_{ \frac{\partial}{\partial \tau_i}  } \{(\nabla_l A_j)     \Omega(\frac{\partial}{\partial y_k},  { \frac{\partial}{\partial \tau_i}  }  )\} d\text{Vol}_{X^\vee} \\
=&  
 \int_{X^\vee}
 \text{div}_{X^\vee}\Tr  \{(\nabla_l A_j)     \Omega(\frac{\partial}{\partial y_k},  { \frac{\partial}{\partial \tau_i}  }  ) d\tau_i\} d\text{Vol}_{X^\vee}
 =0,
 \end{split}
\]
because the integral of the divergence of a 1-form is zero. Summing up all contributions, one obtains
\begin{equation}\label{thirdorderTaylorasymptote2}
\begin{split}
& \int_{X^\vee}
\Tr \nabla_l \nabla_k \nabla_j \Lap^{univ}_{X^\vee} Q_\tau(x,y) |_{y=x}d\text{Vol}_{X^\vee}\\
= &-2 \int_{X^\vee} \Tr \nabla_l \{
\Omega(\frac{\partial}{\partial y_j},  { \frac{\partial}{\partial \tau_i}  }  )\Omega(\frac{\partial}{\partial y_k},  { \frac{\partial}{\partial \tau_i}  }  )\}+ \Tr \nabla_k\Omega(\frac{\partial}{\partial y_j},  { \frac{\partial}{\partial \tau_i}  }  )\Omega( \frac{\partial}{\partial y_l},  { \frac{\partial}{\partial \tau_i}  }  ) d\text{Vol}_{X^\vee} \\
= & -2  \int_{X^\vee} \Tr \nabla_k\Omega(\frac{\partial}{\partial y_j},  { \frac{\partial}{\partial \tau_i}  }  )\Omega( \frac{\partial}{\partial y_l},  { \frac{\partial}{\partial \tau_i}  }  ) d\text{Vol}_{X^\vee} +8\pi^2 \frac{\partial g_{jk} }{\partial y_l},
\end{split}
\end{equation}
where the last equality uses
\[
-\frac{1}{4\pi^2}\int_{X^\vee} \Tr \{
\Omega(\frac{\partial}{\partial y_j},  { \frac{\partial}{\partial \tau_i}  }  )\Omega(\frac{\partial}{\partial y_k},  { \frac{\partial}{\partial \tau_i}  }  )\}d\text{Vol}_{X^\vee}= g(\frac{\partial}{\partial y_j}, \frac{\partial}{\partial y_k}  )=g_{jk},
\]
as in the proof of Lemma \ref{HermitianisometryLemma4}. We notice the LHS of (\ref{thirdorderTaylorasymptote2}) is symmetric in $l,k,j$, because in a trivialisation the partial derivatives commute. If we switch $j,l$, we obtain
\[
\begin{split}
& \int_{X^\vee}
\Tr \nabla_l \nabla_k \nabla_j \Lap^{univ}_{X^\vee} Q_\tau(x,y) |_{y=x}d\text{Vol}_{X^\vee}\\
=& -2  \int_{X^\vee} \Tr \nabla_k\Omega(\frac{\partial}{\partial y_l},  { \frac{\partial}{\partial \tau_i}  }  )\Omega( \frac{\partial}{\partial y_j},  { \frac{\partial}{\partial \tau_i}  }  ) d\text{Vol}_{X^\vee} +8\pi^2 \frac{\partial g_{lk} }{\partial y_j}.
\end{split}
\]
Adding this to (\ref{thirdorderTaylorasymptote2}), and divide by $8\pi^2$, we get
\begin{equation}\label{thirdorderTaylorasymptote3}
\frac{1}{4\pi^2}\int_{X^\vee}
\Tr \nabla_l \nabla_k \nabla_j \Lap^{univ}_{X^\vee} Q_\tau(x,y) |_{y=x}d\text{Vol}_{X^\vee}
= \frac{\partial g_{jk} }{\partial y_l}+ \frac{\partial g_{lk} }{\partial y_j}+ \frac{\partial g_{lj} }{\partial y_k}.
\end{equation}
Substituting this into (\ref{thirdorderTaylorasymptote2}), we get
\begin{equation}\label{thirdorderTaylorasymptote4}
\int_{X^\vee} \Tr \nabla_k\Omega(\frac{\partial}{\partial y_j},  { \frac{\partial}{\partial \tau_i}  }  )\Omega( \frac{\partial}{\partial y_l},  { \frac{\partial}{\partial \tau_i}  }  )|_{y=x} d\text{Vol}_{X^\vee}
=-4\pi^2 g_{ls} \Gamma^s_{jk}, 
\end{equation}
where $\Gamma^s_{jk}$ is the Christoffel symbol on $X$ at $x\in X$.

Using (\ref{thirdorderTaylorasymptote3}), we can improve Lemma \ref{HermitianisometryLemma4} to the next order. In our coordinates, the point $x$ corresponds to the origin, and $|y-x|^2=g_{ij}(x)y_iy_j$. Then
\begin{equation}\label{thirdorderTaylorasymptote5}
\begin{split}
&\frac{1}{4\pi^2|x-y|^2}\int_{X^\vee}
\Tr  \Lap^{univ}_{X^\vee} Q_\tau(x,y) d\text{Vol}_{X^\vee} \\
& \sim I_{\mathcal{F}|_x}(1+ \frac{1}{2|x-y|^2}y_jy_ky_l \frac{\partial g_{lk} }{\partial y_j} ) +O(|y-x|^2).
\end{split}
\end{equation}
In the geodesic coordinate, the third order derivative term vanishes, and we obtain (\ref{thirdorderTaylorasymptote0}), as required.

Now we show the second part of the Lemma.
We can do a similar but easier computation as before by calculating $\nabla^{univ}_{\frac{\partial}{\partial \tau_i}  } Q_\tau$ up to the second order. After discarding some Laplacian and divergence terms in the $X^\vee$ integration, we get
\[
\begin{split}
&\int_{X^\vee} \Tr \{ (\nabla_k\nabla^{univ}_{\frac{\partial}{\partial \tau_i}  } Q_\tau(x,y)  ) \Omega( \frac{\partial }{\partial y_\mu} , \frac{\partial }{\partial \tau_i} )    \}|_{y=x} d\text{Vol}_{X^\vee} \\
=&\int_{X^\vee} \Tr \{ \Omega( \frac{\partial }{\partial x_k} , \frac{\partial }{\partial \tau_i} )  \Omega( \frac{\partial }{\partial x_\mu} , \frac{\partial }{\partial \tau_i} )    \}d\text{Vol}_{X^\vee}= -4\pi^2 g_{k\mu},
\end{split}
\]
and
\[
\begin{split}
&\int_{X^\vee} \Tr \{ (\nabla_j\nabla_k\nabla^{univ}_{\frac{\partial}{\partial \tau_i}  } Q_\tau(x,y)  ) \Omega( \frac{\partial }{\partial y_\mu} , \frac{\partial }{\partial \tau_i} )    \}|_{y=x} d\text{Vol}_{X^\vee} \\
=&\int_{X^\vee} \Tr \{ \nabla_j\Omega( \frac{\partial }{\partial y_k} , \frac{\partial }{\partial \tau_i} )  \Omega( \frac{\partial }{\partial y_\mu} , \frac{\partial }{\partial \tau_i} )    \}|_{y=x} d\text{Vol}_{X^\vee}= -4\pi^2 g_{s\mu}\Gamma^s_{jk},
\end{split}
\]
where the last equality uses (\ref{thirdorderTaylorasymptote4}). From this, the second derivative 
\[
\begin{split}
&\int_{X^\vee} \frac{\partial^2}{\partial y_j \partial y_k} \Tr \{ (\nabla^{univ}_{\frac{\partial}{\partial \tau_i}  } Q_\tau(x,y)  ) \Omega( \frac{\partial }{\partial y_\mu} , \frac{\partial }{\partial \tau_i} )    \}|_{y=x} d\text{Vol}_{X^\vee} \\
=& -4\pi^2 g_{s\mu}\Gamma^s_{jk}
+\int_{X^\vee} \Tr \{ \Omega( \frac{\partial }{\partial y_k} , \frac{\partial }{\partial \tau_i} ) \nabla_j \Omega( \frac{\partial }{\partial y_\mu} , \frac{\partial }{\partial \tau_i} )    \}|_{y=x} d\text{Vol}_{X^\vee} \\
& + \int_{X^\vee} \Tr \{ \Omega( \frac{\partial }{\partial y_j} , \frac{\partial }{\partial \tau_i} ) \nabla_k \Omega( \frac{\partial }{\partial y_\mu} , \frac{\partial }{\partial \tau_i} )    \}|_{y=x} d\text{Vol}_{X^\vee}  \\
=& -4\pi^2 g_{s\mu}\Gamma^s_{jk}-4\pi^2 g_{sk}\Gamma^s_{j\mu}-4\pi^2 g_{sj}\Gamma^s_{\mu k} \\
=& -2\pi^2\{ \frac{\partial g_{jk}}{\partial y_\mu} + \frac{\partial g_{\mu k}}{\partial y_j} + \frac{\partial  g_{j\mu }}{\partial y_k}          \}.
\end{split}
\]
Combining these, we get
\begin{equation}
\begin{split}
&\frac{1}{4\pi^2}\int_{X^\vee} \Tr \{ (\nabla^{univ}_{\frac{\partial}{\partial \tau_i}  } Q_\tau(x,y)  ) \Omega( \frac{\partial }{\partial y_\mu} , \frac{\partial }{\partial \tau_i} )    \} d\text{Vol}_{X^\vee}\\
&\sim I_{\mathcal{F}|_x}\{-g_{k\mu}(x) y_k- \frac{1}{4} (  \frac{\partial g_{jk}}{\partial y_\mu} + \frac{\partial g_{\mu k}}{\partial y_j} + \frac{\partial  g_{j\mu }}{\partial y_k}     )(x) y_jy_k \}+ O(|y-x|^3).
\end{split}
\end{equation}
In geodesic coordinates, we have that $g_{s\mu}(x)=\delta_{s\mu}$ and the Christoffel symbols vanish, so (\ref{thirdorderTaylorasymptote0'}) follows.
\end{proof}

\begin{rmk}
We have worked  in the geodesic coordinate on $X$, which has the advantage of simplifying the asymptotic formulae for $\Lap_{X^\vee} Q_\tau$ and $G_\tau$. It is an interesting exercise to show that even if we use more general coordinates and keep the Christoffel symbols, the asymptotic formulae still cancel out exactly in the proof of Proposition \ref{comparingconnectionmatrices}.
\end{rmk}

We now collect the main results of Section \ref{inverseNahmtransformcomparisonmap}, \ref{Injectiveisometry}, \ref{Comparingtheconnections} to achieve

\begin{thm}\label{Fourierinversiontheorem}
(Fourier inversion) Assume the setup of Section \ref{inverseNahmtransformcomparisonmap} so that the inverse Nahm transform $(\hat{\hat{\mathcal{F}}},\hat{\hat{\alpha}})$ is well defined. Then the canonical comparison map $\mathcal{F}\to \hat{\hat{\mathcal{F}}}$ is an isometric isomorphism of Hermitian vector bundles, which identifies the connection $\alpha$ with $\hat{\hat{\alpha}}$.
\end{thm}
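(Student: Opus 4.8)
The plan is to assemble the three pointwise results already established in Sections \ref{inverseNahmtransformcomparisonmap}, \ref{Injectiveisometry} and \ref{Comparingtheconnections}, and to check that they upgrade to statements about bundles with connection. First I would recall that the canonical comparison map is the complex linear map $\mathcal{F}|_x \to \hat{\hat{\mathcal{F}}}|_x$ obtained by composing the antilinear assignment $f\mapsto u_x(f)$ of (\ref{comparisonmap}) with the antilinear Hermitian identification $\mathcal{F}|_x \simeq \mathcal{F}^*|_x$. Theorem \ref{comparisonmapDiracequation1} guarantees that $u_x(f)$ satisfies the Dirac equation and hence genuinely lands in $\hat{\hat{\mathcal{F}}}|_x=\ker D^-_{\hat\alpha_x}$, so the map is well defined; and since $u_x(f)$ is built from the Green operator $G_\tau$, the universal curvature $\Omega$, Clifford contractions, and the local frame $\hat f^j$, all depending smoothly on the base point, it is a smooth bundle homomorphism $\mathcal{F}\to \hat{\hat{\mathcal{F}}}$.

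Next I would invoke the isometry result of Section \ref{Injectiveisometry}: the antilinear map $f\mapsto u_x(f)$ is a fibrewise isometry $\mathcal{F}^*|_x \to \hat{\hat{\mathcal{F}}}|_x$. Composing with the antilinear isometry $\mathcal{F}|_x\simeq \mathcal{F}^*|_x$ yields a complex linear, isometric embedding of Hermitian bundles. To promote this injective isometry to an isomorphism I would use the family Atiyah-Singer index theorem to identify $v(\hat{\hat{\mathcal{F}}})=FM^\vee(v(\hat{\mathcal{F}}))=FM^\vee FM(v(\mathcal{F}))=v(\mathcal{F})$ via Theorem \ref{FourierMukaitransformoncohomologymainperoperties}, so that $\hat{\hat{\mathcal{F}}}$ and $\mathcal{F}$ share the same rank. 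A fibrewise injective isometry between Hermitian bundles of equal rank over the connected base $X$ is automatically a bundle isomorphism, with smooth inverse.

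Finally, to see that the comparison map intertwines $\alpha$ with $\hat{\hat{\alpha}}$, I would argue at an arbitrary $x\in X$ using a local frame $f^1,\dots,f^r$ of $\mathcal{F}$ that is $\alpha$-parallel at $x$, \ie $\nabla^\alpha f^i|_x=0$. In the frame $\{u_x(f^i)\}$ of $\hat{\hat{\mathcal{F}}}$, the connection matrix of $\hat{\hat{\alpha}}$ at $x$ has entries $\langle u_x(f^i), \nabla_{\frac{\partial}{\partial x_\mu}} u_x(f^j)\rangle$, which vanish by Proposition \ref{comparingconnectionmatrices}; on the other side, the connection matrix of $\alpha$ in the frame $\{f^i\}$ also vanishes at $x$ by the parallel choice. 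Since the comparison map carries $f^i$ to $u_x(f^i)$ and $x$ is arbitrary, the pullback of $\hat{\hat{\alpha}}$ equals $\alpha$, completing the proof.

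The main obstacle is the connection-matching step, and precisely the bookkeeping required to make Proposition \ref{comparingconnectionmatrices} deliver exactly the connection matrix of the \emph{subbundle} connection $\hat{\hat{\alpha}}$. One must check that the orthogonal-projection definition of $\hat{\hat{\alpha}}$ (mirroring the definition of $\hat\alpha$ in Section \ref{Nahmtransform}) forces its connection matrix in the frame $\{u_x(f^i)\}$ to be literally the Hermitian pairings $\langle u_x(f^i), \nabla_x u_x(f^j)\rangle$ with $\nabla_x$ the ambient derivative on $\hat{\hat{\mathcal{H}}}$; the orthogonal projection drops out precisely because we pair against an element $u_x(f^i)$ of the subbundle. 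The genuinely delicate analytic content, already resolved inside Proposition \ref{comparingconnectionmatrices}, is the reduction via Green's formula and the higher-order asymptotics of $\Lap_{X^\vee} Q_\tau$ and $G_\tau$ to an exact cancellation of the Christoffel-symbol contributions in geodesic coordinates.
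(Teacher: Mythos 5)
Your proposal is correct and follows essentially the same route as the paper, whose proof of Theorem \ref{Fourierinversiontheorem} is precisely the assembly of Theorem \ref{comparisonmapDiracequation1} (well-definedness via the Dirac equation), the isometry-plus-rank argument of Section \ref{Injectiveisometry} (including the family index theorem and $FM^\vee\circ FM=\mathrm{id}$ step you cite), and Proposition \ref{comparingconnectionmatrices} for the connections. Your closing remark about why the orthogonal projection drops out when pairing against the subbundle frame $\{u_x(f^i)\}$ is exactly the bookkeeping the paper records when it states that the connection matrix of $\hat{\hat{\alpha}}$ is specified by $\langle u_x(f), \nabla_x u_x(f')\rangle$.
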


\begin{rmk}
The reader may compare this to Proposition 10 and Corollary 3 in \cite{Bartocci2}, which shows a derived category version of Fourier inversion theorem. Their version is technically much simpler, due to the power of the derived category machinery.
\end{rmk}

\begin{rmk}
Our proof depends on the algebraic theory in \cite{Huy1} only by the use of the Fourier-Mukai transform on cohomology.
\end{rmk}

\section*{Appendix: spinors in dimension 4}

We review the well known linear algebraic model of spinors in dimension 4, which serves to establish the conventions used in this paper.

Let $S$ be the spin representation of $so(4)$, which admits the chiral splitting into positive and negative spinors
$
S=S^+ \oplus S^-.
$ For a concrete model of S, we take a complex two dimensional Hermitian vector space $W$, with orthonormal basis $f_1,f_2 $, and let $S=\Lambda^*(W)$. Then $S^+=\C \oplus \Lambda^2(W)$ and $S^-=W$. The Clifford multiplication of an orthonormal basis $\frac{\partial}{\partial x_i}$ of $\R^4$ on $S$ is given by 
\[
\begin{cases}
c_1=f_1\wedge -f_1 \angle \\
c_2=if_1\wedge +if_1 \angle \\
c_3=f_2\wedge -f_2 \angle \\
c_4=if_2\wedge +if_2 \angle 
\end{cases}
\]
We see $c_i \in \Hom(S_+, S_-)\oplus \Hom(S_-, S_+)$, $c_i^\dagger=-c_i$, and the Clifford relations $c_ic_j+c_jc_i=-2\delta_{ij}$. The Clifford multiplication is multiplicative on norms:
\[
|c(v)\cdot \xi |=|v| |\xi|, \quad |c(v)\cdot \eta |=|v| |\eta|, \quad v\in \R^4, \xi\in S^+, \eta\in S^-.
\]
%The chiral splitting can also be described by introducing the operator $\gamma_0=c_1c_2c_3c_4$, which is $-1$ on $S^+$ and 1 on $S^-$. 
With this choice of convention, the Dirac operator $D=c_i \nabla_{\frac{\partial}{\partial x_i}}$ is self adjoint. It is conventional to think of $D$ as a pair of formally adjoint operators, mapping between the positive and negative spinor bundles. 

It is also convenient to extend the Clifford multiplication to elements of $\Lambda^2(\R^4)$. For example, for $F=\sum_{i<j}{ F_{ij} dx^i \wedge dx^j   }$, we make it act as $\sum_{i<j}{ F_{ij} c_ic_j }$. This Clifford multiplication and the wedge product can be compared by the formula
\[
c(v)c(w)=c( v\wedge w)- (v, w), \quad v, w\in \R^{4*}.
\]
Self dual forms act only on positive spin, and ASD forms only act on negative spin. This  applies to the standard triple of hyperk\"ahler 2-forms 
\[
\omega_1 =dx_1 dx_2+ dx_3 dx_4, \quad
\omega_2 =dx_1 dx_3 + dx_4 dx_2, \quad
\omega_3 =dx_1 dx_4+ dx_2 dx_3.
\]

 The group $Spin(4)$ acts on $\R^4, S^+, S^-$, such that the Clifford multiplication map $\R^4\times S^{+}\to S^{-}$ is equivariant. Up to $2:1$ cover, we can think of standard complex structures $I_1, I_2, I_3 \in SO(4)$ as elements of $Spin(4)$, and as such they act on the spinors. The action of $I_k$ on $S^-$ can be chosen to be trivial; this uniquely specifies the action $I_k^{S^+}$ on $S^+$, which must satisfy the compatibility with Clifford multiplication:
 \begin{equation}\label{complexstructureactiononpositivespin1}
 c(I_k v)\cdot{} I_k^{S^+} \xi= c(v) \cdot{}\xi, \quad \xi\in S^+, v\in \R^4,
 \end{equation}
 and
 \begin{equation}\label{complexstructureactiononpositivespin2}
 I_k^{S^+} (c(v)\cdot{} \eta)= c(I_k v)\cdot{} \eta, \quad \eta\in S^-, v\in \R^4.
 \end{equation}
 One can more concretely think of $I_k^{S^+}$ as given by the matrices in the basis $\{1, f_1\wedge f_2\}$,
 \[
I_1^{S^+} = \begin{bmatrix}
-i       & 0 \\
0       & i \\
\end{bmatrix}, 
I_2^{S^+} = \begin{bmatrix}
0      & -1 \\
1       & 0 \\
\end{bmatrix},
I_3^{S^+} = \begin{bmatrix}
0      & i \\
i       & 0 \\
\end{bmatrix}.
 \]
In particular they satisfy the quaternionic algebraic relations. On a hyperk\"ahler 4-fold, these actions globalise to actions on the positive spin bundle. The action on the negative spin bundle is trivial. Another way to understand these actions $I_i^{S^+}$, is that it equals a half of Clifford multiplication by the 2-form $\omega_i$.

There is an antilinear symmetry of $S$ as a Clifford module, coming from the $SU(2)$ structure on $S^+$ and $S^-$, given explicitly in our model by $\epsilon: S\rightarrow S$,
\[
%\begin{cases}
f_1 \mapsto f_2, f_2 \mapsto -f_1,
1 \mapsto -f_1\wedge f_2, f_1\wedge f_2 \mapsto 1.
%\end{cases}
\]
It satisfies $\epsilon^2=-1$, and commutes with Clifford multiplication and the $I^{S^+}_k$ action. On a hyperk\"ahler manifold, this antilinear symmetry can be globalised to a covariantly constant structure. Another viewpoint on $\epsilon$ is that together with the Hermitian structure it induces the complex symplectic form $\langle \langle \_, \_\rangle \rangle=\langle  \epsilon\_, \_ \rangle$ on $S^+$ or $S^-$.

\end{document}